\newtheorem{thm}{Theorem}[section]
\theoremstyle{definition}
\newtheorem{dfn}{Definition}[section]
\newtheorem{lem}{Lemma}[section]
\newtheorem{cor}{Corollary}[section]
\newtheorem{example}{Example}[section]
\newtheorem{rmrk}{Remark}[]
\DeclareMathOperator*{\argeff}{argeff}
\newcommand{\thickhline}{%
    \noalign {\ifnum 0=`}\fi \hrule height 1pt
    \futurelet \reserved@a \@xhline
}
\journal{Information Sciences}
\begin{document}

\begin{frontmatter}

\title{\textbf{Generalized-Hukuhara-Gradient Efficient-Direction Method to Solve Optimization Problems with Interval-valued Functions and its Application in Least Squares Problems}}
\author[iitbhu_math]{Debdas Ghosh\corref{cor1}}
\ead{debdas.mat@iitbhu.ac.in}
\author[iitbhu_math]{Amit Kumar Debnath}
\ead{amitkdebnath.rs.mat18@itbhu.ac.in}
\author[iitbhu_math]{Ram Surat Chauhan}
\ead{rschauhan.rs.mat16@itbhu.ac.in}
\author[t_mx]{Oscar Castillo}
\ead{ocastillo@tectijuana.mx}
\address[iitbhu_math]{Department of Mathematical Sciences,  Indian Institute of Technology (BHU) Varanasi \\ Uttar Pradesh--221005, India}
\address[t_mx]{Tijuana Institute of Technology, Tomas Aquino, Tijuana 22414, Mexico}
\cortext[cor1]{Corresponding author}

\begin{abstract}

\noindent \hl{This article proposes a general \textit{$gH$-gradient efficient-direction method} and a \textit{$\mathcal{W}$-$gH$-gradient efficient method} for the optimization problems with interval-valued functions}. The convergence analysis and the step-wise algorithms of both the methods are presented. It is observed that the $\mathcal{W}$-$gH$-gradient efficient method converges linearly for a strongly convex interval-valued objective function. To develop the proposed methods and to study their convergence, the idea of strong convexity and sequential criteria for $gH$-continuity of interval-valued function are illustrated. In the sequel, a new definition of $gH$-differentiability for interval-valued functions is also proposed. The new definition of $gH$-differentiability is described with the help of a newly defined concept of linear interval-valued function. It is noticed that the proposed $gH$-differentiability is superior to the existing ones. For a $gH$-differentiable interval-valued function, the relation of convexity with the $gH$-gradient of an interval-valued function and an optimality condition of an interval optimization problem are derived. For the derived optimality condition, a notion of \textit{efficient direction} for interval-valued functions is introduced. The idea of efficient direction is used to develop the proposed gradient methods. As an application of the proposed methods, the least square problem for interval-valued data by $\mathcal{W}$-$gH$-gradient efficient method is solved. The proposed method for least square problems is illustrated by a polynomial fitting and a logistic curve fitting. \\
\end{abstract}

\begin{keyword}
Interval-valued functions\sep Convexity\sep Strong convexity\sep $gH$-continuity\sep  $gH$-gradient\sep $gH$-differentiability\sep Efficient solution\sep Efficient direction\sep Least square problems.
\\ \vspace{0.7cm}
AMS Mathematics Subject Classification (2010): 90C30 $\cdot$ 65K05
\end{keyword}

\end{frontmatter}

\section{Introduction}

\hl{Each area of science, engineering, management, economics, and other practices, uses optimization techniques extensively}. Optimization techniques assist us to find the best under specified circumstances. The optimization problems with interval-valued functions (IVFs), known as interval optimization problems (IOPs), has become a significant research topic over the last two decades due to inherent imprecise and uncertain events in different real-world events.
In this paper, we attempt to derive a technique for IOPs to capture its solution set. The proposed method reduces to the steepest descent method for the optimization problems with real-valued functions.

\subsection{Literature Survey}

\hl{The ordering and subtraction of intervals has always been a issue in pursuit of an optimal solution for IOPs} \cite{ghosh2017spc}. In order to deal with interval-valued data, Moore introduced interval arithmetic \cite{moore1966,moore1987}. However, with the interval arithmetic in \cite{moore1966,moore1987}, one cannot find the additive inverse of a nondegenerate interval (whose lower and upper limits are different), i.e., for a nondegenerate interval $\textbf{A}$, there does not exist an interval $\textbf{B}$ such that $\textbf{A}\oplus\textbf{B}=\textbf{0}$. Due to this  reason, Wu \cite{wu2007,wu2008,wu2009} used a new concept of difference of intervals, known as Hukuhara difference \cite{hukuhara1967} for the difference of two nonempty, closed, bounded and convex subsets of a real linear space. In spite of the fact that the Hukuhara difference for intervals satisfies $\textbf{A}\ominus_{H}\textbf{A}=\textbf{0}$, $\textbf{A}\ominus_{H}\textbf{B}$ can be calculated only when the width of $\textbf{A}$ is greater than equal to that of $\textbf{B}$. {\color{red} In order to overcome this inefficiency of Hukuhara difference of intervals, the `nonstandard subtraction', introduced by Markov \cite{markov1979}, has been used and named as generalized Hukuhara difference ($gH$-difference) by Stefanini \cite{stefanini2009,stefanini2010}}. The generalized Hukuhara difference can be calculated for any pair of intervals and has the property that  $\textbf{A}\ominus_{gH}\textbf{A}=\textbf{0}$ \cite{stefanini2009}.
\\

In the ordering of intervals, as intervals are not linearly ordered in contrast to the real numbers, Ishibuchi and Tanaka \cite{ishibuchi1990} showed various partial ordering structures and solution concepts for IOPs. They suggested a method to solve a linear IOP by converting it to a bi-objective optimization problem, which is generalized by Chanas and Kuchuta \cite{chanas1996}. For nonlinear IOPs, Ghosh studied a Newton method \cite{ghosh2016newton} and a quasi-Newton method \cite{ghosh2017quasinewton}. Interestingly, many researchers proposed different {\color{red}types} of algorithms to solve various types of practical IOPs, for instance, see \cite{chen2004,chen2004freq,chinneck2000,csendes2001,limbourg2005,wolfe2000,wu2006}. Recently, Ghosh et al. \cite{ghosh2020ordering} introduced variable ordering relations of intervals and proposed an algorithm to obtain the solutions to IOPs. However, research into the applicability of conventional optimization techniques for IOPs is still not concentrated. More surprisingly, although the interplay between geometry and calculus yield optimization techniques, the calculus for IVFs is not rigorously developed until now. \\

In the year of 2007, with the help of a Hausdorff metric between any two intervals, Wu \cite{wu2007} illustrated the concept of continuity of an IVF. In the same article \cite{wu2007}, based on the Hukuhara difference, the concept of Hukuhara-differentiability ($H$-differentiability) of an IVF has been proposed. Accordingly, the KKT optimality conditions for IOPs have been given in \cite{wu2007}. Further, applying the concept of $H$-differentiability, Wu  \cite{wu2007,wu2008,wu2009} studied various duality theories of IOPs. Thereafter, showing the restrictiveness of $H$-differentiability, Chalco-Cano et al.  \cite{chalco2013calculus} developed the calculus of IVFs based on the modified concept of the $gH$-difference, known as generalized-Hukuhara differentiability ($gH$-differentiability). Chalco-Cano et al. \cite{chalco2013kkt} and Ghosh et al. \cite{ghosh2019extended} also derived the KKT conditions and duality theories in the view of $gH$-differentiability. \\

{\color{red} \hl{In the development of interval calculus, calculus for fuzzy-valued functions plays an important role because intervals are particular fuzzy numbers with a special membership function}. In connection with fuzzy calculus, Bede and Gal \cite{bede2005generalizations} introduced generalized (Hukuhara-based) differentiability;  the  paper  motivated  the  search  for  a $gH$-difference for intervals and fuzzy numbers (see \cite{stefanini2009,stefanini2009gh,stefanini2010}) and applications to fuzzy generalized Hukuhara differentiability (see \cite{bede2013generalized}). A recent contribution in this direction is the article by  Stefanini and Arana-Jim{\'e}nez \cite{stefanini2019} which contains definitions of total, directional and partial $gH$-derivatives for multi-variable interval- and fuzzy-valued functions.} \\ 

In the existing literature on interval calculus,  unlike the definition of differentiability of real-valued functions, none of the existing approaches used the concept of a \emph{linear IVF} to define the differentiability of an IVF.  Although similar to the definition of differentiability of real-valued functions, the authors of \cite{ghosh2016newton} and \cite{stefanini2019} introduced the new definitions of $gH$-differentiability for IVFs and studied the properties $gH$-differentiable IVFs. However, none of them also mentioned about the linear IVF and used the concept of linear IVF to define $gH$-differentiability for IVFs. \\

Since the last two decades, with the development of the calculus of IVFs and theories related to IOPs, many techniques, and their algorithmic implementations to obtain the efficient solutions of various types of practical IOPs have been appeared, for instance, see almost all the papers in the references.
However, the majority of the methods are provided from the perspective of conventional bi-objective optimization. Thus, to apply those techniques one has to explicitly express an IVF $\textbf{F}$ in terms of its real-valued lower $\underline{f}$ and upper $\overline{f}$ boundary functions, which is quite restrictive. For example, in a general least square problem for interval-valued data (see Section \ref{sa}), one cannot easily express the interval-valued error function in terms of its lower and upper boundary functions. The authors of  \cite{ghosh2016newton,ghosh2017spc} have studied a parametric form of IOPs and developed the theories and techniques to find efficient solutions to the IOPs with the objective functions that can be parametrically presented. However, for the parametric representation of an IVF one needs its explicit form which is often practically not possible, for instance, consider the function ${\color{red}\boldsymbol{\mathcal{E}} (\beta)}$ in  (\ref{eef}).

\subsection{Motivation and Contribution of the Paper}

\hl{The literature on IOPs shows that there is still no emphasis on the study of conventional optimization strategies for IOPs}. Surprisingly, the basic descent method is not yet developed for IOPs. Further, to derive a technique for IOPs which is similar to the standard descent method, we need to rigorously establish the notion of $gH$-differentiability concept for IVFs. More importantly, it must be kept in mind that the derived technique must be applicable to general IVFs regardless of whether or not
\begin{enumerate}[(i)]
\item the objective function can be expressed parametrically, or
\item the explicit form of the lower and upper function of the objective function can be found.
\end{enumerate}

\hl{After illustrating the concept of a linear IVF, this paper proposes a new definition of $gH$- differentiability}. It is shown that if an IVF is $gH$-differentiable at a point, its $gH$-gradient exists at that point. It is shown that the proposed definition of $gH$-differentiability is superior to the existing ones (see Remark \ref{nd1} for details). With the help of $gH$-gradient, a few characterization results for a $gH$-differentiable convex IVF are derived. Also, several results related to the $gH$-gradient of a strong convex $gH$-differentiable IVF are studied. \\

\hl{Further, with the help of the proposed $gH$-differentiability for IVFs, this article develops a gradient descent method for interval optimization, namely a general $gH$-gradient efficient-direction method for IOPs}. Similar to the steepest descent method, a method is also proposed, named $\mathcal{W}$-gradient efficient method, to obtain efficient solutions of IOPs. The main advantages of the proposed methods are that one needs neither the explicit forms of upper and lower functions of the objective function nor parametric forms of the corresponding IVFs of an IOP. It is shown that the $\mathcal{W}$-gradient efficient method for IOP converges linearly in the case of strong convexity of the interval-valued objective function. In order to develop these methods, the notion of efficient-direction for an IVF and its several characteristics are studied.   \\

\subsection{Delineation}
\hl{The presentation sequence of the proposed work is the following}. The next section covers some basic terminologies and notions of intervals analysis followed by the convexity and a few topics of differential calculus of IVFs. Also, the sequential criteria of $gH$-continuity of an IVF is discussed in Section \ref{sect2}. The concept of a linear IVF, a new concept of $gH$-differentiability of an IVF, and a few characterizations of a $gH$-differentiable convex IVF are given in Section \ref{sdivf}. The concept of efficient solutions and an optimality condition of an IOP are discussed in Section \ref{siopes}. In Section \ref{sgdmiop}, a general $gH$-gradient efficient-direction method for IOPs and a  $\mathcal{W}$-gradient efficient method for IOP are proposed. Their algorithmic implementations and the convergence analysis are also studied in Section \ref{sgdmiop}. The section \ref{sa} deals with the application of $\mathcal{W}$-gradient efficient method for IOPs in least square problems with interval data. Finally, in Section \ref{scf}, a few future directions of this study are given. \\

\section{\textbf{Preliminaries and Terminologies}}\label{sect2}

\noindent \hl{This section provides some basic terminologies and notions on intervals followed by the convexity and a few topics of differential calculus of IVFs.}

\subsection{Arithmetic of Intervals and their Dominance Relation}\label{ssai}

\noindent At first, this section describes the generalized concept of the difference of two intervals and the ordering concepts of intervals. \textcolor{red}{Along with these definitions, we use Moore's interval addition ($\oplus$) multiplication ($\odot$) and division ($\oslash$) \cite{moore1966,moore1987} throughout the paper.} \\

Let the set of real numbers be denoted by $\mathbb{R}$ and the set of all closed and bounded intervals be denoted by $I(\mathbb{R})$. Throughout the article, the elements of $I(\mathbb{R})$ are represented by bold capital letters ${\textbf A}, {\textbf B}, {\textbf C}, \ldots $. To represent an element $\textbf{A}\in I(\mathbb{R})$ in the interval form, the corresponding small letter is used in the following way: $\textbf{A} = [\underline{a}, \overline{a}.$ If $\underline{a}=\overline{a}]$, then $\textbf{A}$ is called a \emph{degenerate interval}.\\

It is to be mentioned that any singleton $\{p\}$ of $\mathbb{R}$ can be represented by an interval $\textbf{P}=[\underline{p},\;\overline{p}]$, where $\underline{p}=p=\overline{p}$. In particular,
\[
\textbf{0}=\{0\}=[0, 0]~~\text{and}~~\textbf{1}=\{1\}=[1, 1].
\]
\begin{rmrk}\label{ria1}
It is easy to check that the addition and multiplication of intervals are commutative, the addition of intervals is associative, and
\[
\textbf{A} \ominus \textbf{B}=\textbf{A} \oplus (-1)\odot\textbf{B}.
\]
\end{rmrk}
Since the property of subtraction of intervals cannot provide an additive inverse of a nondegenerate interval, in this article, we use the $gH$-difference of intervals, which is defined as follows.
\begin{dfn}
(\emph{$gH$-difference of intervals} \cite{stefanini2010}). Let
$\textbf{A}$ and $\textbf{B}$ be two elements of $I(\mathbb{R})$. The $gH$-difference
between $\textbf{A}$ and $\textbf{B}$, denoted $\textbf{A} \ominus_{gH} \textbf{B}
$, is defined by an interval $\textbf{C}$ such that
\[
\textbf{A} =  \textbf{B} \oplus  \textbf{C} ~\text{ or }~ \textbf{B} = \textbf{A}
\ominus \textbf{C}.
\]
It is to be noted that for $\textbf{A} = \left[\underline{a}, \overline{a}\right]$ and
$\textbf{B} = \left[\underline{b}, \overline{b}\right]$,
\[
\textbf{A} \ominus_{gH} \textbf{B} = \left[\min\{\underline{a}-\underline{b},
\overline{a} - \overline{b}\},  \max\{\underline{a}-\underline{b}, \overline{a} -
\overline{b}\}\right].
\]
Thus,
\[
\textbf{A} \ominus_{gH} \textbf{A} = \textbf{0} ~~\text{and}~~ \textbf{0} \ominus_{gH} \textbf{A}=(-1) \odot \textbf{A}.
\]
\end{dfn}
\begin{dfn}
(\emph{Algebraic operations on $I(\mathbb{R})^n$}). Let
$\bar{\textbf{A}} = \left(\textbf{A}_1, \textbf{A}_2, \ldots, \textbf{A}_n\right)^T$ and $\bar{\textbf{B}} =(\textbf{B}_1,$ $\textbf{B}_2, \ldots, \textbf{B}_n)^T$ be two elements of $I(\mathbb{R})^n$. An algebraic operation `$\star$' between $\bar{\textbf{A}}$ and $\bar{\textbf{B}}$, denoted $\bar{\textbf{A}} \star \bar{\textbf{B}}$, is defined by
\[
\bar{\textbf{A}} \star \bar{\textbf{B}}=\left(\textbf{A}_1\star \textbf{B}_1, \textbf{A}_2\star \textbf{B}_2, \ldots, \textbf{A}_n\star \textbf{B}_n\right)^T,
\]
where $\star\in\{\oplus,\ \ominus, \ \ominus_{gH}\}$.
\end{dfn}
\begin{dfn}
(\emph{Dominance relation of interval} \cite{wu2007}). For any two intervals $\textbf{A}$ and $\textbf{B}$ in $I(\mathbb{R})$,
\begin{enumerate}[(i)]
\item if $\underline{a}~\leq~ \underline{b}$ and $\overline{a}~\leq~\overline{b}$, then $\textbf{B}$ is said to be dominated by $\textbf{A}$ and denoted by $\textbf{A}~\preceq~ \textbf{B}$;
\item if either $\underline{a} ~\leq~ \underline{b}$  and $\overline{a} ~<~ \overline{b}$ or $\underline{a} ~<~ \underline{b}$  and $\overline{a} ~\leq~ \overline{b}$ hold, then $\textbf{B}$ is said to be strictly dominated by $\textbf{A}$ and denoted by $\textbf{A}~\prec~ \textbf{B}$;
\item if $\textbf{B}$ is not dominated by $\textbf{A}$, then $\textbf{A}~\npreceq~ \textbf{B}$ and if $\textbf{B}$ is not strictly dominated by $\textbf{A}$, then $\textbf{A}~\nprec~ \textbf{B}$;
\item if $\textbf{A}~\npreceq~ \textbf{B}$ and $\textbf{B}~\npreceq~ \textbf{A}$, then it will be said that none of $\textbf{A}$ and $\textbf{B}$ dominates the other, or $\textbf{A}$ and $\textbf{B}$ are not comparable.
\end{enumerate}
\end{dfn}
One can note that Wu \cite{wu2007,wu2009} used the term `superior than' to describe the dominance relation between two intervals. However, in this article, we use the term `dominated by' instead of `superior than'.
\begin{lem}\label{ldr1} For two elements $\textbf{A}$ and $\textbf{B}$ of $I(\mathbb{R})$,
\begin{enumerate}[(i)]
\item \label{part1} $\textbf{A}~\preceq~ \textbf{B} \Longleftrightarrow \textbf{A}\ominus_{gH}\textbf{B}~\preceq~  \textbf{0}$~\text{ and }
\item \label{part2} $\textbf{A}~\nprec~ \textbf{B} \Longleftrightarrow \textbf{A}\ominus_{gH}\textbf{B}~\nprec~ \textbf{0}.$
\end{enumerate}
\end{lem}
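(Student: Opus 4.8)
The plan is to reduce both equivalences to elementary inequalities on the endpoints of the intervals. Throughout, write $\textbf{A} = [\underline{a}, \overline{a}]$, $\textbf{B} = [\underline{b}, \overline{b}]$, and set $\textbf{C} = \textbf{A} \ominus_{gH} \textbf{B} = [\underline{c}, \overline{c}]$, where by the formula for the $gH$-difference
\[
\underline{c} = \min\{\underline{a} - \underline{b},\ \overline{a} - \overline{b}\} \quad \text{and} \quad \overline{c} = \max\{\underline{a} - \underline{b},\ \overline{a} - \overline{b}\}.
\]
The whole argument rests on translating the symbolic relations $\preceq \textbf{0}$ and $\prec \textbf{0}$ into conditions on $\underline{c}$ and $\overline{c}$ and then reading off the $\min$ and $\max$.

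For part (\ref{part1}), I would first note that, by the dominance definition, $\textbf{C} \preceq \textbf{0}$ means exactly $\underline{c} \leq 0$ and $\overline{c} \leq 0$. Since $\underline{c} \leq \overline{c}$ always holds, this is equivalent to the single inequality $\overline{c} \leq 0$, i.e.\ $\max\{\underline{a} - \underline{b},\ \overline{a} - \overline{b}\} \leq 0$. The latter holds precisely when both $\underline{a} - \underline{b} \leq 0$ and $\overline{a} - \overline{b} \leq 0$, which is the definition of $\textbf{A} \preceq \textbf{B}$. This chain of equivalences settles (\ref{part1}) in a line or two.

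For part (\ref{part2}), the cleanest route is to use that $\nprec$ is by definition the negation of $\prec$; hence the stated equivalence $\textbf{A} \nprec \textbf{B} \Longleftrightarrow \textbf{C} \nprec \textbf{0}$ is logically the same as $\textbf{A} \prec \textbf{B} \Longleftrightarrow \textbf{C} \prec \textbf{0}$, and I would prove the latter. First I would characterize $\textbf{C} \prec \textbf{0}$: unwinding the definition of $\prec$ and using $\underline{c} \leq \overline{c}$ shows that $\textbf{C} \prec \textbf{0}$ is equivalent to $\overline{c} \leq 0$ together with $\underline{c} < 0$. In endpoint terms this says $\underline{a} \leq \underline{b}$ and $\overline{a} \leq \overline{b}$ with at least one of these inequalities strict. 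A short boolean check then shows this coincides with the condition $(\underline{a} \leq \underline{b} \wedge \overline{a} < \overline{b}) \vee (\underline{a} < \underline{b} \wedge \overline{a} \leq \overline{b})$ defining $\textbf{A} \prec \textbf{B}$.

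The only place demanding care is the bookkeeping of strict versus non-strict inequalities in part (\ref{part2}): one must verify both that $\overline{c} < 0$ forces $\underline{c} < 0$ (so the two clauses in the definition of $\textbf{C} \prec \textbf{0}$ collapse to ``$\overline{c} \leq 0$ and $\underline{c} < 0$'') and that, after substituting the $\min$ and $\max$, this matches the disjunctive definition of $\textbf{A} \prec \textbf{B}$. This is the main, though still routine, obstacle; everything else is a direct substitution.
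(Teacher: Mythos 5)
Your proof is correct. Part (\ref{part1}) is the same endpoint computation as the paper's: $\textbf{A} \preceq \textbf{B}$ iff both $\underline{a}-\underline{b} \leq 0$ and $\overline{a}-\overline{b} \leq 0$ iff $\left[\min\{\underline{a}-\underline{b}, \overline{a}-\overline{b}\}, \max\{\underline{a}-\underline{b}, \overline{a}-\overline{b}\}\right] \preceq \textbf{0}$. In part (\ref{part2}) you take a genuinely different (and slightly cleaner) route: since $\nprec$ is by definition the negation of $\prec$, you prove the logically equivalent statement $\textbf{A} \prec \textbf{B} \Longleftrightarrow \textbf{A}\ominus_{gH}\textbf{B} \prec \textbf{0}$, collapsing the two clauses of $\prec \textbf{0}$ into ``$\overline{c} \leq 0$ and $\underline{c} < 0$'' and matching this against the disjunctive definition of $\textbf{A} \prec \textbf{B}$; your bookkeeping here checks out. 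The paper instead characterizes $\nprec$ directly, and to do so it must split off the case $\textbf{A} = \textbf{B}$: only when $\textbf{A} \neq \textbf{B}$ is $\textbf{A} \nprec \textbf{B}$ equivalent to ``at least one of $\underline{a} > \underline{b}$, $\overline{a} > \overline{b}$ holds,'' i.e., to $\max\{\underline{a}-\underline{b}, \overline{a}-\overline{b}\} > 0$; when $\textbf{A} = \textbf{B}$ both sides of the lemma hold trivially because $\textbf{A}\ominus_{gH}\textbf{B} = \textbf{0}$. Your negation-of-both-sides reduction removes the need for that case distinction, at the price of the strict/non-strict care you flag, while the paper's version yields a direct max-positivity test for $\nprec$ in the nondegenerate case. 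Both arguments are complete and rest on the same formula for the $gH$-difference.
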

\begin{proof}
See \ref{apdr}.
\end{proof}
\begin{lem}\label{ldr2} For an $\textbf{A} \in I(\mathbb{R})$,
\begin{enumerate}[(i)]
\item  $\textbf{0}~\preceq~ \textbf{A} \Longleftrightarrow (-1)\odot\textbf{A}~\preceq~ \textbf{0}$ and
\item  $\textbf{0}~\nprec~ \textbf{A} \Longleftrightarrow (-1)\odot\textbf{A}~\nprec~ \textbf{0}$.
\end{enumerate}
\end{lem}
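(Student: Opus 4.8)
The plan is to recognize this statement as essentially a specialization of Lemma~\ref{ldr1}, once the identity $\textbf{0}\ominus_{gH}\textbf{A}=(-1)\odot\textbf{A}$ (already recorded in the definition of the $gH$-difference) is brought in. First I would set $\textbf{A}\leftarrow\textbf{0}$ and $\textbf{B}\leftarrow\textbf{A}$ in both parts of Lemma~\ref{ldr1}. Part~(\ref{part1}) of that lemma then reads $\textbf{0}\preceq\textbf{A}\Longleftrightarrow\textbf{0}\ominus_{gH}\textbf{A}\preceq\textbf{0}$, and part~(\ref{part2}) reads $\textbf{0}\nprec\textbf{A}\Longleftrightarrow\textbf{0}\ominus_{gH}\textbf{A}\nprec\textbf{0}$. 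Substituting $\textbf{0}\ominus_{gH}\textbf{A}=(-1)\odot\textbf{A}$ into both equivalences immediately yields the two claimed statements, so the lemma is really a corollary of Lemma~\ref{ldr1}.

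As a self-contained alternative that does not appeal to Lemma~\ref{ldr1}, I would argue directly from the componentwise definition of the dominance relation. Writing $\textbf{A}=[\underline{a},\overline{a}]$ and using Moore's scalar multiplication, I would first compute $(-1)\odot\textbf{A}=[-\overline{a},-\underline{a}]$. For part~(i), $\textbf{0}\preceq\textbf{A}$ is by definition $0\leq\underline{a}$ and $0\leq\overline{a}$, while $(-1)\odot\textbf{A}\preceq\textbf{0}$ is $-\overline{a}\leq 0$ and $-\underline{a}\leq 0$; these two pairs of scalar inequalities are visibly equivalent. For part~(ii), I would instead establish the affirmative form $\textbf{0}\prec\textbf{A}\Longleftrightarrow(-1)\odot\textbf{A}\prec\textbf{0}$ and then negate both sides: expanding each side through the definition of $\prec$ shows that both reduce to the same disjunction of strict/nonstrict inequalities on $\underline{a}$ and $\overline{a}$, whence the $\nprec$ equivalence follows by contraposition.

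The only point requiring care is the orientation reversal caused by multiplication by $-1$, namely that the lower endpoint of $(-1)\odot\textbf{A}$ is $-\overline{a}$ and its upper endpoint is $-\underline{a}$; once the endpoints are correctly swapped, every remaining step is a routine comparison of real inequalities, so there is no substantial obstacle. Because the result is an immediate consequence of Lemma~\ref{ldr1} combined with $\textbf{0}\ominus_{gH}\textbf{A}=(-1)\odot\textbf{A}$, I would present the short first argument as the main proof and keep the direct endpoint computation only as a verification.
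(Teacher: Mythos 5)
Your main argument is exactly the paper's proof: the authors likewise substitute $\textbf{A}\leftarrow\textbf{0}$, $\textbf{B}\leftarrow\textbf{A}$ in Lemma~\ref{ldr1} and invoke the identity $\textbf{0}\ominus_{gH}\textbf{A}=(-1)\odot\textbf{A}$. Your supplementary endpoint computation is correct but redundant, so the proposal matches the paper's approach and is sound.
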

\begin{proof}
As $\textbf{0}\ominus_{gH}\textbf{A}=(-1)\odot\textbf{A}$, replacing $\textbf{A}$ by $\textbf{0}$ and $\textbf{B}$ by $\textbf{A}$ in Lemma \ref{ldr1}, we get the required results.
\end{proof}
\begin{dfn}
(\emph{Norm on $I(\mathbb{R})$} \cite{moore1966}). For an $\textbf{A} = \left[\underline{a}, \bar{a}\right]$ in $ I(\mathbb{R})$, the function ${\lVert . \rVert}_{I(\mathbb{R})} : I(\mathbb{R}) \rightarrow \mathbb{R}^+$, defined by
\[
{\lVert \textbf{A} \rVert}_{I(\mathbb{R})} = \max \{|\underline{a}|, |\bar{a}|\},
\]
is a norm on $I(\mathbb{R})$.
\end{dfn}
\begin{dfn}
(\emph{Norm on $I(\mathbb{R})^n$} \cite{moore1987}). For an $\bar{\textbf{A}} = \left(\textbf{A}_1, \textbf{A}_2, \ldots, \textbf{A}_n\right)^T$ in $ I(\mathbb{R})^n$, the function ${\lVert \cdot \rVert}_{I(\mathbb{R})^n} : I(\mathbb{R}) \rightarrow \mathbb{R}^+$, defined by
\[
{\lVert \bar{\textbf{A}} \rVert}_{I(\mathbb{R})^n} = \sum_{i=1}^n {\lVert \textbf{A}_i \rVert}_{I(\mathbb{R})}
\]
is a norm on $I(\mathbb{R})^n$.
\end{dfn}

In this article, although we use the notions `${\lVert \cdot \rVert}_{I(\mathbb{R})}$' and `${\lVert \cdot \rVert}_{I(\mathbb{R})^n}$' to denote the norms on $I(\mathbb{R})$ and $I(\mathbb{R})^n$, respectively, we simply use the notion `${\lVert \cdot \rVert}$' to denote the usual Euclidean norm on $\mathbb{R}^n$.


\subsection{Convexity and Basic Differential Calculus of Interval-valued Functions}
\noindent Let $\mathcal{X}$ be a nonempty subset of $\mathbb{R}^n$. An IVF $\textbf{F}:\mathcal{X} \rightarrow I(\mathbb{R})$, for each argument point $x \in \mathcal{X}$, is presented by the ontic (see \cite{couso2014statistical}) way: 
\[
\textbf{F}(x)=\left[\underline{f}(x),\
\overline{f}(x)\right],
\]
where $\underline{f}$ and $\overline{f}$ are real-valued functions on $\mathcal{X}$. The functions $\underline{f}$ and $\overline{f}$ are called the lower and the upper functions of $\textbf{F}$, respectively. 
\begin{dfn}
(\emph{Convex IVF} \cite{wu2007}). Let $\mathcal{X} \subseteq
\mathbb{R}^n$ be a convex set. An IVF
$\textbf{F}: \mathcal{X} \rightarrow I(\mathbb{R})$
is said to be a convex function if for any two vectors $x_1$ and $x_2$ in
$\mathcal{X}$,
\[
\textbf{F}(\lambda_1 x_1+\lambda_2 x_2)~\preceq~ \lambda_1\odot\textbf{F}(x_1)\oplus\lambda_2\odot\textbf{F}(x_2)
\]
for all $\lambda_1,~\lambda_2\in[0,\ 1]$ with $\lambda_1+\lambda_2=1$.
\end{dfn}
\begin{rmrk}\label{rc1}(See \cite{wu2007}).
$\textbf{F}$ is convex if and only if $\underline{f}$
and $\overline{f}$ are convex.
\end{rmrk}
\begin{dfn}\label{strongly_convex_IVF}	(\emph{Strongly convex IVF}). Let $\mathcal{X}$ be a nonempty convex subset of $\mathbb{R}^n$. An IVF $\textbf{F}:\mathcal{X} \rightarrow I(\mathbb{R})$ is said to be strongly convex on $\mathcal{X}$ if there exists a convex IVF $\textbf{G}:\mathcal{X} \rightarrow I(\mathbb{R})$ and a $\sigma>0$ such that
\[
\textbf{F}(x) = \textbf{G}(x)\oplus \frac{1}{2}\rVert x \rVert^2\odot [\sigma, \sigma] \text{ for all } x \in \mathcal{X}.
\]
\end{dfn}
\begin{rmrk}\label{rscv1}
It is to be observed that \[
\textbf{F}(x) = \textbf{G}(x)\oplus \frac{1}{2}\rVert x \rVert^2\odot [\sigma, \sigma]
\]
implies
\begingroup\allowdisplaybreaks\begin{align*}
& \left[\underline{g}(x), \overline{g}(x)\right]\oplus \frac{1}{2}\rVert x \rVert^2\odot [\sigma, \sigma]=\left[\underline{f}(x), \overline{f}(x)\right]\\
\text{or},&\left[\underline{g}(x)+\frac{\sigma}{2}\rVert x \rVert^2, \overline{g}(x)+\frac{\sigma}{2}\rVert x \rVert^2 \right]=\left[\underline{f}(x), \overline{f}(x)\right]\\
\text{or},&~\underline{g}(x)=\underline{f}(x)-\frac{\sigma}{2}\rVert x \rVert^2~\text{and}~\overline{g}(x)=\overline{f}(x)-\frac{\sigma}{2}\rVert x \rVert^2.
\end{align*}\endgroup
Therefore,
\begingroup\allowdisplaybreaks\begin{align*}
\textbf{F}~\text{is strongly convex}&\Longleftrightarrow\textbf{G}~\text{is convex}\\
&\Longleftrightarrow\underline{g}~\text{and}~\overline{g}~\text{are convex, by Remark \ref{rc1}}\\
&\Longleftrightarrow\underline{f}~\text{and}~\overline{f}~\text{are strongly convex}.
\end{align*}\endgroup
\end{rmrk}
\begin{dfn}
(\emph{$gH$-continuity} \cite{ghosh2016newton}).
 Let $\textbf{F}$ be an IVF on a nonempty subset $\mathcal{X}$ of $\mathbb{R}^n$. Let $\bar{x}$ be an interior point of $\mathcal{X}$
 and $d\in\mathbb{R}^n$ be such that $\bar{x}+d\in\mathcal{X}$. The function
 $\textbf{F}$ is said to be continuous at $\bar{x}$ if
\[
\lim_{\lVert d \rVert\rightarrow
0}\left(\textbf{F}(\bar{x}+d)\ominus_{gH}\textbf{F}(\bar{x})\right)=\textbf{0}.
\]
\end{dfn}
\begin{lem}\label{lc1}
An IVF $\textbf{F}$ on a nonempty subset $\mathcal{X}$ of $\mathbb{R}^n$ is $gH$-continuous if and only if
$\underline{f}$ and $\overline{f}$ are continuous.
\end{lem}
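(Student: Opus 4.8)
The plan is to reduce the interval limit in the definition of $gH$-continuity to two scalar limits by using the explicit endpoint formula for the $gH$-difference. Fix an interior point $\bar{x}$ of $\mathcal{X}$ and set, for admissible $d$,
\[
p(d) = \underline{f}(\bar{x}+d) - \underline{f}(\bar{x}) \quad \text{and} \quad q(d) = \overline{f}(\bar{x}+d) - \overline{f}(\bar{x}).
\]
Applying the formula $\textbf{A}\ominus_{gH}\textbf{B} = [\min\{\underline{a}-\underline{b},\overline{a}-\overline{b}\},\max\{\underline{a}-\underline{b},\overline{a}-\overline{b}\}]$ to $\textbf{A}=\textbf{F}(\bar{x}+d)$ and $\textbf{B}=\textbf{F}(\bar{x})$ gives
\[
\textbf{F}(\bar{x}+d)\ominus_{gH}\textbf{F}(\bar{x}) = \left[\min\{p(d),q(d)\},\ \max\{p(d),q(d)\}\right].
\]
So the defining condition $\lim_{\lVert d\rVert\to 0}\left(\textbf{F}(\bar{x}+d)\ominus_{gH}\textbf{F}(\bar{x})\right)=\textbf{0}$, read through the norm ${\lVert\cdot\rVert}_{I(\mathbb{R})}$, is exactly that both endpoints $\min\{p(d),q(d)\}$ and $\max\{p(d),q(d)\}$ tend to $0$ as $\lVert d\rVert\to 0$.

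Next I would establish the elementary equivalence that $\min\{p,q\}\to 0$ and $\max\{p,q\}\to 0$ together if and only if $p\to 0$ and $q\to 0$ together. The forward direction is a squeeze argument, using the two-sided bound $\min\{p(d),q(d)\}\le p(d)\le \max\{p(d),q(d)\}$ together with the analogous bound for $q(d)$; the reverse direction is immediate from the continuity of the scalar $\min$ and $\max$ operations. Combining this with the previous step, $gH$-continuity of $\textbf{F}$ at $\bar{x}$ is equivalent to $p(d)\to 0$ and $q(d)\to 0$ as $\lVert d\rVert\to 0$.

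Finally I would recognize that $p(d)\to 0$ as $\lVert d\rVert\to 0$ is precisely the statement that $\underline{f}$ is continuous at $\bar{x}$, and likewise $q(d)\to 0$ expresses the continuity of $\overline{f}$ at $\bar{x}$. Since $\bar{x}$ was an arbitrary interior point, the pointwise equivalence yields the claimed equivalence on all of $\mathcal{X}$. I expect no genuine obstacle here; the only point requiring a little care is justifying that the vanishing of the two interval endpoints forces both scalar differences $p(d)$ and $q(d)$ to vanish. This is handled cleanly by the squeeze bound above, which avoids having to treat separately the two regimes $\underline{f}(\bar{x}+d)-\underline{f}(\bar{x})\le \overline{f}(\bar{x}+d)-\overline{f}(\bar{x})$ and its reverse that determine which difference is the $\min$ and which is the $\max$.
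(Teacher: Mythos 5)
Your proof is correct and takes essentially the same route as the paper's: both reduce the interval limit in the definition of $gH$-continuity, via the min/max endpoint formula for the $gH$-difference, to the two scalar limits $\underline{f}(\bar{x}+d)-\underline{f}(\bar{x})\to 0$ and $\overline{f}(\bar{x}+d)-\overline{f}(\bar{x})\to 0$. The only differences are stylistic: your squeeze argument spells out the step the paper compresses into ``by the definition of $gH$-difference,'' and you prove the converse directly where the paper argues by contradiction.
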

\begin{proof}
See \ref{aplc1}.
\end{proof}
\begin{lem}(\emph{Sequential criteria of $gH$-continuity}).\label{lsc}
An IVF $\textbf{F}$ on a nonempty subset $\mathcal{X}$ of $\mathbb{R}^n$ is $gH$-continuous at a point $\bar{x}\in\mathcal{X}$ if and only if for every sequence $\{x_n\}$ in $\mathcal{X}$ converging to $\bar{x}$, the sequence $\{\textbf{F}(x_n)\}$ converges to $\textbf{F}(\bar{x})$.
\end{lem}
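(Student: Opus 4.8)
The plan is to reduce the statement to the classical sequential criterion for continuity of the real-valued lower and upper functions $\underline{f}$ and $\overline{f}$, exploiting Lemma \ref{lc1} together with the explicit coordinate formula for $\ominus_{gH}$ and the definition of the norm on $I(\mathbb{R})$. Here convergence of a sequence of intervals $\{\textbf{F}(x_n)\}$ to $\textbf{F}(\bar{x})$ is understood, consistently with the definition of $gH$-continuity, as $\textbf{F}(x_n)\ominus_{gH}\textbf{F}(\bar{x})\to\textbf{0}$, equivalently ${\lVert\textbf{F}(x_n)\ominus_{gH}\textbf{F}(\bar{x})\rVert}_{I(\mathbb{R})}\to 0$.

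First I would record the key norm identity. For a sequence $\{x_n\}$ in $\mathcal{X}$, using the component formula for $\ominus_{gH}$ and the definition ${\lVert[\underline{a},\overline{a}]\rVert}_{I(\mathbb{R})}=\max\{|\underline{a}|,|\overline{a}|\}$, one checks that
\[
{\lVert\textbf{F}(x_n)\ominus_{gH}\textbf{F}(\bar{x})\rVert}_{I(\mathbb{R})}=\max\left\{|\underline{f}(x_n)-\underline{f}(\bar{x})|,\ |\overline{f}(x_n)-\overline{f}(\bar{x})|\right\},
\]
because taking moduli collapses the $\{\min,\max\}$ pair of the two coordinate differences back into the pair of their absolute values. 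Consequently, $\{\textbf{F}(x_n)\}$ converges to $\textbf{F}(\bar{x})$ if and only if simultaneously $\underline{f}(x_n)\to\underline{f}(\bar{x})$ and $\overline{f}(x_n)\to\overline{f}(\bar{x})$ in $\mathbb{R}$.

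Next, for the forward implication, I would assume $\textbf{F}$ is $gH$-continuous at $\bar{x}$; by the pointwise form of Lemma \ref{lc1}, $\underline{f}$ and $\overline{f}$ are then continuous at $\bar{x}$. Given any $\{x_n\}$ in $\mathcal{X}$ with $x_n\to\bar{x}$, the classical sequential criterion for real-valued functions yields $\underline{f}(x_n)\to\underline{f}(\bar{x})$ and $\overline{f}(x_n)\to\overline{f}(\bar{x})$, whence the norm identity gives $\textbf{F}(x_n)\to\textbf{F}(\bar{x})$. For the converse, I would assume the sequential condition: the norm identity shows that every sequence $x_n\to\bar{x}$ forces $\underline{f}(x_n)\to\underline{f}(\bar{x})$ and $\overline{f}(x_n)\to\overline{f}(\bar{x})$, so by the classical real sequential criterion $\underline{f}$ and $\overline{f}$ are continuous at $\bar{x}$, and therefore $\textbf{F}$ is $gH$-continuous at $\bar{x}$ by Lemma \ref{lc1}.

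The main obstacle is not conceptual but a matter of careful bookkeeping: one must verify the norm identity rigorously (noting that the absolute value erases the $\min/\max$ ordering introduced by the $gH$-difference) and ensure that Lemma \ref{lc1} is invoked pointwise at $\bar{x}$ rather than globally on $\mathcal{X}$. Once the identity is established, the result is simply the real-variable sequential criterion applied to the two scalar functions $\underline{f}$ and $\overline{f}$.
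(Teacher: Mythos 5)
Your proposal is correct and follows essentially the same route as the paper's proof: reduce the claim to Lemma \ref{lc1} together with the classical sequential criterion for the real-valued functions $\underline{f}$ and $\overline{f}$. The paper states this reduction in a single sentence, whereas you additionally verify the norm identity $\lVert\textbf{F}(x_n)\ominus_{gH}\textbf{F}(\bar{x})\rVert_{I(\mathbb{R})}=\max\{|\underline{f}(x_n)-\underline{f}(\bar{x})|,|\overline{f}(x_n)-\overline{f}(\bar{x})|\}$ that makes the equivalence between interval convergence and componentwise convergence explicit---a detail the paper leaves implicit.
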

\begin{proof}
See \ref{aplsc}.
\end{proof}
\begin{dfn}(\emph{$gH$-Lipschitz continuous IVF} \cite{ghosh2019derivative}). Let $\mathcal{X}\subseteq \mathbb{R}^n$. An IVF  $\textbf{F}: \mathcal{X} \rightarrow I(\mathbb{R})$ is said to be $gH$-Lipschitz continuous on $\mathcal{X}$ if there exists $L~>~0 $ such that
\[ {\lVert \textbf{F}(x) \ominus_{gH} \textbf{F}(y) \rVert }_{I(\mathbb{R})} \le L {\lVert x-y \rVert} ~~\text{for all}~~x,y \in \mathcal{X}. \]
The constant $L$ is called a Lipschitz constant.
\end{dfn}
\begin{dfn}(\emph{$gH$-derivative} \cite{stefanini2009gh}).
Let $\mathcal{X}\subseteq \mathbb{R}$. The $gH$-derivative of an IVF $\textbf{F}:\mathcal{X} \rightarrow I(\mathbb{R})$ at $\bar{x}\in \mathcal{X}$ is defined by
\[
\textbf{F}'(\bar{x})=\displaystyle\lim_{d\rightarrow 0} \frac{\textbf{F}(\bar{x}+d) \ominus_{gH} \textbf{F}(\bar{x})}{d}, ~\text{provided the limit exists.}
\]
\end{dfn}
\begin{rmrk} (See \label{rd1} \cite{chalco2011}).
Let $\mathcal{X}$ be a nonempty subset of $\mathbb{R}$. The $gH$-derivative of an IVF $\textbf{F}:\mathcal{X} \rightarrow I(\mathbb{R})$ at $\bar{x}\in \mathcal{X}$ exists if the derivatives of $\underline{f}$ and $\overline{f}$ at $\bar{x}$ exist and
\[
\textbf{F}'(\bar{x})=\left[\min\left\{\underline{f}'(\bar{x}), \overline{f}(\bar{x})\right\},   \max\left\{\underline{f}'(\bar{x}), \overline{f}(\bar{x})\right\}\right].
\]
However, the converse is not true.
\end{rmrk}
\begin{dfn}\label{pdgh}(\emph{Partial $gH$-derivative } \cite{chalco2013kkt}).
Let $\textbf{F}:\mathcal{X} \rightarrow I(\mathbb{R})$ be an IVF, where $\mathcal{X}$ is a nonempty subset of $\mathbb{R}^n$. Let $\textbf{G}_i$ be defined by
\[
\textbf{G}_i (x_i) = \textbf{F} (\bar{x}_1, \bar{x}_2, \ldots, \bar{x}_{i-1}, x_i, \bar{x}_{i+1}, \ldots, \bar{x}_n),
\]
where $\bar{x} = (\bar{x}_1,\, \bar{x}_2,\, \ldots,\, \bar{x}_n)^T\in\mathcal{X}$. If the $gH$-derivative of $\textbf{G}_i$ exists at $\bar{x}_i$, then the $i$-th  partial $gH$-derivative  of $\textbf{F}$ at $\bar{x}$, denoted $D_i \textbf{F}(\bar{x})$, is defined by
\[
D_i \textbf{F}(\bar{x})=\textbf{G}'_i (\bar{x}_i)~~\text{for all}~~i = 1,\, 2,\, \ldots,\, n.
\]
\end{dfn}
\begin{dfn} (\emph{$gH$-gradient} \cite{chalco2013kkt}).
Let $\mathcal{X}$ be a nonempty subset of $\mathbb{R}^n$. The $gH$-gradient of an IVF $\textbf{F}:\mathcal{X} \rightarrow I(\mathbb{R})$ at a point $\bar{x} \in \mathcal{X}$, denoted $\nabla \textbf{F} (\bar{x})$, is defined by
\[
\nabla \textbf{F} (\bar{x})=\left(   D_1\textbf{F}(\bar{x}),\,
         D_2\textbf{F}(\bar{x}),\, \ldots,\,
         D_n\textbf{F}(\bar{x})
\right)^T.
\]
\end{dfn}
It is to be mentioned that the authors of \cite{chalco2013kkt} used the notations `$\left(\frac{\partial \textbf{F}}{\partial x_i}\right)_g(\bar{x})$' and `$\nabla_g \textbf{F}(\bar{x})$' for $i$-th  partial $gH$-derivative  and $gH$-gradient of $\textbf{F}$ at $\bar{x}$, respectively. However, throughout the article we simply use the notations `$D_i\textbf{F}(\bar{x})$' and `$\nabla \textbf{F}(\bar{x})$' for $i$-th  partial $gH$-derivative  and $gH$-gradient of $\textbf{F}$ at $\bar{x}$, respectively.
\begin{dfn}(\emph{$gH$-Lipschitz gradient} \cite{ghosh2019derivative}). An IVF  $\textbf{F}: \mathcal{X} \rightarrow I(\mathbb{R})$ is said to have $gH$-Lipschitz gradient on $\mathcal{X} \subseteq \mathbb{R}^n$ if there exists $M~>~0 $ such that
\[ {\lVert \nabla\textbf{F}(x) \ominus_{gH} \nabla\textbf{F}(y) \rVert }_{I(\mathbb{R})^n} \le M {\lVert x-y \rVert} ~~\text{for all}~~x,~y \in \mathcal{X}. \]
\end{dfn}
Until now, the concepts of $gH$-continuity, $gH$-derivative, partial $gH$-derivative , $gH$-gradient for IVF have been discussed. In the next section, we illustrate the idea of differentiability for IVFs. This idea differentiability for IVFs is used in the rest of the paper to develop the gradient descent method of IOP.
%
%
%


\section{Differentiability of Interval-valued Functions}\label{sdivf}

\noindent \hl{Behind the concept of differentiability of a function the concept of linearity plays an important role}. Thus, before exploring the concept of differentiability of an IVF, we discuss the concept of a linear IVF.
{\color{red}
\begin{dfn}(\emph{Linear IVF}). \label{dlivf}
Let $\mathcal{X}$ be a linear subspace of $\mathbb{R}^n$. A function $\textbf{F}: \mathcal{X} \rightarrow I(\mathbb{R})$ is said to be linear if
\[\textbf{F}(x)=\bigoplus_{i=1}^n x_i\odot\textbf{F}(e_i)~ \text{for all}~x=(x_1, x_2,\ldots, x_n)^T\in \mathcal{X},\]
where $e_i$ is the $i$-th standard basis vector of $\mathbb{R}^n$, $i = 1, 2, \ldots, n$ and
`$\bigoplus_{i=1}^n$' denotes successive addition of $n$ number of intervals.
\end{dfn}
}
%
%
%
%
%
%
%
%
%
\begin{rmrk}\label{exlivf1}
It is noteworthy that any IVF $\textbf{F}: \mathbb{R}^n \rightarrow I(\mathbb{R})$ of the following form
\[\textbf{F}(x)=\bigoplus_{i=1}^n x_i \odot \textbf{A}_i=\bigoplus_{i=1}^n x_i \odot [\underline{a}_i, \overline{a}_i],
\]
is a linear IVF.
\end{rmrk}
\begin{example}\label{exlinear}
The IVF $\textbf{F}(x): \mathbb{R} \rightarrow I(\mathbb{R})$, which is defined by
\[
\textbf{F}(x)=[-3, 7]\odot x
\]
is a linear IVF, which is depicted in Figure \ref{fig_linearivf} by gray shaded region.
\begin{figure}[H]
\begin{center}
\includegraphics[scale=0.75]{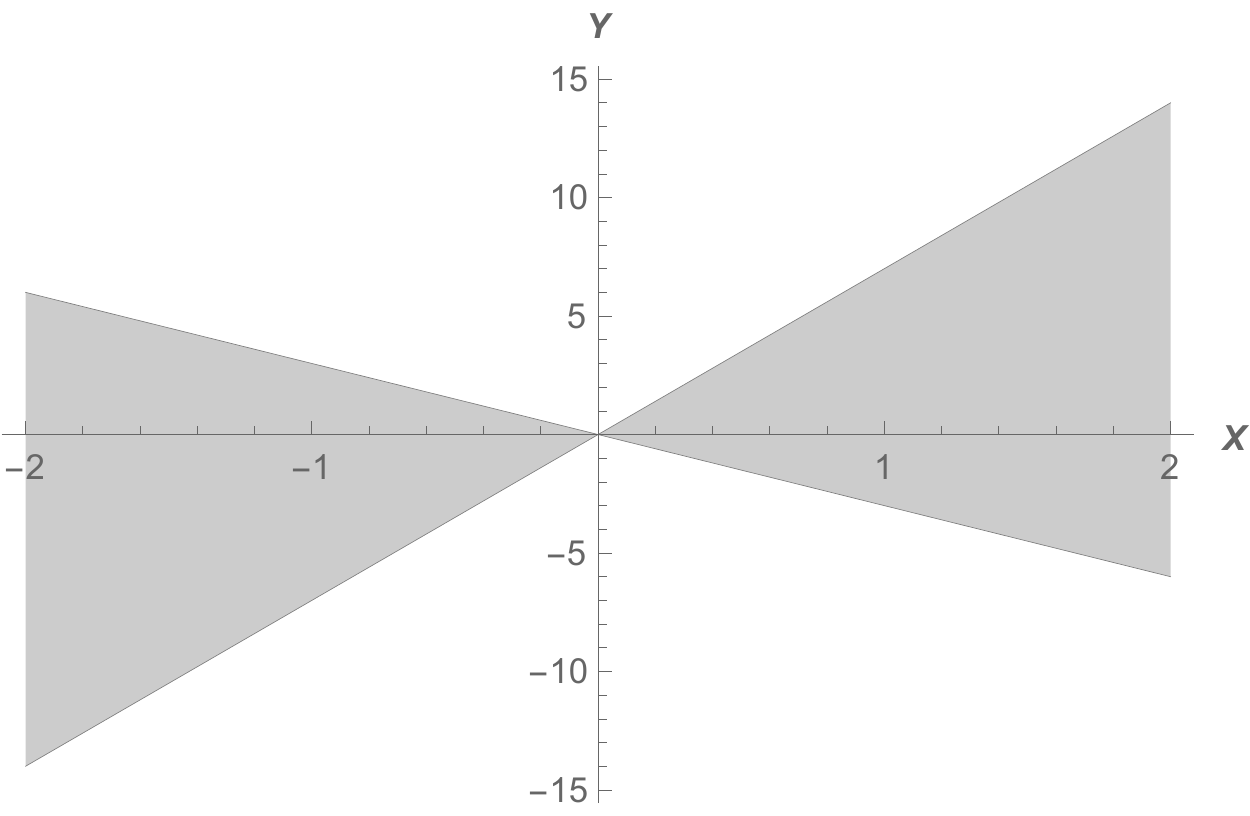}
    \caption{Interval-valued function of Example \ref{exlinear}} \label{fig_linearivf}
\end{center}
\end{figure}

\end{example}
{\color{red}
\begin{rmrk}\label{exlivf2}
A linear IVF $\textbf{F}$ on a linear subspace $\mathcal{X}$ of $\mathbb{R}^n$ satisfies the following conditions: 
\begin{enumerate}[(i)]
\item\label{cl1} $\textbf{F}(\lambda x)=\lambda\odot\textbf{F}(x)~ \text{for all}~x\in \mathcal{X}~\text{and for all}~\lambda \in \mathbb{R}$, and 
\item\label{cl3} for all $x,~y\in \mathcal{X}$, either
\[
\textbf{F}(x+y) = \textbf{F}(x)\oplus\textbf{F}(y) 
\]
 or none of $\textbf{F}(x)\oplus\textbf{F}(y)$ and $\textbf{F}(x+y)$ dominates the other.
\end{enumerate}
For the proof, see \ref{aplivf}.
\end{rmrk}
}
\begin{dfn}\label{dghd} (\emph{$gH$-differentiability}).
Let $\mathcal{X}$ be a nonempty subset of $\mathbb{R}^n$. An IVF $\textbf{F}:\mathcal{X} \rightarrow I(\mathbb{R})$ is said to be $gH$-differentiable at a point $\bar{x} \in \mathcal{X}$ if there exists a linear IVF $\textbf{L}_{\bar{x}}:\mathbb{R}^n\rightarrow I(\mathbb{R})$, an IVF $\textbf{E}(\textbf{F}(\bar{x});d)$ and a $\delta~>~0$ such that
\[
\left(\textbf{F}(\bar{x}+d)\ominus_{gH} \textbf{F}(\bar{x})\right)\ominus_{gH} \textbf{L}_{\bar{x}}(d)=\lVert d \rVert \odot \textbf{E}(\textbf{F}(\bar{x});d)~~\text{for all}~~d~~\text{such that}~~\lVert d \rVert~<~ \delta,
\]
where $\textbf{E}(\textbf{F}(\bar{x});d)\rightarrow \textbf{0}$ as $\lVert d \rVert\rightarrow 0$.\\

\noindent If $\textbf{F}$ is $gH$-differentiable at each point $\bar{x} \in \mathcal{X}$, then $\textbf{F}$ is said to be $gH$-differentiable on $\mathcal{X}$.
\end{dfn}
\begin{rmrk}
It is to note from Definition \ref{dghd} that
\begingroup\allowdisplaybreaks\begin{align*}
&\lim_{\lVert d \rVert \rightarrow 0}\left[\left(\textbf{F}(\bar{x} + d) \ominus_{gH} \textbf{F}(\bar{x})\right) \ominus_{gH} \textbf{L}_{\bar{x}}(d)\right] = \lim_{\lVert d \rVert \rightarrow 0}\textbf{E}(\textbf{F}(\bar{x}); d) \\
\text{or, }&\lim_{\lVert d \rVert \rightarrow 0}\left(\textbf{F}(\bar{x} + d) \ominus_{gH} \textbf{F}(\bar{x})\right) \ominus_{gH} \lim_{\lVert d \rVert \rightarrow 0}\textbf{L}_{\bar{x}}(d) = \textbf{0}\\
\text{or, }&\lim_{\lVert d \rVert \rightarrow 0} \left(\textbf{F}(\bar{x}+d) \ominus_{gH} \textbf{F}(\bar{x}) \right) = \textbf{0}.
\end{align*}\endgroup
Thus, every $gH$-differentiable IVF $\textbf{F}$ is $gH$-continuous.
\end{rmrk}

The following lemma is same as Proposition 7 in \cite{stefanini2019}. However, in \cite{stefanini2019}, Proposition 7 is proved by expressing an IVF $\textbf{F}$ in terms of its midpoint-radius representation, i.e., $\textbf{F}=\left[\tfrac{\underline{f}+\overline{f}}{2}, \tfrac{\overline{f}-\underline{f}}{2}\right]$, but in this article, to prove the following lemma we do not use the midpoint-radius representation of an IVF.

\begin{lem}\label{ld1}
Let $\mathcal{X}$ be a nonempty subset of $\mathbb{R}^n$. If an IVF $\textbf{F}:\mathcal{X}\rightarrow I(\mathbb{R})$ is $gH$-differentiable at $\bar{x}\in \mathcal{X}$, then there exists a nonzero $\lambda$ and $\delta~>~0$ such that
\[
\lim_{\lambda \to 0} \tfrac{1}{\lambda} \odot \left(\textbf{F}(\bar{x} + \lambda h) \ominus_{gH} \textbf{F}(\bar{x})\right)
=\textbf{L}_{\bar{x}}(h)~~\text{for all}~~h \in \mathbb{R}^n ~~\text{with}~~|\lambda|\lVert h \rVert ~<~ \delta,
\]
where $\textbf{L}_{\bar{x}}$ is the linear IVF in Definition \ref{dghd}.
\end{lem}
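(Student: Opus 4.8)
The plan is to substitute $d=\lambda h$ into the defining identity of $gH$-differentiability from Definition \ref{dghd} and then divide through by $\lambda$. First I would fix a direction $h\in\mathbb{R}^n$ and, for the $\delta>0$ supplied by the differentiability of $\textbf{F}$ at $\bar{x}$, take any nonzero scalar $\lambda$ with $|\lambda|\lVert h\rVert<\delta$. Setting $d=\lambda h$ gives $\lVert d\rVert=|\lambda|\lVert h\rVert$, so the defining identity becomes
\[
\left(\textbf{F}(\bar{x}+\lambda h)\ominus_{gH}\textbf{F}(\bar{x})\right)\ominus_{gH}\textbf{L}_{\bar{x}}(\lambda h)=|\lambda|\lVert h\rVert\odot\textbf{E}(\textbf{F}(\bar{x});\lambda h).
\]
By the homogeneity of a linear IVF, namely property (\ref{cl1}) in Remark \ref{exlivf2}, we have $\textbf{L}_{\bar{x}}(\lambda h)=\lambda\odot\textbf{L}_{\bar{x}}(h)$, so the subtracted term is $\lambda\odot\textbf{L}_{\bar{x}}(h)$.

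Next I would multiply both sides by $\tfrac{1}{\lambda}$. The key algebraic fact I intend to use is the distributivity of scalar multiplication over the $gH$-difference, that is,
\[
c\odot(\textbf{A}\ominus_{gH}\textbf{B})=(c\odot\textbf{A})\ominus_{gH}(c\odot\textbf{B})\quad\text{for every }c\in\mathbb{R},
\]
which I would verify directly from the $\min/\max$ endpoint formula for $\ominus_{gH}$, treating $c>0$ and $c<0$ separately (for $c<0$ one uses $\min\{cp,cq\}=c\max\{p,q\}$ and $\max\{cp,cq\}=c\min\{p,q\}$). Applying this with $c=\tfrac{1}{\lambda}$ collapses the left-hand side to
\[
\tfrac{1}{\lambda}\odot\left(\textbf{F}(\bar{x}+\lambda h)\ominus_{gH}\textbf{F}(\bar{x})\right)\ominus_{gH}\textbf{L}_{\bar{x}}(h),
\]
since $\tfrac{1}{\lambda}\odot(\lambda\odot\textbf{L}_{\bar{x}}(h))=\textbf{L}_{\bar{x}}(h)$, while the right-hand side becomes $\tfrac{|\lambda|}{\lambda}\lVert h\rVert\odot\textbf{E}(\textbf{F}(\bar{x});\lambda h)$.

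Finally I would let $\lambda\to 0$. Because $\textbf{E}(\textbf{F}(\bar{x});\lambda h)\to\textbf{0}$ and the scalar factor $\tfrac{|\lambda|}{\lambda}\lVert h\rVert$ stays bounded (it equals $\pm\lVert h\rVert$), the right-hand side tends to $\textbf{0}$, whence
\[
\lim_{\lambda\to 0}\left[\tfrac{1}{\lambda}\odot\left(\textbf{F}(\bar{x}+\lambda h)\ominus_{gH}\textbf{F}(\bar{x})\right)\ominus_{gH}\textbf{L}_{\bar{x}}(h)\right]=\textbf{0}.
\]
Since $\textbf{A}\ominus_{gH}\textbf{B}=\textbf{0}$ forces $\textbf{A}=\textbf{B}$, this rearranges to the asserted limit $\lim_{\lambda\to 0}\tfrac{1}{\lambda}\odot(\textbf{F}(\bar{x}+\lambda h)\ominus_{gH}\textbf{F}(\bar{x}))=\textbf{L}_{\bar{x}}(h)$. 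The main obstacle I anticipate is the careful bookkeeping of the sign of $\lambda$ when dividing: the distributive law must be validated for negative scalars and the $\tfrac{|\lambda|}{\lambda}$ factor tracked, but neither affects the conclusion because the error term $\textbf{E}$ vanishes in the limit.
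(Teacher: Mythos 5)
Your proposal is correct and follows essentially the same route as the paper's own proof: substitute $d=\lambda h$ into the defining identity of Definition \ref{dghd}, use homogeneity $\textbf{L}_{\bar{x}}(\lambda h)=\lambda\odot\textbf{L}_{\bar{x}}(h)$, scale by $\tfrac{1}{\lambda}$ via distributivity of scalar multiplication over $\ominus_{gH}$, and let $\lambda\to 0$ so the error term vanishes. If anything, you are slightly more careful than the paper, which leaves the negative-scalar case of the distributive law and the bounded factor $\tfrac{|\lambda|}{\lambda}\lVert h\rVert$ implicit.
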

\begin{proof}
See \ref{apld1}.
\end{proof}
\begin{thm}\label{td1}
Let an IVF $\textbf{F}$ on a nonempty subset $\mathcal{X}$ of $\mathbb{R}^n$ be $gH$-differentiable at $\bar{x}\in \mathcal{X}$. Then, for each $d = (d_1, d_2, \ldots, d_n)^T \in \mathbb{R}^n$, the $gH$-gradient of $\textbf{F}$ at $\bar{x}$ exists and the linear IVF $\textbf{L}_{\bar{x}}$ in Definition \ref{dghd} can be expressed by
\begin{equation}\label{edf3}
\textbf{L}_{\bar{x}}(d) = d^T \odot \nabla\textbf{F}(\bar{x}),
\end{equation}
where $d^T \odot \nabla\textbf{F}(\bar{x})=\bigoplus_{i=1}^n d_i \odot D_i\textbf{F}(\bar{x})$.
\end{thm}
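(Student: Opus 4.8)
The plan is to extract each partial $gH$-derivative from the directional limit supplied by Lemma \ref{ld1}, and then to recover $\textbf{L}_{\bar{x}}$ from its values on the standard basis vectors using the definition of a linear IVF.

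First, I would invoke Lemma \ref{ld1}, which guarantees that for every direction $h \in \mathbb{R}^n$,
\[
\lim_{\lambda \to 0} \tfrac{1}{\lambda} \odot \left(\textbf{F}(\bar{x} + \lambda h) \ominus_{gH} \textbf{F}(\bar{x})\right) = \textbf{L}_{\bar{x}}(h).
\]
Specializing to $h = e_i$, the $i$-th standard basis vector, the left-hand side becomes precisely the $gH$-derivative at $\bar{x}_i$ of the single-variable restriction $\textbf{G}_i(x_i) = \textbf{F}(\bar{x}_1, \ldots, x_i, \ldots, \bar{x}_n)$ of Definition \ref{pdgh}, because $\textbf{G}_i(\bar{x}_i + \lambda) = \textbf{F}(\bar{x} + \lambda e_i)$. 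Hence the limit exists, the $i$-th partial $gH$-derivative exists, and
\[
D_i \textbf{F}(\bar{x}) = \textbf{G}'_i(\bar{x}_i) = \textbf{L}_{\bar{x}}(e_i).
\]
Since this holds for every $i = 1, 2, \ldots, n$, all partial $gH$-derivatives exist, so the $gH$-gradient $\nabla \textbf{F}(\bar{x}) = \left(D_1 \textbf{F}(\bar{x}), \ldots, D_n \textbf{F}(\bar{x})\right)^T$ is well defined.

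Next, since $\textbf{L}_{\bar{x}}$ is a linear IVF on $\mathbb{R}^n$, Definition \ref{dlivf} gives, for any $d = (d_1, \ldots, d_n)^T \in \mathbb{R}^n$,
\[
\textbf{L}_{\bar{x}}(d) = \bigoplus_{i=1}^n d_i \odot \textbf{L}_{\bar{x}}(e_i).
\]
Substituting $\textbf{L}_{\bar{x}}(e_i) = D_i \textbf{F}(\bar{x})$ from the first step yields
\[
\textbf{L}_{\bar{x}}(d) = \bigoplus_{i=1}^n d_i \odot D_i \textbf{F}(\bar{x}) = d^T \odot \nabla \textbf{F}(\bar{x}),
\]
which is exactly \eqref{edf3}.

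The main obstacle I anticipate is the first step: carefully identifying the restricted directional limit with the single-variable $gH$-derivative that defines the partial $gH$-derivative. One must verify that the two-sided limit $\lambda \to 0$ produced by Lemma \ref{ld1} (which admits negative $\lambda$, and hence scalar multiplication by negatives that reverses the interval endpoints under $\odot$) genuinely coincides with $\textbf{G}'_i(\bar{x}_i)$, whose definition already divides by a signed increment. Once this identification is secured, the remainder is a direct application of the linear-IVF definition and involves no further estimation.
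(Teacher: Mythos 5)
Your proposal is correct and follows essentially the same route as the paper's own proof: invoke Lemma \ref{ld1}, specialize the directional limit to the standard basis vectors $e_i$ to identify $\textbf{L}_{\bar{x}}(e_i)$ with $D_i\textbf{F}(\bar{x})$ (hence the $gH$-gradient exists), and then expand $\textbf{L}_{\bar{x}}(d)$ via Definition \ref{dlivf}. The identification you flag as the main obstacle is indeed immediate, since $\textbf{G}_i(\bar{x}_i+\lambda)=\textbf{F}(\bar{x}+\lambda e_i)$ makes the two two-sided limits literally the same expression; the paper glosses over this in one line, and the only other cosmetic difference is that the paper disposes of the trivial case $d=\textbf{0}$ separately.
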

\begin{proof}
See \ref{aptd1}.
\end{proof}
{\color{red} The following example shows that the converse of Theorem \ref{td1} is not true.
\begin{example}\label{exl}
On $\mathbb{R}^2$, consider the following IVF: 
\[
\textbf{F}(x) = x_1\odot\textbf{A}_1\oplus x_2\odot\textbf{A}_2 =x_1\odot[-1, 1]\oplus x_2\odot[0, 2]. 
\]
At $\bar{x} = (0, 1)$,
\[
\textbf{F}(\bar{x})=[0, 2],~D_1\textbf{F}(\bar{x})=\textbf{A}_1=[-1, 1],~D_2\textbf{F}(\bar{x})=\textbf{A}_2=[0, 2].
\]
Therefore, the $gH$-gradient of $\textbf{F}$ at $\bar{x} = (0, 1)$ exists,  $\nabla\textbf{F}(\bar{x})=\left(\textbf{A}_1, \textbf{A}_2\right)^T$ and
\[
d^T\odot\nabla\textbf{F}(\bar{x})=d_1\odot\textbf{A}_1\oplus d_2\odot\textbf{A}_2=\textbf{F}(d)
\text{ for any direction } d\in \mathbb{R}^2.
\]
However, $\textbf{F}$ is not $gH$-differentiable at $\bar{x}$ because at a direction $d=(t, -t)$ with $t>0$ and $\lVert d \rVert<\tfrac{1}{2}$, we obtain
\[
\textbf{F}(\bar{x} + d)=\textbf{F}(t, 1-t)=[-t, 2-t],~\textbf{F}(d)=\textbf{F}(t, -t)=[-3t, t],
\]
and
\begingroup\allowdisplaybreaks\begin{align*}
&\lim_{t\rightarrow 0+}\frac{1}{\lVert d \rVert} \odot\big(\left(\textbf{F}(\bar{x} + d) \ominus_{gH} \textbf{F}(\bar{x})\right)\ominus_{gH}d^T\odot \nabla\textbf{F}(\bar{x})\big)\\
=~&
\lim_{t\rightarrow 0+}\frac{1}{\sqrt{2}t} \odot\big(\left([-t, 2-t] \ominus_{gH} [0, 2]\right)\ominus_{gH} [-3t, t]\big)\\
=~&
\lim_{t\rightarrow 0+}\frac{1}{\sqrt{2}t} \odot\big([-t, -t] \ominus_{gH}[-3t, t]\big)\\
=~&
\lim_{t\rightarrow 0+}\frac{1}{\sqrt{2}t} \odot[-2t, 2t]\\
=~& \sqrt{2}\odot [-1, 1]\\
\neq~& \textbf{0}.
\end{align*}\endgroup
\end{example}
} 
\begin{rmrk}\label{nd1}
By Theorem \ref{td1}, one can notice that the proposed Definition \ref{dghd} of $gH$-differentiability of this article implies the definition of $gH$-differentiability proposed in \cite{stefanini2009}.
 One may think that the definition of $gH$-differentiability of this article is same as that in \cite{ghosh2016newton}. However, it can be noted that the IVF $\textbf{L}_{\bar{x}}$ in \cite{ghosh2016newton} has been considered with the following two properties:
\begin{enumerate}[(a)]
\item\label{cgl1} $\textbf{L}_{\bar{x}}(\lambda x) = \lambda \odot \textbf{L}_{\bar{x}}(x)$ for all $\lambda \in \mathbb{R}$ and $x \in \mathbb{R}^n$ and
\item \label{cgl2} $\textbf{L}_{\bar{x}}(x + y) = \textbf{L}_{\bar{x}}(x) \oplus \textbf{L}_{\bar{x}}(y)$ for all $x,~ y \in \mathbb{R}^n$.
\end{enumerate}
Thus, the IVF $\textbf{L}_{\bar{x}}$ in \cite{ghosh2016newton} is a particular case of the proposed $\textbf{L}_{\bar{x}}$ (see Definition \ref{dlivf}). Hence, the definition of $gH$-differentiability of this article is more general than the definition of \cite{ghosh2016newton}. In the following example, we provide an IVF, which is $gH$-differentiable in the sense of this article but not in the sense of \cite{ghosh2016newton}.\\ 
\end{rmrk}

\begin{example}
Consider the IVF $\textbf{F}:\mathbb{R}\rightarrow I(\mathbb{R})$ which is defined by
\[
\textbf{F}(x)=[-1, 1]\odot x^2,~ x \in \mathbb{R}.
\]
Thus, 
\[
\underline{f}(x)=-x^2~\text{ and }~\overline{f}(x)=x^2.
\]
The $gH$-gradient of $\textbf{F}$ is
\[
\nabla \textbf{F}(x)=[-2, 2]\odot x.
\]
Since both the real-valued functions $\underline{f}$ and $\overline{f}$ are differentiable at $\bar{x}=1$, according to Remark \ref{rd1} the IVF $\textbf{F}$ is $gH$-differentiable at $\bar{x}=1$. Hence, due to Theorem \ref{td1} of this article and Theorem 1 of \cite{ghosh2016newton} there exists an IVF $\textbf{L}_{1}$ such that
\begingroup\allowdisplaybreaks\begin{align*}
\textbf{L}_{1}(h)&=h \odot \nabla \textbf{F}(1)\\
&=h \odot [-2, 2] \odot (1)\\
&=[-2, 2]\odot h, ~~\text{where}~~h\in \mathbb{R}.
\end{align*}\endgroup
By Remark \ref{exlivf1}, $\textbf{L}_{1}$ is a linear IVF. Hence, in the sense of the definition of $gH$-differentiability of this article, $\textbf{F}$ is $gH$-differentiable at $\bar{x}=1$.\\

\noindent However, $\textbf{F}$ is not $gH$-differentiable at $\bar{x}=1$ in the sense of \cite{ghosh2016newton} because {\color{red} there exist some $p$, $q\in \mathbb{R}$} such that
\[
\textbf{L}_{1}(p+q)\neq\textbf{L}_{1}(p)\oplus\textbf{L}_{1}(q).
\]
For instance, consider $p=3$ and $q=-2$. Then,
\[
\textbf{L}_{1}(p+q)=\textbf{L}_{1}(1)=[-2, 2]
\]
and
\[
\textbf{L}_{1}(p)\oplus\textbf{L}_{1}(q)=[-9, 9] \oplus [-6, 6]=[-15, 15]~\neq~[-2, 2].
\]
\end{example}
\begin{rmrk} It may appear as if Theorem $3.1$ of this article is same as Theorem $1$ in \cite{ghosh2016newton} but we have seen that $\textbf{L}_{\bar{x}}$ in \cite{ghosh2016newton} is a particular case of $\textbf{L}_{\bar{x}}$ of this article. Thus, it is clear that Theorem $3.1$ of this article is the generalized version of Theorem $1$ in \cite{ghosh2016newton}.
\end{rmrk}
{\color{red}
\begin{rmrk}
It is noteworthy that although each linear real-valued function is differentiable in its domain, Example \ref{exl} shows that there exists a few linear IVFs that are not $gH$-differentiable.
\end{rmrk}

The following theorem provides a condition for a linear IVF to be $gH$-differentiable.
\begin{thm}\label{td1a}
Let $\mathcal{X}$ be a linear subspace of $\mathbb{R}^n$ and $\textbf{F}$ be an IVF on $\mathcal{X}$. For a given $\bar{x} \in \mathcal{X}$, if for any $d\in \mathcal{N}_\delta(\bar{x})\cap \mathcal{X}$,
\[
    \textbf{F}(\bar{x}+d)=\textbf{F}(\bar{x})\oplus\textbf{F}(d),
\]
where $\mathcal{N}_\delta(\bar{x})$ is a $\delta$-neighborhood of $\bar{x}$, then $\textbf{F}$
is $gH$-differentiable at $\bar{x}\in \mathcal{X}$.  
\end{thm}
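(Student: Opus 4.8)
The plan is to verify Definition \ref{dghd} directly by exhibiting the required linear IVF $\textbf{L}_{\bar{x}}$ together with an error term that vanishes identically, so that no limiting estimate is actually needed. Everything hinges on converting the hypothesis into a closed-form expression for the increment $\textbf{F}(\bar{x}+d)\ominus_{gH}\textbf{F}(\bar{x})$. The first step is purely algebraic: for every $d$ with $\bar{x}+d\in\mathcal{N}_\delta(\bar{x})\cap\mathcal{X}$ the hypothesis gives $\textbf{F}(\bar{x}+d)=\textbf{F}(\bar{x})\oplus\textbf{F}(d)$, and applying the defining property of the $gH$-difference (namely $\textbf{C}=\textbf{A}\ominus_{gH}\textbf{B}$ whenever $\textbf{A}=\textbf{B}\oplus\textbf{C}$) with $\textbf{A}=\textbf{F}(\bar{x}+d)$, $\textbf{B}=\textbf{F}(\bar{x})$, $\textbf{C}=\textbf{F}(d)$ produces the cancellation
\[
\textbf{F}(\bar{x}+d)\ominus_{gH}\textbf{F}(\bar{x})=\textbf{F}(d).
\]
If a self-contained check is preferred, the same identity falls out of the $\min/\max$ formula for $\ominus_{gH}$ applied coordinatewise.

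Second, I would produce the linear map. Since $\textbf{F}$ is the linear IVF under consideration, Definition \ref{dlivf} writes $\textbf{F}(d)=\bigoplus_{i=1}^n d_i\odot\textbf{F}(e_i)$ for $d\in\mathcal{X}$, so I set $\textbf{L}_{\bar{x}}(d)=\bigoplus_{i=1}^n d_i\odot\textbf{F}(e_i)$ for $d\in\mathbb{R}^n$. By Remark \ref{exlivf1} this is genuinely a linear IVF (it has the form $\bigoplus_i d_i\odot\textbf{A}_i$ with $\textbf{A}_i=\textbf{F}(e_i)$), and by construction it agrees with $\textbf{F}$ on $\mathcal{X}$.

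Combining the two steps, for all admissible $d$ with $\lVert d\rVert<\delta$,
\[
\left(\textbf{F}(\bar{x}+d)\ominus_{gH}\textbf{F}(\bar{x})\right)\ominus_{gH}\textbf{L}_{\bar{x}}(d)=\textbf{F}(d)\ominus_{gH}\textbf{F}(d)=\textbf{0}=\lVert d\rVert\odot\textbf{0}.
\]
Thus Definition \ref{dghd} is satisfied with the choice $\textbf{E}(\textbf{F}(\bar{x});d)\equiv\textbf{0}$, which trivially tends to $\textbf{0}$ as $\lVert d\rVert\rightarrow 0$, and $\textbf{F}$ is $gH$-differentiable at $\bar{x}$.

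The argument is short precisely because the hypothesis does the heavy lifting, and the single point deserving care is the legitimacy of the first step. This is exactly the feature that fails for a generic linear IVF: interval scalar multiplication is only subdistributive when scalars of opposite sign are involved, so typically $\textbf{F}(\bar{x}+d)\neq\textbf{F}(\bar{x})\oplus\textbf{F}(d)$ and the increment $\textbf{F}(\bar{x}+d)\ominus_{gH}\textbf{F}(\bar{x})$ then differs from $\textbf{F}(d)$, which is the very mechanism behind the non-differentiability in Example \ref{exl}. Hence I would emphasise that the additivity assumption is the crucial hypothesis that removes this obstruction, rather than a technical convenience; once it holds, the linear IVF $\textbf{L}_{\bar{x}}$ reproduces the increment exactly and differentiability follows with zero remainder.
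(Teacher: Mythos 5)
Your proof is correct and takes essentially the same route as the paper's: both use the additivity hypothesis to collapse the increment $\textbf{F}(\bar{x}+d)\ominus_{gH}\textbf{F}(\bar{x})$ to $\textbf{F}(d)$, identify the approximating linear IVF $\textbf{L}_{\bar{x}}(d)=\bigoplus_{i=1}^n d_i\odot\textbf{F}(e_i)$ (which the paper writes as $d^T\odot\nabla\textbf{F}(\bar{x})$ via Theorem \ref{td1}), and observe that the remainder vanishes. The only cosmetic difference is that the paper presents the conclusion as the limit of $\tfrac{1}{\lVert d\rVert}\odot\big((\textbf{F}(\bar{x}+d)\ominus_{gH}\textbf{F}(\bar{x}))\ominus_{gH}d^T\odot\nabla\textbf{F}(\bar{x})\big)$ being $\textbf{0}$, whereas you note the error term is identically $\textbf{0}$ so no limit is needed; the substance is identical.
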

\begin{proof}
See \ref{aptd1a}.
\end{proof}
}
\begin{thm}\label{td2}
Let an IVF $\textbf{F}$ on a nonempty open convex subset $\mathcal{X}$ of $\mathbb{R}^n$ be $gH$-differentiable. If the function $\textbf{F}$ is convex on $\mathcal{X}$, then
\[
(y-x)^T \odot \nabla\textbf{F}(x)~\preceq~ \textbf{F}(y)\ominus_{gH}\textbf{F}(x)  ~\text{ for all } x,~y\in \mathcal{X}.
\]
\end{thm}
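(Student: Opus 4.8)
The plan is to mimic the classical first-order characterization of convexity for real-valued functions, but to carry the argument out at the level of interval endpoints so as to respect the $gH$-difference. Fix $x, y \in \mathcal{X}$ and set $h = y - x$. Since $\mathcal{X}$ is open and convex, the point $x + \lambda h = (1 - \lambda)x + \lambda y$ lies in $\mathcal{X}$ for every $\lambda \in [0, 1]$, so all the quantities below are well defined.

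First I would apply the convexity of $\textbf{F}$ with weights $1 - \lambda$ and $\lambda$ to obtain
\[
\textbf{F}(x + \lambda h) \preceq (1 - \lambda) \odot \textbf{F}(x) \oplus \lambda \odot \textbf{F}(y), \quad \lambda \in (0, 1).
\]
Reading this off endpointwise (recall that $\textbf{A} \preceq \textbf{B}$ means $\underline{a} \le \underline{b}$ and $\overline{a} \le \overline{b}$) and using $1 - \lambda, \lambda \ge 0$ gives
\[
\underline{f}(x + \lambda h) - \underline{f}(x) \le \lambda\left(\underline{f}(y) - \underline{f}(x)\right)
\quad\text{and}\quad
\overline{f}(x + \lambda h) - \overline{f}(x) \le \lambda\left(\overline{f}(y) - \overline{f}(x)\right).
\]
Dividing by $\lambda > 0$ and abbreviating $a(\lambda) = \underline{f}(x + \lambda h) - \underline{f}(x)$ and $b(\lambda) = \overline{f}(x + \lambda h) - \overline{f}(x)$, I obtain $a(\lambda)/\lambda \le \underline{f}(y) - \underline{f}(x)$ and $b(\lambda)/\lambda \le \overline{f}(y) - \overline{f}(x)$.

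The next step is to read these scalar inequalities as a dominance relation between intervals. Put $\textbf{D}_\lambda = \tfrac{1}{\lambda} \odot \left(\textbf{F}(x + \lambda h) \ominus_{gH} \textbf{F}(x)\right)$. By the endpoint formula for the $gH$-difference and since $\lambda > 0$, the endpoints of $\textbf{D}_\lambda$ are $\min\{a(\lambda)/\lambda,\, b(\lambda)/\lambda\}$ and $\max\{a(\lambda)/\lambda,\, b(\lambda)/\lambda\}$, whereas $\textbf{F}(y) \ominus_{gH} \textbf{F}(x)$ has endpoints $\min\{\underline{f}(y) - \underline{f}(x),\, \overline{f}(y) - \overline{f}(x)\}$ and the corresponding maximum. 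Invoking the elementary monotonicity $p \le P,\ q \le Q \Rightarrow \min\{p,q\} \le \min\{P,Q\}$ and $\max\{p,q\} \le \max\{P,Q\}$, the two scalar inequalities upgrade to $\textbf{D}_\lambda \preceq \textbf{F}(y) \ominus_{gH} \textbf{F}(x)$ for every $\lambda \in (0, 1)$.

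Finally I would let $\lambda \to 0^+$. By Lemma \ref{ld1} the limit of $\textbf{D}_\lambda$ exists and equals $\textbf{L}_{x}(h)$, and by Theorem \ref{td1} this linear IVF is precisely $\textbf{L}_{x}(h) = h^T \odot \nabla\textbf{F}(x) = (y - x)^T \odot \nabla\textbf{F}(x)$. Because the dominance relation is governed by $\le$ on the two endpoints, which vary continuously with $\lambda$, the inequality $\textbf{D}_\lambda \preceq \textbf{F}(y) \ominus_{gH} \textbf{F}(x)$ is preserved in the limit, which is exactly the assertion. The only delicate point is the bookkeeping of the $gH$-difference endpoints — keeping the $\min/\max$ alignment correct both when dividing by $\lambda$ and when passing to the limit — rather than any genuine analytic difficulty; in particular the argument never needs $\underline{f}$ and $\overline{f}$ to be separately differentiable, only the directional limit furnished by $gH$-differentiability.
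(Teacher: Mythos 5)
Your proof is correct and follows essentially the same route as the paper's: apply convexity to get $\textbf{F}(x+\lambda(y-x)) \preceq \lambda\odot\textbf{F}(y)\oplus(1-\lambda)\odot\textbf{F}(x)$, rearrange (the paper does this at the interval level, you at the endpoint level, but the $\min/\max$ bookkeeping is identical) to obtain $\tfrac{1}{\lambda}\odot\left(\textbf{F}(x+\lambda(y-x))\ominus_{gH}\textbf{F}(x)\right) \preceq \textbf{F}(y)\ominus_{gH}\textbf{F}(x)$, and then let $\lambda\to 0^+$ using Lemma \ref{ld1} and Theorem \ref{td1}. Your explicit remark that the dominance relation survives the limit because it is a pair of non-strict endpoint inequalities is a point the paper glosses over, but it is a refinement of the same argument, not a different one.
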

\begin{proof}
See \ref{aptd2}.
\end{proof}
\begin{thm}\label{td3}
Let an IVF $\textbf{F}$ on a nonempty open convex subset $\mathcal{X}$ of $\mathbb{R}^n$ be $gH$-differentiable on $\mathcal{X}$. If the function $\textbf{F}$ is convex on $\mathcal{X}$, then
\[
\textbf{0}\preceq(x-y)^T \odot \nabla\textbf{F}(x)\ominus_{gH}(x-y)^T \odot \nabla\textbf{F}(y) ~\text{ for all } x,~y\in \mathcal{X}.
\]
\end{thm}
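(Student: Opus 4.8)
The plan is to apply Theorem \ref{td2} twice—once to the ordered pair $(x,y)$ and once to $(y,x)$—and then stitch the two resulting first-order inequalities together by transitivity of the dominance relation, rather than by adding them. Writing $\textbf{B}=\textbf{F}(y)\ominus_{gH}\textbf{F}(x)$, Theorem \ref{td2} gives
\[
(y-x)^T\odot\nabla\textbf{F}(x)~\preceq~\textbf{B}\quad\text{and}\quad(x-y)^T\odot\nabla\textbf{F}(y)~\preceq~\textbf{F}(x)\ominus_{gH}\textbf{F}(y).
\]
First I would record two algebraic identities on intervals that let me move the factor $(-1)$ around: $(x-y)^T\odot\nabla\textbf{F}(x)=(-1)\odot\big((y-x)^T\odot\nabla\textbf{F}(x)\big)$ and $\textbf{F}(x)\ominus_{gH}\textbf{F}(y)=(-1)\odot\textbf{B}$. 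Both follow from the distributivity of $(-1)\odot$ over $\oplus$ and from the endpoint formula for $\ominus_{gH}$, the latter yielding the identity $(-1)\odot(\textbf{P}\ominus_{gH}\textbf{Q})=\textbf{Q}\ominus_{gH}\textbf{P}$.

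The key reduction is to observe that the target inequality is equivalent to a single dominance between the two scalarized gradients. Setting $\textbf{P}=(x-y)^T\odot\nabla\textbf{F}(x)$ and $\textbf{Q}=(x-y)^T\odot\nabla\textbf{F}(y)$, I would show
\[
\textbf{0}\preceq\textbf{P}\ominus_{gH}\textbf{Q}\Longleftrightarrow\textbf{Q}\preceq\textbf{P}
\]
using Lemma \ref{ldr2}(i) to rewrite $\textbf{0}\preceq\textbf{P}\ominus_{gH}\textbf{Q}$ as $(-1)\odot(\textbf{P}\ominus_{gH}\textbf{Q})=\textbf{Q}\ominus_{gH}\textbf{P}\preceq\textbf{0}$, followed by Lemma \ref{ldr1}(i) with the roles $\textbf{A}=\textbf{Q}$, $\textbf{B}=\textbf{P}$. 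Thus it suffices to prove $\textbf{Q}\preceq\textbf{P}$.

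To establish $\textbf{Q}\preceq\textbf{P}$, I would use the anti-monotonicity of multiplication by $(-1)$, namely that $\textbf{A}\preceq\textbf{B}$ holds if and only if $(-1)\odot\textbf{B}\preceq(-1)\odot\textbf{A}$, which is immediate from the componentwise definition of the dominance relation. Applying it to the first inequality, rewritten as $(-1)\odot\textbf{P}=(y-x)^T\odot\nabla\textbf{F}(x)\preceq\textbf{B}$, yields $(-1)\odot\textbf{B}\preceq\textbf{P}$; the second inequality reads $\textbf{Q}\preceq(-1)\odot\textbf{B}$. Chaining these through transitivity of $\preceq$ gives $\textbf{Q}\preceq(-1)\odot\textbf{B}\preceq\textbf{P}$, hence $\textbf{Q}\preceq\textbf{P}$, which is the desired conclusion.

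The main obstacle, and the reason the argument cannot simply mimic the real-variable proof, is that one cannot add the two first-order inequalities to cancel the function values: the sum of the right-hand sides is $\textbf{B}\oplus(-1)\odot\textbf{B}=[\underline{b}-\overline{b},\,\overline{b}-\underline{b}]$, which is a symmetric interval but is generally \emph{not} $\textbf{0}$ for a nondegenerate $\textbf{B}$, so the cancellation that drives the classical proof fails. Routing the argument through transitivity with the intermediate interval $(-1)\odot\textbf{B}$ circumvents this difficulty, and the only nonroutine verifications are the two $(-1)\odot$ distributivity identities and the anti-monotonicity of $\preceq$ under negation, all of which follow directly from the endpoint formulas for $\oplus$ and $\ominus_{gH}$.
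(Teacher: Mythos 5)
Your proof is correct and follows essentially the same route as the paper's: both apply Theorem \ref{td2} to the pairs $(x,y)$ and $(y,x)$, negate one of the resulting inequalities, and chain the two dominances through the intermediate interval $\textbf{F}(x)\ominus_{gH}\textbf{F}(y)$ by transitivity rather than by addition. The only difference is presentational: the paper carries out the chaining and the final conversion to the $gH$-difference statement explicitly at the level of interval endpoints, whereas you package the identical steps through Lemmas \ref{ldr1} and \ref{ldr2}, the identity $(-1)\odot(\textbf{P}\ominus_{gH}\textbf{Q})=\textbf{Q}\ominus_{gH}\textbf{P}$, and anti-monotonicity of $\preceq$ under negation.
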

\begin{proof}
See \ref{aptd3}.
\end{proof}
\begin{rmrk}
One may think that for a $gH$-differentiable IVF $\textbf{F}$ on $\mathcal{X}\subseteq\mathbb{R}^n$,
\[
(x-y)^T \odot \nabla\textbf{F}(x)\ominus_{gH}(x-y)^T \odot \nabla\textbf{F}(y)=(x-y)^T \odot \left(\nabla\textbf{F}(x)\ominus_{gH}\nabla\textbf{F}(y)\right)~\text{ for all } x,~y\in \mathcal{X}.
\]
Unfortunately, it is not true in general even if $\textbf{F}$ is convex on $\mathcal{X}$. For instance, consider the following IVF on $\mathbb{R}^2$: 
\[
\textbf{F}(x_1, x_2)=[1, 3]\odot x_1^2\oplus [1, 4]\odot x_2^2=[x_1^2+x_1^2,~ 3x_1^2+4x_2^2]. 
\]
Since $\underline{f}(x_1, x_2) = x_1^2+x_1^2$ and $\overline{f}(x_1, x_2)=3x_1^2+4x_2^2$ are convex on $\mathbb{R}^2$, by Remark \ref{rc1}, $\textbf{F}$ is convex on $\mathbb{R}^2$. \\

\noindent The $gH$-gradient of $\textbf{F}$ is
\[
\nabla\textbf{F}(x)=\left([2, 6]\odot x_1, [2, 8]\odot x_2\right)^T.
\]
Considering $x=(2, 0)$ and $y=(1, 1)$ we have
\begingroup\allowdisplaybreaks\begin{align*}
&(x-y)^T \odot \nabla\textbf{F}(x)\ominus_{gH}(x-y)^T \odot \nabla\textbf{F}(y)\\
=&~(1, -1)\odot\left([4, 12], [0, 0]\right)^T\ominus_{gH}(1, -1)\odot\left([2, 6], [2, 8]\right)^T\\
=&~[4, 12]\oplus[-6, 4]\\
=&~[-2, 16]
\end{align*}\endgroup
and
\begingroup\allowdisplaybreaks\begin{align*}
(x-y)^T\left(\nabla\textbf{F}(x)\ominus_{gH}\nabla\textbf{F}(y)\right)=&(1, -1)\odot\big(\left([4, 12], [0, 0]\right)^T\ominus_{gH}\left([2, 6], [2, 8]\right)^T\big)\\
=&~(1, -1)\odot\left([2, 6], [-8, -2]\right)^T\\
=&~[4, 14]\\
\neq&~[-2, 16].
\end{align*}\endgroup

\end{rmrk}


\section{Interval Optimization Problem and its Efficient Solutions}\label{siopes}

\noindent \hl{This section explores the connection between solutions and the $gH$-derivatives of the following IOP}:
\begin{equation}\label{IOP}
\displaystyle \min_{x \in \mathcal{X}\subseteq\mathbb{R}^n} \textbf{F}(x),
\end{equation}
where $\textbf{F}:\mathcal{X}\rightarrow I(\mathbb{R})$ is a $gH$-differentiable function.\\

\noindent The concept of an efficient solution of the IOP (\ref{IOP}) is defined below.
\begin{dfn}
(\emph{Efficient solution} \cite{wu2008})\label{efficient_sol_def}. A point $\bar{x} \in \mathcal{X}$ is called a global efficient solution of the IOP (\ref{IOP}) if $\textbf{F}(x) ~\nprec~ \textbf{F} (\bar{x})$ for all $x (\neq \bar{x}) \in
\mathcal{X}$.\\

\noindent A point $\bar{x} \in \mathcal{X}$ is called a local efficient solution of IOP (\ref{IOP}) if there exists a $\delta$-neighborhood $N_\delta(\bar{x})$ of $\bar{x}$ such that
\[
\textbf{F}(x) ~\nprec~ \textbf{F} (\bar{x})~~\text{for all}~~x (\neq \bar{x})\in N_\delta(\bar{x})\cap\mathcal{X}.
\]
\end{dfn}
It is to mention that here that Wu \cite{wu2008} named the efficient solution of this article as nondomiated solution. However, throughout this article, we follow Definition \ref{efficient_sol_def} for an efficient solution, and in the rest of the article, by an efficient solution we mean a global efficient solution. \\

{\color{red} Since an IOP is a special case of a fuzzy optimization problem  \cite{ghosh2019analytical}, the following theorem can be considered as a corollary of Theorem $10$ of \cite{stefanini2019}. Further, as differentiability of a function is a special case of  G\^{a}teaux differentiability, the following theorem can also be considered as a corollary of Theorem $4.2$ of \cite{ghosh2019derivative}. In this article, as we are dealing with interval optimization problems and $gH$-differentiability of interval-valued functions, we show the proof of the following theorem to enhance the readability. However, we prove the following theorem with a different approach than \cite{ghosh2019derivative}.} \\ 
\begin{thm}\emph{(Optimality condition).}\label{tes2}
Let $\textbf{F}$ be a $gH$-differentiable IVF on a nonempty subset $\mathcal{X}$ of $\mathbb{R}^n$. If $\bar{x} \in \mathcal{X}$ is an efficient solution of the IOP (\ref{IOP}), then
\[
0\in d^T \odot \nabla\textbf{F}(\bar{x}) ~ \text{ for all} ~ d\in \mathbb{R}^n.
\]
\end{thm}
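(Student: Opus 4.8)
The plan is to argue by contradiction, mirroring the classical first-order optimality argument for real-valued functions but carried out with the $gH$-machinery provided by Lemma \ref{ld1} and Theorem \ref{td1}. Suppose $\bar{x}$ is efficient yet $0\notin \hat{d}^T\odot\nabla\textbf{F}(\bar{x})$ for some $\hat{d}\in\mathbb{R}^n$. Writing the interval $\textbf{D}=\hat{d}^T\odot\nabla\textbf{F}(\bar{x})=[\underline{\ell},\overline{\ell}]$, the failure $0\notin\textbf{D}$ means the whole interval lies strictly on one side of $0$, i.e.\ either $\underline{\ell}>0$ or $\overline{\ell}<0$.

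First I would record the sign-flip identity $(-\hat{d})^T\odot\nabla\textbf{F}(\bar{x})=(-1)\odot\textbf{D}$, which follows from the distributivity of Moore's scalar multiplication over $\bigoplus$ together with $(-c)\odot\textbf{A}=(-1)\odot(c\odot\textbf{A})$. Consequently, if $\underline{\ell}>0$ then $(-1)\odot\textbf{D}=[-\overline{\ell},-\underline{\ell}]$ has both endpoints negative, while if $\overline{\ell}<0$ then $\textbf{D}$ itself has both endpoints negative. In either case there is a direction $d$ (namely $\hat{d}$ or $-\hat{d}$) with $d^T\odot\nabla\textbf{F}(\bar{x})\prec\textbf{0}$; fix this $d$.

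Next I would invoke Lemma \ref{ld1}, which together with Theorem \ref{td1} gives
\[
\lim_{\lambda\to 0}\tfrac{1}{\lambda}\odot\big(\textbf{F}(\bar{x}+\lambda d)\ominus_{gH}\textbf{F}(\bar{x})\big)=\textbf{L}_{\bar{x}}(d)=d^T\odot\nabla\textbf{F}(\bar{x})\prec\textbf{0}.
\]
Since both endpoints of the limiting interval are strictly negative and the endpoints of the difference quotient converge to them, for all sufficiently small $\lambda>0$ both endpoints of $\tfrac{1}{\lambda}\odot(\textbf{F}(\bar{x}+\lambda d)\ominus_{gH}\textbf{F}(\bar{x}))$ are negative; hence this interval is $\prec\textbf{0}$. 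Multiplying by the positive scalar $\lambda$ (which preserves endpoint signs) yields $\textbf{F}(\bar{x}+\lambda d)\ominus_{gH}\textbf{F}(\bar{x})\prec\textbf{0}$, and then Lemma \ref{ldr1}(\ref{part2}), in its contrapositive form, gives $\textbf{F}(\bar{x}+\lambda d)\prec\textbf{F}(\bar{x})$. For small $\lambda$ the point $\bar{x}+\lambda d$ lies in $\mathcal{X}$ (the $gH$-differentiability at $\bar{x}$ supplies a $\delta$-ball inside $\mathcal{X}$), so this contradicts the efficiency of $\bar{x}$ and the proof is complete.

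The step I expect to be the main obstacle is the transfer of strict domination from the limit to the difference quotients for small $\lambda$: one must observe that $\textbf{D}\prec\textbf{0}$ forces \emph{both} endpoints of $\textbf{D}$ to be strictly negative, an open condition that survives endpoint convergence, whereas a merely non-strict statement $\textbf{D}\preceq\textbf{0}$ could not be promoted back to strictness for the approximating intervals. The remaining sign bookkeeping, in the $-\hat{d}$ reduction and in the scalar multiplications by $1/\lambda$ and $\lambda$, is routine but must be carried out explicitly because Moore's multiplication swaps the endpoints of an interval under a negative scalar.
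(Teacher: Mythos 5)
Your proof is correct, and it is not just the paper's argument in contrapositive clothing: the two proofs handle the crucial limit step in genuinely different ways. The paper argues directly: efficiency gives $\textbf{F}(\bar{x}+\lambda d)\ominus_{gH}\textbf{F}(\bar{x})~\nprec~\textbf{0}$ for all small $\lambda>0$, it then passes this relation through the limit $\lambda\to 0$ to assert $d^T\odot\nabla\textbf{F}(\bar{x})~\nprec~\textbf{0}$ (its equation (\ref{ees3})), substitutes $-d$ and invokes Lemma \ref{ldr2} to get $\textbf{0}~\nprec~ d^T\odot\nabla\textbf{F}(\bar{x})$ (equation (\ref{ees4})), and combines the two to conclude $0\in d^T\odot\nabla\textbf{F}(\bar{x})$. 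The step from ``every difference quotient is $\nprec\textbf{0}$'' to ``the limit is $\nprec\textbf{0}$'' is delicate, because $\nprec$ is not a closed condition: for instance $[-1,\lambda]~\nprec~\textbf{0}$ for every $\lambda>0$, yet the limit $[-1,0]~\prec~\textbf{0}$. What a limit actually preserves is only the closed condition ``upper endpoint $\geq 0$'', and the paper's conclusion survives because imposing that closed condition for both $d$ and $-d$ still forces $\underline{\ell}\leq 0\leq\overline{\ell}$. Your proof runs the implication in the safe direction instead: assuming $0\notin\hat{d}^T\odot\nabla\textbf{F}(\bar{x})$, you produce (after the $\pm\hat{d}$ sign flip, which you justify correctly from Moore arithmetic) a direction whose derivative interval has both endpoints strictly negative --- an open condition --- and openness is exactly what lets you transfer strictness from the limit back to the difference quotients for small $\lambda>0$, whence $\textbf{F}(\bar{x}+\lambda d)~\prec~\textbf{F}(\bar{x})$ by Lemma \ref{ldr1}(\ref{part2}) and efficiency is contradicted. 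So the two proofs use the same ingredients (Lemma \ref{ld1}, Theorem \ref{td1}, the $d\mapsto -d$ symmetry, Lemma \ref{ldr1}) but in opposite logical directions; yours buys full rigor at the limit step --- precisely the issue you flagged as the main obstacle --- while the paper's direct version is shorter but, as literally written, quietly pushes a non-closed dominance relation through a limit and would need the small repair sketched above to be airtight.
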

\begin{proof}
Let $\bar{x} \in \mathcal{X}$ be an efficient solution of the IOP (\ref{IOP}). Therefore, for all $d\in \mathbb{R}^n$ and $\lambda\in\mathbb{R}$ so that $\bar{x}+\lambda d\in\mathcal{X}$, we have
\begingroup\allowdisplaybreaks\begin{align*}
& \textbf{F}(\bar{x}+\lambda d)~\nprec~ \textbf{F}(\bar{x}) \\
\text{or, } & \textbf{F}(\bar{x}+\lambda d)\ominus_{gH}\textbf{F}(\bar{x})~\nprec~ \textbf{0}, \text{ by Lemma \ref{ldr1}} \\
\text{or, } & \lim_{\lambda \rightarrow 0+ } \tfrac{1}{\lambda} \odot \left(\textbf{F}(\bar{x}+\lambda d)\ominus_{gH}\textbf{F}(\bar{x}) \right)~\nprec~ \textbf{0}\\ \text{or, } & \lim_{\lambda \rightarrow 0 } \tfrac{1}{\lambda} \odot \left(\textbf{F}(\bar{x}+\lambda d)\ominus_{gH}\textbf{F}(\bar{x}) \right)~\nprec~ \textbf{0}, \text{ since $\textbf{F}$ is $gH$-differentiable on $\mathcal{X}$}.
\end{align*}\endgroup
Therefore, by Lemma \ref{ld1} and Theorem \ref{td1}, we obtain
\begin{equation}\label{ees3}
d^T \odot \nabla\textbf{F}(\bar{x})~\nprec~ \textbf{0} ~ \text{for all} ~ d\in \mathbb{R}^n.
\end{equation}
Replacing $d$ by $-d$ in (\ref{ees3}) we obtain
\begingroup\allowdisplaybreaks\begin{align*}
& (-d)^T \odot \nabla\textbf{F}(\bar{x})~\nprec~ \textbf{0}\\
\text{or, } & (-1)\odot(d^T \odot \nabla\textbf{F}(\bar{x}))~\nprec~ \textbf{0}.
\end{align*}\endgroup
Thus, by Lemma \ref{ldr2},
\begin{equation}\label{ees4}
\textbf{0}~\nprec~ d^T \odot \nabla\textbf{F}(\bar{x}).
\end{equation}
By (\ref{ees3}) and (\ref{ees4}), we get
$
0\in d^T \odot \nabla\textbf{F}(\bar{x}) ~ \text{for all} ~ d\in \mathbb{R}^n.
$
\end{proof}
\begin{rmrk}
It is noteworthy that the converse of Theorem \ref{tes2} is not always true even if $\textbf{F}$ is a convex IVF. For instance, let us consider the following IOP:
\begin{equation}\label{ex1iop}
\min_{x\in \mathbb{R}} \textbf{F}(x),
\end{equation}
where
\[
\textbf{F}(x)=\begin{cases}
    [0, 3] \ominus_{gH} [-1, 0]\odot x^2, & \text{if } -1 ~\leq~ x ~\leq~ 1 \\
    [0, 2] \oplus [1, 1]\odot x^2,              & \text{otherwise}.
    \end{cases}
\]
\begin{figure}[H]
\begin{center}
\includegraphics[scale=0.65]{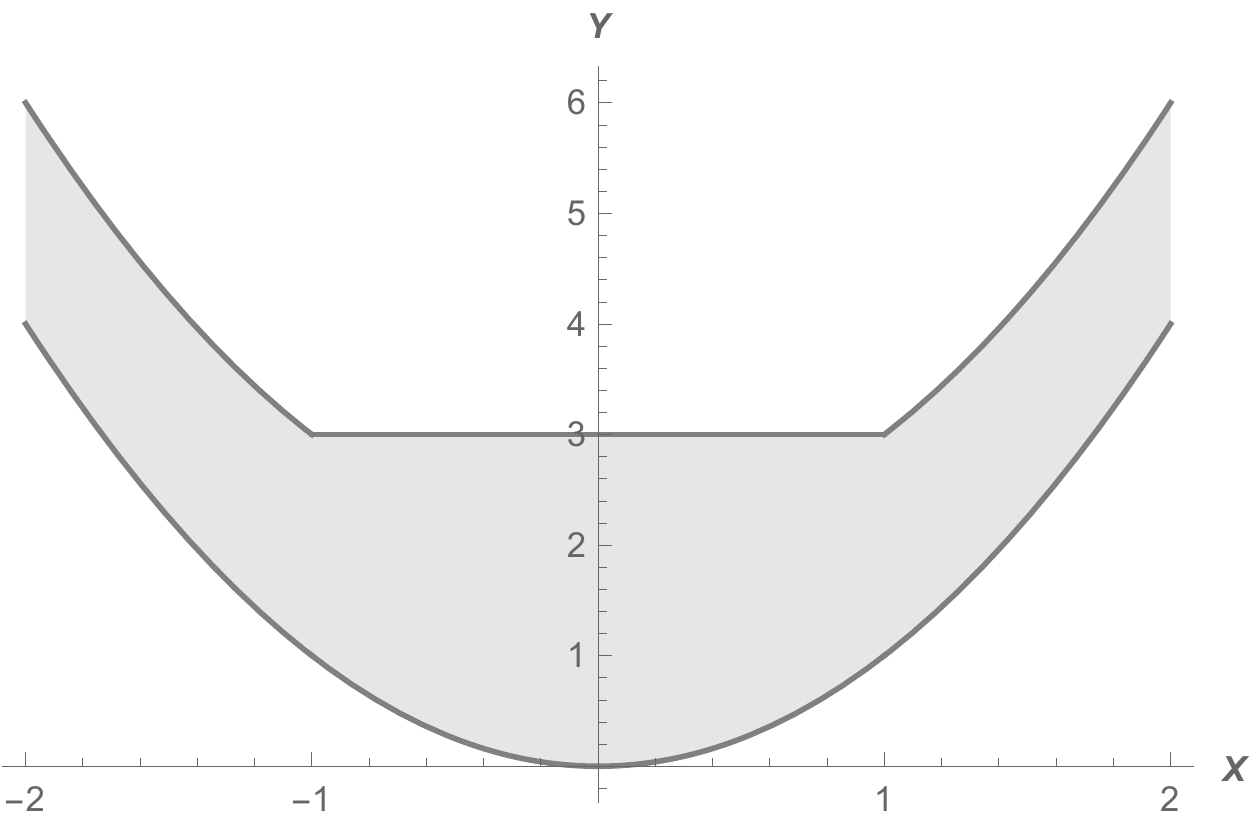}
    \caption{Interval-valued function of the IOP (\ref{ex1iop})} \label{fgm1}
\end{center}
\end{figure}
The graph of the IVF $\textbf{F}$ is depicted in Figure \ref{fgm1} by the shaded region. From Figure \ref{fgm1} it is clear that the IVF $\textbf{F}$ is convex since its lower and upper functions are convex. \\

\noindent The $gH$-gradient of $\textbf{F}$ is
\[
\nabla \textbf{F}(x)=\begin{cases}
    [0, 2]\odot x, & \text{if } -1 ~\leq~ x ~\leq~ 1 \\
    [2, 2]\odot x,              & \text{otherwise}.
    \end{cases}
\]
Thus, at $x=-1$,
\[
\nabla \textbf{F}(-1)=[-2, 0]~\Longrightarrow~ 0\in d\odot\nabla \textbf{F}(-1)~\text{for all}~d\in\mathbb{R}.
\]
But it is notable that for $0~<~h~<~2$,
\[
\textbf{F}(-1+h)=[(-1+h)^2, 3]\prec[1, 3]=\textbf{F}(-1).
\]
Therefore, although $0\in d\odot\nabla \textbf{F}(-1)~\text{for all}~d\in\mathbb{R}$, $-1$ is not an efficient solution of the IOP (\ref{ex1iop}).\\
\end{rmrk}
\begin{cor}\label{cred2}
Let $\textbf{F}$ be a $gH$-differentiable IVF on a nonempty subset $\mathcal{X}$ of $\mathbb{R}^n$. If $\bar{x} \in \mathcal{X}$ is an efficient solution of the IOP (\ref{IOP}), then
\[
0\in D_i \textbf{F}(\bar{x}) ~~ \text{for each} ~~ i\in \{1, 2, \ldots, n\}.
\]
\end{cor}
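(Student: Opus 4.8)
The plan is to obtain this as an immediate specialization of the optimality condition in Theorem \ref{tes2}. Since $\bar{x}$ is an efficient solution of the IOP (\ref{IOP}), Theorem \ref{tes2} already guarantees that $0 \in d^T \odot \nabla\textbf{F}(\bar{x})$ for \emph{every} direction $d \in \mathbb{R}^n$. The corollary is just the statement obtained by feeding in the standard basis vectors, so no new analytic work is required.

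First I would fix an arbitrary index $i \in \{1, 2, \ldots, n\}$ and choose the direction $d = e_i$, the $i$-th standard basis vector of $\mathbb{R}^n$. Applying Theorem \ref{tes2} with this particular $d$ yields $0 \in e_i^T \odot \nabla\textbf{F}(\bar{x})$.

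Next I would simplify the right-hand side. By the expression for $d^T \odot \nabla\textbf{F}(\bar{x})$ in Theorem \ref{td1}, we have $e_i^T \odot \nabla\textbf{F}(\bar{x}) = \bigoplus_{j=1}^n (e_i)_j \odot D_j\textbf{F}(\bar{x})$. Since the $j$-th coordinate of $e_i$ equals $1$ when $j = i$ and $0$ otherwise, every term with $j \neq i$ reduces to $0 \odot D_j\textbf{F}(\bar{x}) = \textbf{0}$, while the term with $j = i$ equals $1 \odot D_i\textbf{F}(\bar{x}) = D_i\textbf{F}(\bar{x})$. Adding the degenerate interval $\textbf{0}$ does not change the sum, so $e_i^T \odot \nabla\textbf{F}(\bar{x}) = D_i\textbf{F}(\bar{x})$, and therefore $0 \in D_i\textbf{F}(\bar{x})$. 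As $i$ was arbitrary, the conclusion holds for each $i \in \{1, 2, \ldots, n\}$.

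The argument is essentially bookkeeping; the only point that requires a moment's care --- and the closest thing to an obstacle --- is justifying the collapse of the $\bigoplus$ expression, namely that scaling an interval by $0$ under Moore's multiplication produces the degenerate interval $\textbf{0}$ and that $\textbf{A} \oplus \textbf{0} = \textbf{A}$, so that only the $i$-th summand survives.
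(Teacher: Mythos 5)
Your proof is correct and follows essentially the same route as the paper: both invoke Theorem \ref{tes2}, expand $d^T \odot \nabla\textbf{F}(\bar{x})$ as $\bigoplus_{i=1}^n d_i \odot D_i\textbf{F}(\bar{x})$ via Theorem \ref{td1}, and specialize to $d = e_i$. Your explicit justification that the terms with $j \neq i$ collapse (since $0 \odot \textbf{A} = \textbf{0}$ and $\textbf{A} \oplus \textbf{0} = \textbf{A}$) is a detail the paper leaves implicit, but it is the same argument.
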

\begin{proof}
Let $\bar{x} \in \mathcal{X}$ be an efficient solution of the IOP (\ref{IOP}). According to Theorem \ref{tes2}, for all $d\in \mathbb{R}^n$,  we have
\[
0 \in d^T \odot \nabla\textbf{F}(\bar{x})=\bigoplus_{i=1}^n d_i \odot D_i\textbf{F}(\bar{x}).
\]
For each $i\in \{1, 2, \ldots, n\}$, by considering $d=e_i$, we obtain
\[
0 \in D_i\textbf{F}(\bar{x}).
\]
\end{proof}
%
%
%


\section{$gH$-gradient Efficient Methods for Interval Optimization Problem}\label{sgdmiop}

\noindent \hl{This section develops $gH$-gradient efficient techniques to obtain the efficient solutions of the IOP} (\ref{IOP}). In the conventional gradient descent technique, to find a minimizer,  we move sequentially along descent directions. Likewise, for IOP, to find an efficient solution we attempt to move  sequentially along efficient-directions, defined below.
\begin{dfn}\label{ded}
(\emph{Efficient-direction}). Let $\mathcal{X}\subseteq\mathbb{R}^n$. A direction $d\in \mathbb{R}^n$ is said to be an efficient-direction of an IVF $\textbf{F}:\mathcal{X}\rightarrow I(\mathbb{R})$ at $\bar{x} \in \mathcal{X}$ if there exists a $\delta~>~0$ such that
\begin{enumerate}[(i)]
\item\label{ced1} $\textbf{F} (\bar{x}) ~\npreceq~ \textbf{F}(\bar{x}+\lambda d) ~\text{for all}~\lambda \in (0,\delta),$

\item\label{ced2} there also exists a point $x'=\bar{x}+\alpha d$ with $\alpha \in (0,\delta)$ and a positive real number $\delta'~\leq~ \alpha$ such that
     \[
      \textbf{F} (x'+\lambda d) ~\nprec~ \textbf{F}(x')~\text{for all}~\lambda \in (-\delta',\delta').
     \]
     The point $x'$ is known as an efficient point of $\textbf{F}$ in the direction $d$.
\end{enumerate}
\end{dfn}
In Figure \ref{fed}, the points $\bar{x}$ and $x'$, the direction $d$, and the nonnegative real numbers $\delta$ and $\delta'$ of Definition \ref{ded} are illustrated on $\mathbb{R}^2$ plane ($n = 2$).
\begin{figure}[!h]
\begin{center}
\includegraphics[scale=.7]{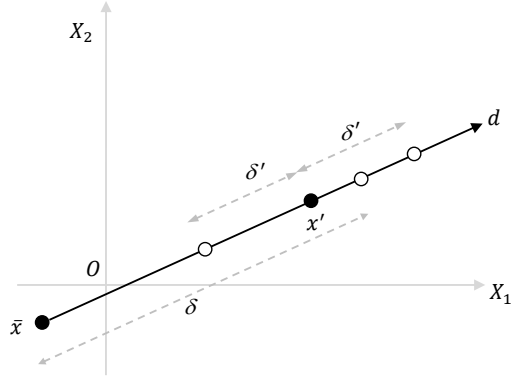}
\caption{Locations of $\bar{x}$ and $x'$ in Definition \ref{ded}}\label{fed}
\end{center}
\end{figure}
\begin{rmrk}
One may think that only the condition (\ref{ced1}) of Definition \ref{ded} is sufficient to define an efficient-direction. However, it is not true in general. Because, for $\textbf{A}$, $\textbf{B}$ and $\textbf{C}\in I(\mathbb{R})$,
\[
\textbf{A}~\npreceq~ \textbf{B}~~\text{and}~~ \textbf{B}~\npreceq~ \textbf{C}~\not\Longrightarrow~ \textbf{A}~\npreceq~ \textbf{C}~~\text{in general}.
\]
For instance, consider
\[
\textbf{A}=[4, 6],~~\textbf{B}=[2, 10]~~\text{and}~~ \textbf{C}=[5, 7].
\]
We, then, see that
\[
\textbf{A}~\npreceq~ \textbf{B}~~\text{and}~~ \textbf{B}~\npreceq~ \textbf{C}~~\text{but}~~ \textbf{A}~\prec~ \textbf{C}.
\]
That is why the condition (\ref{ced2}) of Definition \ref{ded} is necessary to define an efficient direction.
\end{rmrk}
\begin{rmrk} For the degenerate case of the IVF $\textbf{F}$, i.e., for $\underline{f}(x)=\overline{f}(x)=f(x)$ for all $x\in\mathcal{X}$, Definition \ref{ded} reduces to the following. A direction $d\in \mathbb{R}^n$ is said to be an efficient-direction of $f$ if there exists a $\delta~>~0$ such that
\[
f(\bar{x})~>~f(\bar{x}+\lambda d)~~\text{for all}~~\lambda \in (0,\delta).
\]
Thus, an efficient direction for a degenerate IVF is a descent direction.
\end{rmrk}
\begin{thm}\label{tndd1}
Let $\textbf{F}$ be a $gH$-differentiable IVF on a nonempty subset $\mathcal{X}$ of $\mathbb{R}^n$. Then, every direction $d\in \mathbb{R}^n$ that satisfies
\begin{equation}\label{endd1a}
\textbf{0}~\npreceq~ d^T \odot \nabla\textbf{F}(\bar{x}),
\end{equation}
is an efficient-direction of $\textbf{F}$ at $\bar{x}\in \mathcal{X}$, where the corresponding efficient point $x'$ is provided by
\[
x'=\bar{x}+\alpha' d~~\text{with}~~\alpha'=\argeff_{\alpha\in \mathbb{R}_+}~\textbf{F}(\bar{x}+\alpha d),
\]
where by `$\argeff$' of $\textbf{F}(\bar{x}+\alpha d)$, we mean a point $\alpha'$ such that
\[
\textbf{F}(\bar{x}+\alpha d)~\nprec~ \textbf{F}(\bar{x}+\alpha' d)~~\text{for all}~~\alpha\in\mathbb{R}.
\]
\end{thm}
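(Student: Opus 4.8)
The plan is to verify directly the two defining requirements of an efficient-direction in Definition \ref{ded} for the given $d$, namely the one-sided non-domination requirement (\ref{ced1}) and the existence of an efficient point satisfying (\ref{ced2}). Throughout, the engine will be the directional characterization of $gH$-differentiability: by Lemma \ref{ld1} combined with Theorem \ref{td1},
\[
\lim_{\lambda\to 0+}\tfrac{1}{\lambda}\odot\left(\textbf{F}(\bar{x}+\lambda d)\ominus_{gH}\textbf{F}(\bar{x})\right)=\textbf{L}_{\bar{x}}(d)=d^T\odot\nabla\textbf{F}(\bar{x}),
\]
so that the hypothesis $\textbf{0}\npreceq d^T\odot\nabla\textbf{F}(\bar{x})$ is read as information about this limiting interval.

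To establish condition (\ref{ced1}), I would first recast the order relation in a form stable under the limit. Writing $\textbf{B}(\lambda):=\textbf{F}(\bar{x}+\lambda d)\ominus_{gH}\textbf{F}(\bar{x})$, I would use the negation of Lemma \ref{ldr1}(\ref{part1}) together with Lemma \ref{ldr2} and the anti-symmetry $\textbf{F}(\bar{x})\ominus_{gH}\textbf{F}(\bar{x}+\lambda d)=(-1)\odot\textbf{B}(\lambda)$ (immediate from the definition of $\ominus_{gH}$) to obtain the equivalence
\[
\textbf{F}(\bar{x})\npreceq\textbf{F}(\bar{x}+\lambda d)\iff\textbf{0}\npreceq\textbf{B}(\lambda).
\]
The crucial observation is that for any $\textbf{A}=[\underline{a},\overline{a}]$ one has $\textbf{0}\preceq\textbf{A}\iff\underline{a}\ge 0$, so that $\textbf{0}\npreceq\textbf{A}$ is precisely the \emph{open} condition $\underline{a}<0$. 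Since multiplication by the positive scalar $\tfrac{1}{\lambda}$ leaves the sign of the lower endpoint unchanged, $\textbf{0}\npreceq\textbf{B}(\lambda)\iff\textbf{0}\npreceq\tfrac{1}{\lambda}\odot\textbf{B}(\lambda)$. Because $\tfrac{1}{\lambda}\odot\textbf{B}(\lambda)\to d^T\odot\nabla\textbf{F}(\bar{x})$, whose lower endpoint is strictly negative by hypothesis, endpoint convergence forces the lower endpoint of $\tfrac{1}{\lambda}\odot\textbf{B}(\lambda)$ to stay negative for all sufficiently small $\lambda>0$. This produces a $\delta_1>0$ with $\textbf{0}\npreceq\textbf{B}(\lambda)$, and hence condition (\ref{ced1}), on $(0,\delta_1)$.

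For condition (\ref{ced2}), I would simply invoke the defining property of the efficient point $x'=\bar{x}+\alpha' d$ with $\alpha'=\argeff_{\alpha\in\mathbb{R}_+}\textbf{F}(\bar{x}+\alpha d)$: by definition, $\textbf{F}(\bar{x}+\alpha d)\nprec\textbf{F}(\bar{x}+\alpha' d)$ for every $\alpha\in\mathbb{R}$. Substituting $\alpha=\alpha'+\lambda$ yields $\textbf{F}(x'+\lambda d)\nprec\textbf{F}(x')$ for all $\lambda\in\mathbb{R}$, so the non-domination demanded in (\ref{ced2}) holds on any symmetric neighborhood, in particular for any $\delta'$ with $0<\delta'\le\alpha'$. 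I would then assemble the pieces by enlarging the $\delta$ of Definition \ref{ded} to contain $\alpha'$ (so that $\alpha'\in(0,\delta)$) while retaining the scope $(0,\delta_1)$ for (\ref{ced1}), and record $\delta'=\alpha'$.

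The routine part will be condition (\ref{ced2}), which is immediate once the meaning of $\argeff$ is unfolded. The main obstacle is condition (\ref{ced1}): the careful passage from the interval-order hypothesis to endpoint (sign) information, the correct use of the anti-symmetry of $\ominus_{gH}$ and of Lemmas \ref{ldr1} and \ref{ldr2} to flip the relation, and—most delicately—the justification that the \emph{strict} (open) condition $\textbf{0}\npreceq$ is preserved under the limit, which hinges on identifying $\textbf{0}\npreceq\textbf{A}$ with the strict sign condition $\underline{a}<0$ rather than a non-strict one. A secondary bookkeeping point is reconciling the single $\delta$ of Definition \ref{ded} with the globally-defined $\alpha'$.
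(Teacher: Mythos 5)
Your proposal is correct and takes essentially the same route as the paper's proof: both rest on the directional limit $\lim_{\lambda\to 0+}\tfrac{1}{\lambda}\odot\left(\textbf{F}(\bar{x}+\lambda d)\ominus_{gH}\textbf{F}(\bar{x})\right)=d^T\odot\nabla\textbf{F}(\bar{x})$ supplied by Lemma \ref{ld1} and Theorem \ref{td1}, read the hypothesis (\ref{endd1a}) as strict negativity of the lower endpoint of this limit, conclude that this negativity persists for all sufficiently small $\lambda>0$ (giving condition (\ref{ced1})), and verify condition (\ref{ced2}) by unfolding the defining property of $\argeff$ with the substitution $\lambda=\alpha-\alpha'$. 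The only substantive difference is organizational: where the paper splits into two cases according to which endpoint difference quotient attains the minimum in (\ref{endd4}) and works with $\underline{f}$ and $\overline{f}$ separately, you stay at the interval level via the equivalence $\textbf{0}~\npreceq~\textbf{A}\Longleftrightarrow\underline{a}<0$ together with Lemmas \ref{ldr1} and \ref{ldr2}, which collapses the case analysis; and your closing worry about fitting $\alpha'$ inside the single $\delta$ of Definition \ref{ded} flags a bookkeeping point that the paper's own proof leaves equally untreated.
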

\begin{proof}
Let $d\in \mathbb{R}^n$ be a direction that satisfies the relation (\ref{endd1a}). Since $\textbf{F}$ is $gH$-differentiable at $\bar{x}\in \mathcal{X}$, by Lemma \ref{ld1} and Theorem \ref{td1}, we have
\[
\lim_{\lambda\rightarrow 0}\frac{1}{\lambda} \odot \left(\textbf{F}(\bar{x}+\lambda d)\ominus_{gH} \textbf{F}(\bar{x})\right)= d^T \odot \nabla\textbf{F}(\bar{x}),
\]
which implies
\[
\lim_{\lambda\rightarrow 0+}\frac{1}{\lambda} \odot \left(\textbf{F}(\bar{x}+\lambda d)\ominus_{gH} \textbf{F}(\bar{x})\right)= d^T \odot \nabla\textbf{F}(\bar{x}).
\]
Due to the relation (\ref{endd1a}), the last equation yields
\begin{equation}\label{endd3}
\textbf{0} ~\npreceq~ \lim_{\lambda\rightarrow 0+}\frac{1}{\lambda} \odot \left(\textbf{F}(\bar{x}+\lambda d)\ominus_{gH} \textbf{F}(\bar{x})\right).
\end{equation}
Therefore,
\begingroup\allowdisplaybreaks\begin{align*}
& [0, 0] ~\npreceq~ \lim_{\lambda\rightarrow 0+}\frac{1}{\lambda} \odot\Big[\min \left\{\underline{f}(\bar{x}+\lambda d)-\underline{f}(\bar{x}), \overline{f}(\bar{x}+\lambda d)-\overline{f}(\bar{x})\right\},\\
&~~~~~~~~~~~~~~~~~~~~~~~~~\max \left\{\underline{f}(\bar{x}+\lambda d)-\underline{f}(\bar{x}), \overline{f}(\bar{x}+\lambda d)-\overline{f}(\bar{x})\right\}\Big]\\
\text{or, } & [0, 0] ~\npreceq~ \bigg[\min \left\{\lim_{\lambda\rightarrow 0+}\frac{1}{\lambda} \left(\underline{f}(\bar{x}+\lambda d)-\underline{f}(\bar{x})\right), \lim_{\lambda\rightarrow 0+}\frac{1}{\lambda}\left(\overline{f}(\bar{x}+\lambda d)-\overline{f}(\bar{x})\right)\right\},\\
&~~~~~~~~~~~~~\max \left\{\lim_{\lambda\rightarrow 0+}\frac{1}{\lambda}\left(\underline{f}(\bar{x}+\lambda d)-\underline{f}(\bar{x})\right), \lim_{\lambda\rightarrow 0+}\frac{1}{\lambda}\left(\overline{f}(\bar{x}+\lambda d)-\overline{f}(\bar{x})\right)\right\}\bigg],
\end{align*}\endgroup
which implies
\begin{equation}\label{endd4}
\min \left\{\lim_{\lambda\rightarrow 0+}\frac{1}{\lambda} \left(\underline{f}(\bar{x}+\lambda d)-\underline{f}(\bar{x})\right), \lim_{\lambda\rightarrow 0+}\frac{1}{\lambda}\left(\overline{f}(\bar{x}+\lambda d)-\overline{f}(\bar{x})\right)\right\}~<~0.
\end{equation}
Thus, we have following two cases.
\begin{enumerate}[$\bullet$ \textbf{Case} 1.]
\item\label{cs1tndd1}
Let
\begingroup\allowdisplaybreaks\begin{align*}
& \min \left\{\lim_{\lambda\rightarrow 0+}\frac{1}{\lambda} \left(\underline{f}(\bar{x}+\lambda d)-\underline{f}(\bar{x})\right), \lim_{\lambda\rightarrow 0+}\frac{1}{\lambda}\left(\overline{f}(\bar{x}+\lambda d)-\overline{f}(\bar{x})\right)\right\}\\
&~~~~~~~~~~~~~~~~~=\lim_{\lambda\rightarrow 0+}\frac{1}{\lambda} \left(\underline{f}(\bar{x}+\lambda d)-\underline{f}(\bar{x})\right).
\end{align*}\endgroup
Then, by the equation (\ref{endd4}), we obtain
$
\lim_{\lambda\rightarrow 0+}\frac{1}{\lambda} \left(\underline{f}(\bar{x}+\lambda d)-\underline{f}(\bar{x})\right)~<~0.
$
Therefore, there exists a $\delta_1~>~0$ such that for all $\lambda\in (0, \delta_1)$,
\begingroup\allowdisplaybreaks\begin{align*}
& \frac{1}{\lambda} \left(\underline{f}(\bar{x}+\lambda d)-\underline{f}(\bar{x})\right)~<~0\\
\text{or, } & \underline{f}(\bar{x}+\lambda d)-\underline{f}(\bar{x})~<~0\\
\text{or, } & \underline{f}(\bar{x}+\lambda d)~<~\underline{f}(\bar{x})\\
\text{or, } & \left[\underline{f}(\bar{x}), \overline{f}(\bar{x})\right]~\npreceq~ \left[\underline{f}(\bar{x}+\lambda d), \overline{f}(\bar{x}+\lambda d)\right].
\end{align*}\endgroup

\item\label{cs2tndd1} Let
\begingroup\allowdisplaybreaks\begin{align*}
& \min \left\{\lim_{\lambda\rightarrow 0+}\frac{1}{\lambda} \left(\underline{f}(\bar{x}+\lambda d)-\underline{f}(\bar{x})\right), \lim_{\lambda\rightarrow 0+}\frac{1}{\lambda}\left(\overline{f}(\bar{x}+\lambda d)-\overline{f}(\bar{x})\right)\right\}\\
&~~~~~~~~~~~~~~~~~=\lim_{\lambda\rightarrow 0+}\frac{1}{\lambda}\left(\overline{f}(\bar{x}+\lambda d)-\overline{f}(\bar{x})\right).
\end{align*}\endgroup
Then, by the equation (\ref{endd4}), we get
\[
\lim_{\lambda\rightarrow 0+}\frac{1}{\lambda} \left(\overline{f}(\bar{x}+\lambda d)-\overline{f}(\bar{x})\right)~<~0.
\]
Thus, there exists a $\delta_2~>~0$ such that for $\lambda\in (0, \delta_2)$,
\begingroup\allowdisplaybreaks\begin{align*}
& \frac{1}{\lambda} \left(\overline{f}(\bar{x}+\lambda d)-\overline{f}(\bar{x})\right)~<~0\\
\text{or, } & \overline{f}(\bar{x}+\lambda d)-\overline{f}(\bar{x})~<~0\\
\text{or, } & \overline{f}(\bar{x}+\lambda d)~<~\overline{f}(\bar{x})\\
\text{or, } & \left[\underline{f}(\bar{x}), \overline{f}(\bar{x})\right]~\npreceq~ \left[\underline{f}(\bar{x}+\lambda d), \overline{f}(\bar{x}+\lambda d)\right]
\end{align*}\endgroup
\end{enumerate}

\noindent Choosing $\delta=\min \{\delta_1, \delta_2\}$, from Case \ref{cs1tndd1} and Case \ref{cs2tndd1}, we see that for all $\lambda\in (0, \delta)$,
\begingroup\allowdisplaybreaks\begin{align*}
&\left[\underline{f}(\bar{x}), \overline{f}(\bar{x})\right]~\npreceq~ \left[\underline{f}(\bar{x}+\lambda d), \overline{f}(\bar{x}+\lambda d)\right]\\
\text{or, } & \textbf{F}(\bar{x})~\npreceq~ \textbf{F}(\bar{x}+\lambda d).
\end{align*}\endgroup
Hence, $d$ satisfies the condition (\ref{ced1}) of Definition \ref{ded} at $\bar{x}$.\\ \\ 
Further, let us choose an $\alpha'$ such that
\[
\alpha'=\argeff_{\alpha\in \mathbb{R}_+}~\textbf{F}(\bar{x}+\alpha d).
\]
Therefore, there exists a $\bar{\delta}~>~0$ such that for all $\alpha \in (\alpha'-\bar{\delta}, \alpha'+\bar{\delta})$,
\[
\textbf{F}(\bar{x}+\alpha d) ~\nprec~ \textbf{F}(\bar{x}+\alpha' d).
\]
Considering $\lambda=\alpha-\alpha'$, we have
\begingroup\allowdisplaybreaks\begin{align*}
& \textbf{F}(\bar{x}+\alpha' d+\lambda d) ~\nprec~ \textbf{F}(\bar{x}+\alpha' d)\\
\text{or, } & \textbf{F}(x'+\lambda d) ~\nprec~ \textbf{F}(x'),
\end{align*}\endgroup
where $x'= \bar{x}+\alpha' d$. Choosing $\delta'=\min\{\alpha', \bar{\delta}\}$, we have
\[
\textbf{F}(x'+\lambda d) ~\nprec~ \textbf{F}(x')~~\text{for all}~~\lambda\in (-\delta', \delta').
\]
Therefore, $d$ satisfies the condition (\ref{ced2}) of Definition \ref{ded} at $\bar{x}$. Hence, $d$ is an efficient-direction at $\bar{x}$.
\end{proof}%
\begin{rmrk}\label{ngd1}
A question may arise here: in the definition of efficient-direction (Definition \ref{ded}), whether or not the condition (\ref{ced1}) can be replaced by
\begin{equation}\label{endd5}
\textbf{F}(\bar{x}+\lambda d)~\prec~ \textbf{F} (\bar{x}) ~\text{for all}~\lambda \in (0,\delta)\text{?}
\end{equation}
To answer, we note that if we choose the relation (\ref{endd5}) in place of the condition (\ref{ced1}) of Definition \ref{ded}, then in the same way of proving Theorem \ref{tndd1} it can be proved that any $d\in\mathbb{R}^n$ that satisfies
\begin{equation}\label{endd6}
d^T\odot\nabla\textbf{F}(\bar{x})~\prec~ \textbf{0}
\end{equation}
also holds the relation (\ref{endd5}) and vice versa for a $gH$-differentiable IVF $F$. However, the relation (\ref{endd1a}) is more general than the relation (\ref{endd6}). Because there are some directions, along which there exists an efficient solution of an IOP, satisfy the relation (\ref{endd1a}) but do not satisfy the relation (\ref{endd6}).\\

\noindent For instance, consider the IOP:
\begin{equation}\label{ex1aiop}
\min_{x\in\mathcal{X}}\textbf{F}(x)=[-1, 2]\odot x_1^2 \oplus [3, 4]\odot x_2^2,
\end{equation}
where $\mathcal{X}=[-10, 10]\times [-10, 10]\subseteq\mathbb{R}^2$.\\

\noindent In what follows, we show that $(\bar{x}_1, 0)^T\in \mathcal{X}$ is an efficient solution of the IOP (\ref{ex1aiop}) for any $\bar{x}_1\in[-10, 10]$. On contrary, let there exist two nonzero numbers $h_1$ and $h_2$ with $\left(\bar{x}_1+h_1, h_2\right)^T\in \mathcal{X}$ such that
\begingroup\allowdisplaybreaks\begin{align*}
& \textbf{F}(\bar{x}_1+h_1, h_2) ~\prec~ \textbf{F}(\bar{x}_1, 0)\\
\text{or, } & [-1, 2]\odot (\bar{x}_1+h_1)^2 \oplus [3, 4]h_2^2 ~\prec~ [-1, 2]\odot \bar{x}_1^2\\
\text{or, } & [-\bar{x}_1^2-2\bar{x}_1h_1-h_1^2+3h_2^2, 2x_1^2+4\bar{x}_1h+2h_1^2+4h_2^2] ~\prec~ [-\bar{x}_1^2, 2\bar{x}_1^2].
\end{align*}\endgroup
This implies
\[
-2x_1h_1-h_1^2+3h_2^2~\leq~ 0~~\text{and}~~ 4x_1h_1+2h_1^2+4h_2^2~\leq~ 0.
\]
Hence, $h_2^2~\leq~ 0$, which is not possible as $h_2~\neq~ 0$. So, there does not exist any $x\in\mathcal{X}$, which strictly dominates any $(\bar{x}_1, 0)^T\in \mathcal{X}$. Thus, $(\bar{x}_1, 0)^T$
is an efficient solution of the IOP (\ref{ex1aiop}).\\

\noindent Now we choose a point $\hat{x}=(3, 2)^T$ and two directions $d'=(1, -2)^T$ and $d''=(5, -2)^T$. We also choose an $\alpha=1$. As
\[
\textbf{F}(3, 0)=[-1, 2]~\prec~ [5, 10]=\textbf{F}(3, 2)=\textbf{F}(\hat{x}),
\]
$\hat{x}$ is not an efficient solution of the IOP (\ref{ex1aiop}). But the points $x'=\hat{x}+\alpha d'=(4, 0)^T$ and $x''=\hat{x}+\alpha d''=(8, 0)^T$ both are efficient solutions of the IOP (\ref{ex1aiop}). Therefore, both $d'$ and $d''$ are efficient-directions of $\textbf{F}$ at $\hat{x}$. Further, as the $gH$-gradient of $\textbf{F}$ is
\[
\nabla \textbf{F}(x)=(D_1\textbf{F}(x), D_2\textbf{F}(x))^T=\left ([-2, 4]\odot x_1,~[6, 8]\odot x_2\right )^T,
\]
we have $\nabla\textbf{F}(\hat{x})=([-6, 12], [12, 16])^T$. Therefore,
\begingroup\allowdisplaybreaks\begin{align*}
& d'\odot\nabla\textbf{F}(\hat{x})=[-6, 12]\oplus[-32, -24]=[-38, -12]\\
~\Longrightarrow~ &d'\odot\nabla\textbf{F}(\hat{x})~\prec~ \textbf{0}~~\text{and also,}~~\textbf{0}~\npreceq~ d'\odot\nabla\textbf{F}(\hat{x}).
\end{align*}\endgroup
Again,
\begingroup\allowdisplaybreaks\begin{align*}
& d''\odot\nabla\textbf{F}(\hat{x})=[-30, 60]\oplus[-32, -24]=[-38,36]\\
~\Longrightarrow~ &\textbf{0}~\npreceq~ d''\odot\nabla\textbf{F}(\hat{x})
\end{align*}\endgroup
Hence, it is clear that although both $d'$ and $d''$ are efficient-directions of the IVF $\textbf{F}$ at the point $\hat{x}$ and satisfy the relation (\ref{endd1a}) but only $d'$ satisfies the relation (\ref{endd6}). So, the condition (\ref{ced1}) of Definition \ref{ded} is more general than the condition (\ref{endd6}).
\end{rmrk}
%
%
%

\subsection{General $gH$-gradient Efficient-Direction Method for Interval Optimization Problems}

\noindent To produce the efficient solutions of the IOP (\ref{IOP}) we provide Algorithm \ref{algoes1}. As Algorithm \ref{algoes1}
\begin{enumerate}[(i)]
\item uses $gH$-gradient at every iterative step  and
\item endeavors to find an efficient solution by sequentially moving along efficient-directions,
\end{enumerate}
we name the method as general $gH$-gradient efficient-direction method. The term `general' is due to the reason that we do not choose a special or a particular efficient-direction $d_k$; any general $d_k$ that satisfies $\textbf{0}~\npreceq~ d_k^T\odot\nabla\textbf{F}(x_k)$ will lead to reaching at an efficient point.
\begin{algorithm}[H]
    \caption{General $gH$-gradient efficient-direction method for IOP}\label{algoes1}
    \begin{algorithmic}[1]
      \REQUIRE Given the initial point $x_{0}$ and the IVF $\textbf{F}:\mathcal{X}(\subseteq\mathbb{R}^n)\rightarrow I(\mathbb{R})$.
      \STATE Set $k = 0$.
      \STATE If $0\in d^T \odot \nabla\textbf{F}(x_{k})$ for all $d\in \mathbb{R}^n$, then Return $x_{k}$ as an efficient-solution and Stop.              Otherwise go to Step 3.
      \STATE Find a $d_{k}$ such that $\textbf{0}~\npreceq~ d_{k}^T \odot \nabla\textbf{F}(x_{k})$ and an $\alpha_{k}$ such that
            \[
            \alpha_{k}=\argeff_{\alpha\in \mathbb{R}_+}~\textbf{F}(x_{k}+\alpha d_{k})
            \]
      \STATE Calculate
             \[
             x_{k+1} = x_{k} + \alpha_{k}d_{k}.
             \]
      \STATE Set $k \leftarrow k + 1$ and go to Step 2.
    \end{algorithmic}
  \end{algorithm}

\noindent In the next, we give the convergence analysis of the Algorithm \ref{algoes1}. Towards the convergent analysis, we need the following notions of the algorithmic map and the closed map regarding IVFs.

{\color{red} \begin{dfn}(\emph{Algorithmic map} \cite{bazara1993}). Let $\mathcal{X}$ be a nonempty subset of $\mathbb{R}^n$. An algorithmic map  $\mathcal{A}$ of an algorithm is a point-to-set map on its domain $\mathcal{X}$ which describes the iterating process of the algorithm such that if the sequence $\{x_k\}$ is generated by the algorithm then $x_{k+1} \in \mathcal{A}(x_k)$.
\end{dfn}
\begin{rmrk}
The map $\mathcal{A}:\mathcal{X} \rightarrow \mathcal{X}$ that generates the sequence $\{x_k\}$ in Algorithm \ref{algoes1} with $\textbf{F}(x_k)~\npreceq~ \textbf{F}(x_{k+1})$ is an algorithmic map.
\end{rmrk}
}
\begin{dfn} (\emph{Closed map} \cite{bazara1993}). A point-to-set map $\mathcal{A}$ from a nonempty subset $\mathcal{X}$ of $\mathbb{R}^n$ to a subset $\mathcal{Y}$ of $\mathbb{R}^m$ is said to be closed at $x \in \mathcal{X}$ if for any sequences $\{x_k\}$ and $\{y_k\}$ such that
\[
x_k \to x ~~\text{and}~~y_k \to y,~~\text{where}~~x_k\in \mathcal{X}~~\text{and}~~y_k \in \mathcal{A}(x_k),
\]
we have $y\in \mathcal{A}(x)$.
\end{dfn}
\begin{thm}\label{tcm}
Let $\textbf{F}: \mathbb{R}^n \rightarrow I(\mathbb{R})$ be an IVF on a nonempty open subset $\mathcal{X}$ of $\mathbb{R}^n$ and $\textbf{D}$ be an element of $I(\mathbb{R}_+)$. Define a point-to-set map $\mathcal{L}:\mathbb{R}^n \times \mathbb{R}^n\rightarrow \mathbb{R}^n$ by $\mathcal{L}(x, d)=\{y:y=x+\bar{\lambda}d\}$ for some $\bar{\lambda}\in \textbf{D}$. Let $\textbf{F}(x+\lambda d)~\nprec~ \textbf{F}(y)$ for each $\lambda \in \textbf{D}$ and $d~\neq~ 0$. If $\textbf{F}$ is $gH$-continuous at $x$ and $d~\neq~ 0$, then $\mathcal{L}$ is closed at $(x, d)$.
\end{thm}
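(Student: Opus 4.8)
The plan is to prove closedness of the point-to-set map $\mathcal{L}$ at $(x,d)$ directly from the definition. I would take arbitrary sequences $(x_k, d_k) \to (x, d)$ with $y_k \in \mathcal{L}(x_k, d_k)$ and $y_k \to y$, and show $y \in \mathcal{L}(x, d)$. By the definition of $\mathcal{L}$, for each $k$ there exists some $\bar{\lambda}_k \in \textbf{D}$ such that $y_k = x_k + \bar{\lambda}_k d_k$. The goal is to produce a single $\bar{\lambda} \in \textbf{D}$ with $y = x + \bar{\lambda} d$ and such that the efficiency condition $\textbf{F}(x + \lambda d) \nprec \textbf{F}(y)$ holds for all $\lambda \in \textbf{D}$.

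First I would extract the scalar multipliers. Since $\textbf{D} \in I(\mathbb{R}_+)$ is a closed bounded interval, the sequence $\{\bar{\lambda}_k\}$ lies in a compact subset of $\mathbb{R}$, so by the Bolzano--Weierstrass theorem it has a convergent subsequence $\bar{\lambda}_{k_j} \to \bar{\lambda}$, and closedness of $\textbf{D}$ gives $\bar{\lambda} \in \textbf{D}$. Passing to this subsequence in the relation $y_{k_j} = x_{k_j} + \bar{\lambda}_{k_j} d_{k_j}$ and using $x_{k_j} \to x$, $d_{k_j} \to d$, $y_{k_j} \to y$ together with continuity of the scalar operations in $\mathbb{R}^n$, I obtain $y = x + \bar{\lambda} d$. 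This already shows $y$ has the required form for membership in $\mathcal{L}(x, d)$.

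It remains to verify the defining inequality of $\mathcal{L}$ at the limit, namely that $\textbf{F}(x + \lambda d) \nprec \textbf{F}(y)$ for every $\lambda \in \textbf{D}$. Here I would invoke the $gH$-continuity hypothesis on $\textbf{F}$ at $x$, using the sequential criterion of Lemma \ref{lsc}: since $y_k \to y$, we get $\textbf{F}(y_k) \to \textbf{F}(y)$ in $I(\mathbb{R})$. For each fixed $\lambda \in \textbf{D}$, the hypothesis guarantees $\textbf{F}(x + \lambda d_k) \nprec \textbf{F}(y_k)$ for each $k$ (applying the given condition at the point $x_k$ along direction $d_k$, or directly along $d$), and then I would pass to the limit. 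Translating $\nprec$ into the lower/upper boundary functions via the dominance relation, the non-strict-dominance relation is preserved under limits because it is defined by non-strict inequalities, which are closed conditions; hence $\textbf{F}(x + \lambda d) \nprec \textbf{F}(y)$.

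The main obstacle I anticipate is the careful handling of the efficiency condition in the limit, since $\nprec$ is the negation of strict dominance and one must check that this negation is stable under the convergence $\textbf{F}(x + \lambda d_k) \to \textbf{F}(x + \lambda d)$ and $\textbf{F}(y_k) \to \textbf{F}(y)$. Concretely, $\textbf{A} \nprec \textbf{B}$ means it is not the case that ($\underline{a} \le \underline{b}$ and $\overline{a} < \overline{b}$) or ($\underline{a} < \underline{b}$ and $\overline{a} \le \overline{b}$); I would rephrase this as a closed condition on the four endpoints (for instance, $\underline{a} \ge \underline{b}$ or $\overline{a} \ge \overline{b}$) so that taking limits of the converging endpoint sequences preserves it. A secondary subtlety is ensuring the condition holds for \emph{all} $\lambda \in \textbf{D}$ simultaneously and not merely along the subsequence; this is resolved by noting $\lambda$ is an arbitrary fixed element of $\textbf{D}$ while the subsequence extraction was on the multipliers $\bar{\lambda}_k$, so the two indexings are independent and the argument goes through for every such $\lambda$.
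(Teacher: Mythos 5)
Your proof follows the same route as the paper's: take $(x_k,d_k)\to(x,d)$ and $y_k\in\mathcal{L}(x_k,d_k)$ with $y_k\to y$, write $y_k=x_k+\lambda_k d_k$ for some $\lambda_k\in\textbf{D}$, obtain a limit multiplier $\bar{\lambda}\in\textbf{D}$ with $y=x+\bar{\lambda}d$, and then pass the non-dominance condition to the limit via the sequential criterion of Lemma \ref{lsc}. Your handling of the multipliers is a harmless variation: where you invoke compactness of $\textbf{D}$ and Bolzano--Weierstrass, the paper simply solves $\lambda_k=\lVert y_k-x_k\rVert/\lVert d_k\rVert$ (legitimate because $\lambda_k\geq 0$ and $d_k\neq 0$ for large $k$) and lets $k\to\infty$, so the full sequence of multipliers converges; both versions yield $y=x+\bar{\lambda}d$, and a subsequence suffices anyway since membership in $\mathcal{L}(x,d)$ only requires the existence of one $\bar{\lambda}$.

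The genuine gap is in the step you yourself flagged as the main obstacle. Your claim that $\textbf{A}\nprec\textbf{B}$ can be rewritten as the closed condition ``$\underline{a}\geq\underline{b}$ or $\overline{a}\geq\overline{b}$'' is incorrect: by the paper's definition of strict dominance, $\textbf{A}=[0,0]$ and $\textbf{B}=[0,1]$ satisfy $\underline{a}\leq\underline{b}$ and $\overline{a}<\overline{b}$, hence $\textbf{A}\prec\textbf{B}$, even though your condition $\underline{a}\geq\underline{b}$ holds. Worse, no closed reformulation can exist, because $\nprec$ is genuinely not preserved under endpoint convergence: take $\textbf{A}_k=[1/k,\,1]$ and $\textbf{B}=[0,2]$; then $\textbf{A}_k\nprec\textbf{B}$ for every $k$ (since $1/k>0=\underline{b}$), yet the limit $\textbf{A}=[0,1]$ satisfies $\textbf{A}\prec\textbf{B}$. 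Consequently, from $\textbf{F}(x_k+\lambda d_k)\nprec\textbf{F}(y_k)$ for all $k$ together with $\textbf{F}(x_k+\lambda d_k)\to\textbf{F}(x+\lambda d)$ and $\textbf{F}(y_k)\to\textbf{F}(y)$, one cannot in general conclude $\textbf{F}(x+\lambda d)\nprec\textbf{F}(y)$; an attempt by contradiction also breaks down, since of the two inequalities witnessing $\prec$ in the limit only the strict one transfers to large $k$. This is precisely where the classical real-valued closedness argument (where the negation of $<$ is the closed relation $\geq$) fails to carry over to the interval order. You should know that the paper's own proof passes over this very step with the word ``evidently'' and no justification, so your attempt is structurally identical to the published proof and shares its weakness; the difference is that you made the limit-passage claim explicit, which exposes that, as stated, it is false and would need either a strengthened hypothesis or a different argument to repair.
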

\begin{proof}
Let $\{(x_k, d_k)\}$ be a sequence such that $(x_k, d_k) \to (x, d)$. Let $\{y_k\}$, $y_k\in \mathcal{L}(x_k, d_k)$, be a sequence such that $y_k\to y$. To prove the theorem, we have to show that $y\in \mathcal{L}(x, d)$.\\

 It is to note that there exists $\lambda_k\in\textbf{D}$ such that $y_k=x_k+\lambda_k d_k$ for $k=1,\ 2,\ \ldots, \ n$. Since $d~\neq~ 0$, for large enough $k$ we have $d_k~\neq~ 0$. Then, $\lambda_k= \frac{\lVert y_k-x_k\rVert}{\lVert d_k \rVert}$.\\

 Taking the limit as $k\rightarrow \infty$ we have $\lambda_k \to \frac{\lVert y-x\rVert}{\lVert d \rVert}=\bar{\lambda}$, say. Hence, $y=x+\bar{\lambda}d$. Furthermore, since $\lambda_k \in \textbf{D} $ for each $k$ and $\textbf{D}$ is closed, $\bar{\lambda}\in \textbf{D}$.\\

 Therefore, as $\lambda \in \textbf{D}$, $\textbf{F}(x_k+\lambda d_k)~\nprec~ \textbf{F}(y_k)~~\text{for all }~k$, and $\textbf{F}$ is $gH$-continuous, by Lemma \ref{lsc}, evidently, we have $\textbf{F}(x+\lambda d)~\nprec~ \textbf{F}(y)$. Hence, $y\in \mathcal{L}(x, d)$ and so, $\mathcal{L}$ is closed.\\
\end{proof}
%
%
%
\begin{rmrk}\label{ngem1} As according to Theorem \ref{tcm}, the map $\mathcal{L}$ is closed, the composite map $\mathcal{A}=\mathcal{L}\circ\mathcal{D}$ will be closed if the direction generating map $\mathcal{D}:\mathbb{R}^n \rightarrow \mathbb{R}^{n}\times \mathbb{R}^{n}$, defined by $\mathcal{D}(x)=(x, d)$, is also closed.
\end{rmrk}
%
%
%
%
%
%
%
%
%
\begin{thm}\emph{(Convergence of general $gH$-gradient efficient method for IOP).}
Let $\textbf{F}$ be a $gH$-differentiable IVF on a nonempty open subset $\mathcal{X}$ of $\mathbb{R}^n$. Suppose $\Omega \subseteq \mathcal{X} $ be the set of all efficient points of $\textbf{F}$ and $\mathcal{A}:\mathcal{X} \rightarrow \mathcal{X}$ be an algorithmic map of Algorithm \ref{algoes1}. Suppose that the algorithm map $\mathcal{A}$ produces the sequence $\{x_k\}$, which converges at $\bar{x} \in \mathcal{X}$. Also, assume that there exists $M~>~0$ such that $\lVert d_k \rVert ~<~M$  for all $k$. If $\bar{d}$ is an accumulation point of ${d_k}$, then we have
\begin{equation}\label{endd2a}
0\in {\bar{d}}^T \odot \nabla\textbf{F}(\bar{x}).
\end{equation}
\end{thm}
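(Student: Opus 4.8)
The plan is to argue by contradiction. First I would fix a subsequence $\{d_{k_j}\}$ with $d_{k_j}\to\bar{d}$ (which exists by the boundedness hypothesis $\lVert d_k\rVert<M$); since the full sequence $\{x_k\}$ converges to $\bar{x}$, both $x_{k_j}\to\bar{x}$ and the successor iterates $x_{k_j+1}=x_{k_j}+\alpha_{k_j}d_{k_j}\to\bar{x}$. Writing $\bar{d}^T\odot\nabla\textbf{F}(\bar{x})=[\underline{c},\overline{c}]$, the claim $0\in\bar{d}^T\odot\nabla\textbf{F}(\bar{x})$ fails exactly when $\underline{c}>0$ or $\overline{c}<0$, and I would rule out each.

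To discard $\underline{c}>0$, I would pass the direction-selection rule of Step 3 of Algorithm \ref{algoes1} to the limit. Writing $d_k^T\odot\nabla\textbf{F}(x_k)=[\underline{c}_k,\overline{c}_k]$, the rule $\textbf{0}\npreceq d_k^T\odot\nabla\textbf{F}(x_k)$ is, by the definition of the dominance relation, equivalent to the scalar inequality $\underline{c}_k<0$. Using $gH$-continuity of the gradient together with the continuity of the product $\odot$ (Lemma \ref{lsc}), one has $\underline{c}_{k_j}\to\underline{c}$, so that $\underline{c}\le 0$; this already excludes $\underline{c}>0$ and leaves only the case $\overline{c}<0$.

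In that remaining case both endpoints are strictly negative, i.e. $\bar{d}^T\odot\nabla\textbf{F}(\bar{x})\prec\textbf{0}$. By the implication (\ref{endd6})$\Rightarrow$(\ref{endd5}) recorded in Remark \ref{ngd1} (established exactly as in Theorem \ref{tndd1}), $\bar{d}$ is a strict descent direction at $\bar{x}$; moreover, since the endpoints $\underline{c},\overline{c}$ are the minimum and maximum of the directional derivatives of $\underline{f}$ and $\overline{f}$ along $\bar{d}$ and $\overline{c}<0$, both directional derivatives are negative, so there exist $\lambda^{*}>0$ and a margin $\gamma>0$ with $\underline{f}(\bar{x}+\lambda^{*}\bar{d})<\underline{f}(\bar{x})-\gamma$ and $\overline{f}(\bar{x}+\lambda^{*}\bar{d})<\overline{f}(\bar{x})-\gamma$. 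Invoking the sequential criterion of $gH$-continuity (Lemma \ref{lsc}) and Lemma \ref{lc1}, the quantities $\underline{f}(x_{k_j}+\lambda^{*}d_{k_j})$, $\overline{f}(x_{k_j}+\lambda^{*}d_{k_j})$, $\underline{f}(x_{k_j+1})$ and $\overline{f}(x_{k_j+1})$ converge to $\underline{f}(\bar{x}+\lambda^{*}\bar{d})$, $\overline{f}(\bar{x}+\lambda^{*}\bar{d})$, $\underline{f}(\bar{x})$ and $\overline{f}(\bar{x})$ respectively, so that for all large $j$ the strict two-sided inequality $\textbf{F}(x_{k_j}+\lambda^{*}d_{k_j})\prec\textbf{F}(x_{k_j+1})$ holds. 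This contradicts the defining property of the efficient step $\alpha_{k_j}=\argeff_{\alpha\in\mathbb{R}_+}\textbf{F}(x_{k_j}+\alpha d_{k_j})$, namely $\textbf{F}(x_{k_j}+\alpha d_{k_j})\nprec\textbf{F}(x_{k_j+1})$ for every $\alpha\ge 0$, taken at $\alpha=\lambda^{*}$. Hence $\overline{c}\ge 0$, and with $\underline{c}\le 0$ this gives $0\in[\underline{c},\overline{c}]=\bar{d}^T\odot\nabla\textbf{F}(\bar{x})$.

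The step I expect to be the main obstacle is the passage to the limit of the order relations: none of $\prec$, $\nprec$, $\npreceq$ is preserved under convergence, so one cannot naively take limits in the efficiency condition $\textbf{F}(x_{k_j}+\lambda^{*}d_{k_j})\nprec\textbf{F}(x_{k_j+1})$. The device that resolves this is to exploit a uniform strict gap $\gamma>0$ in \emph{both} the lower and upper functions --- available precisely because the whole interval $\bar{d}^T\odot\nabla\textbf{F}(\bar{x})$ is strictly negative --- so that strict dominance survives the perturbations $x_{k_j}\neq\bar{x}$ and $d_{k_j}\neq\bar{d}$ and produces a genuine contradiction at finite large $j$ rather than only in the limit. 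A secondary point requiring care is the continuity invoked to transfer Step 3's selection rule to the limit, which rests on $gH$-continuity of $x\mapsto\nabla\textbf{F}(x)$, equivalently continuity of each partial $gH$-derivative $D_i\textbf{F}$.
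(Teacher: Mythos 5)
Your proof takes a genuinely different route from the paper's, but it contains one real gap: the elimination of the case $\underline{c}>0$. There you pass the selection rule $\textbf{0}\npreceq d_k^T\odot\nabla\textbf{F}(x_k)$ (correctly rewritten as $\underline{c}_k<0$) to the limit by invoking $gH$-continuity of $x\mapsto\nabla\textbf{F}(x)$. No such hypothesis is available: the theorem assumes only that $\textbf{F}$ is $gH$-differentiable, and, just as for real-valued functions, pointwise differentiability does not give continuity of the $gH$-gradient. Without it, the convergence $\underline{c}_{k_j}\to\underline{c}$ is unsupported, and the strict inequalities $\underline{c}_{k_j}<0$ alone say nothing about the limit interval. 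You flag this yourself as a ``secondary point requiring care,'' but it is in fact a missing hypothesis on which half of your case analysis rests. Fortunately the gap is repairable inside your own framework: the paper's definition of $\argeff$ (stated in Theorem \ref{tndd1}) demands $\textbf{F}(\bar{x}+\alpha d)\nprec\textbf{F}(\bar{x}+\alpha' d)$ for \emph{all} $\alpha\in\mathbb{R}$, not only $\alpha\geq 0$. If $\underline{c}>0$, the two-sided limit in Lemma \ref{ld1} shows that both $\underline{f}$ and $\overline{f}$ decrease with a uniform margin along $-\bar{d}$, so your own uniform-gap argument, applied at $\alpha=-\lambda^{*}$, contradicts the $\argeff$ property exactly as in your treatment of $\overline{c}<0$, using only continuity of $\underline{f}$ and $\overline{f}$ (which does follow from $gH$-differentiability, via Lemma \ref{lc1}). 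With that substitution your proof closes with no assumption on $\nabla\textbf{F}$.

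For comparison, the paper never examines endpoints: it splits into $\bar{d}=0$ (trivial) and $\bar{d}\neq 0$, and in the latter case argues that $0\notin\bar{d}^T\odot\nabla\textbf{F}(\bar{x})$ would make $\bar{x}$ inefficient by Theorem \ref{tes2}, while $x_k\to\bar{x}$, $x_{k+1}\in\mathcal{A}(x_k)$, $x_{k+1}\to\bar{x}$ and closedness of $\mathcal{A}$ (Remark \ref{ngem1}, built on Theorem \ref{tcm}) force $\bar{x}\in\mathcal{A}(\bar{x})$, whence the contradiction $\textbf{F}(\bar{x})\npreceq\textbf{F}(\bar{x})$. Note, however, that the paper's argument hides an assumption parallel to yours: Remark \ref{ngem1} asserts closedness of $\mathcal{A}=\mathcal{L}\circ\mathcal{D}$ only \emph{provided} the direction-generating map $\mathcal{D}$ is closed, and that closedness --- morally a continuity condition on the direction selection, the analogue of your gradient-continuity assumption --- is never verified. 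So your analysis of the case $\overline{c}<0$ is correct and in fact more self-contained than the paper's appeal to the closed-map machinery; the defect is confined to the case $\underline{c}>0$, and it is fixable as indicated.
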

\begin{proof}
Since $\{d_k\}$ is bounded, there exists an index set $K_1$ such that $\lim\limits_{k\in K_1}d_k=\bar{d}$ and $\lim\limits_{k\in K_1}x_k=\bar{x}$. We have the following two cases.
\begin{enumerate}[$\bullet$ \textbf{Case} 1.]
\item \emph{If $\bar{d}=0$}. Then, (\ref{endd2a}) is trivial.

\item \emph{If $\bar{d}~\neq~ 0$}. If possible, let us assume that the conclusion (\ref{endd2a}) is not true. Hence,
    \[
    0\notin {\bar{d}}^T \odot \nabla\textbf{F}(\bar{x}) \Longrightarrow \bar{x}\notin \Omega.
    \]

    Since, for $k\in K_1, x_k\to \bar{x}$, also $x_{k+1}\in \mathcal{A}(x_k)$ and $x_{k+1}\to \bar{x}$, therefore, $\bar{x}\in \mathcal{A}(\bar{x})$. As $\mathcal{A}$ is closed at $\bar{x}$ due to Remark \ref{ngem1}. Thus, $\textbf{F}(\bar{x})~\npreceq~ \textbf{F}(\bar{x})$, which is a contradiction. Hence,
    $
    0\in {\bar{d}}^T \odot \nabla\textbf{F}(\bar{x}).
    $
\end{enumerate}
\end{proof}
%
%
%

\subsection{$\mathcal{W}$-gradient Method for Interval Optimization Problems}

\noindent In this section, we develop a particular type of efficient-direction. Towards this, for two given numbers $w$, $w'\in [0, 1]$ with $w+w'=1$, we define a mapping $\mathcal{W}: I(\mathbb{R})^n\rightarrow \mathbb{R}^n$ by
\[
\mathcal{W}(\textbf{A}_1, \textbf{A}_2, \ldots, \textbf{A}_n)=(w\underline{a}_1+w'\overline{a}_1, w\underline{a}_2+w'\overline{a}_2, \ldots, w\underline{a}_n+w'\overline{a}_n)^T.
\]

\begin{rmrk}\label{rw1}
It is to observe that for any two elements $\bar{\textbf{A}}$, $\bar{\textbf{B}}$ in $I(\mathbb{R})^n$,
\[
\mathcal{W}(\bar{\textbf{A}}\oplus\bar{\textbf{B}})=\mathcal{W}(\bar{\textbf{A}})+\mathcal{W}(\bar{\textbf{B}}).
\]
\end{rmrk}
\begin{lem}\label{lw1}
For an interval $\textbf{A}=[\underline{a}, \overline{a}]$, $(w\underline{a}+w'\overline{a})\odot[\underline{a}, \overline{a}]~\nprec~ \textbf{0}$, where $w$, $w'\in [0, 1]$ with $w+w'=1$.
\end{lem}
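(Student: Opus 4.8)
The plan is to prove the inequality by a direct case analysis on the sign of the scalar $c := w\underline{a} + w'\overline{a}$, using the definition of the dominance relation $\prec$ and the fact that Moore's scalar multiplication $c \odot [\underline{a}, \overline{a}]$ equals $[c\underline{a}, c\overline{a}]$ when $c \geq 0$ and $[c\overline{a}, c\underline{a}]$ when $c < 0$. Recall that by Definition of the dominance relation, $\textbf{0} \prec \textbf{B}$ (equivalently, $c\odot\textbf{A}$ is \emph{strictly dominated} by $\textbf{0}$, written as the negation of $c\odot\textbf{A} \nprec \textbf{0}$) would require the lower and upper endpoints of $c\odot\textbf{A}$ to be nonnegative with at least one strictly positive. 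So to establish $c\odot\textbf{A} \nprec \textbf{0}$, I must show that it is \emph{not} the case that both endpoints of $c\odot\textbf{A}$ are $\geq 0$ with one strictly positive; equivalently, I must exhibit that the lower endpoint of $c\odot\textbf{A}$ is $\leq 0$.

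First I would observe that since $w, w' \in [0,1]$ with $w + w' = 1$, the number $c = w\underline{a} + w'\overline{a}$ is a convex combination of $\underline{a}$ and $\overline{a}$, hence $\underline{a} \leq c \leq \overline{a}$. This sandwiching is the crucial structural fact that drives the whole argument. Next I would split into two cases according to the sign of $c$. If $c \geq 0$, then $c\odot[\underline{a},\overline{a}] = [c\underline{a}, c\overline{a}]$, and its lower endpoint is $c\underline{a}$; since $\underline{a} \leq c$ and $c \geq 0$, I claim $c\underline{a} \leq 0$ is not automatic, so I must be careful here — the cleaner route is to note that $c\underline{a} \leq c \cdot c = c^2$ is the wrong comparison, so instead I should directly check that the interval $c\odot\textbf{A}$ cannot be strictly dominated by $\textbf{0}$ by showing $0$ lies between its endpoints or its lower endpoint is nonpositive. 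When $c \geq 0$, the product is $[c\underline{a}, c\overline{a}]$; since $\underline{a} \le c$, if $\underline a < 0$ then $c\underline a \le 0$, while if $\underline a \ge 0$ then $\overline a \ge \underline a \ge 0$ forces $c \ge 0$ consistently, and one checks $0 \in [c\underline a, c\overline a]$ precisely when $\underline a \le 0 \le \overline a$. The case $c < 0$ is symmetric with $c\odot[\underline a,\overline a] = [c\overline a, c\underline a]$.

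The cleanest unified argument, which I would actually write out, is this: the lower endpoint of $c\odot\textbf{A}$ equals $\min\{c\underline{a}, c\overline{a}\}$ and the upper endpoint equals $\max\{c\underline{a}, c\overline{a}\}$. I would show that the product $c\underline{a} \cdot c\overline{a} = c^2\,\underline{a}\,\overline{a}$ together with the sandwich $\underline a \le c \le \overline a$ forces $0$ to belong to $c\odot\textbf{A}$, i.e. $\min\{c\underline a, c\overline a\} \le 0 \le \max\{c\underline a, c\overline a\}$. Indeed, from $\underline a \le c \le \overline a$ one deduces $c\underline a \le c^2$ and $c^2 \le c\overline a$ when $c\ge 0$ (and the reversed inequalities when $c<0$), so in either case $c^2$ lies in the interval $c\odot\textbf{A}$; since $c^2 \ge 0$, the upper endpoint is $\ge 0$, and a parallel argument using $\underline a \le c \le \overline a$ multiplied through shows the lower endpoint is $\le 0$. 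Hence $0 \in c\odot\textbf{A}$, which immediately gives $c\odot\textbf{A} \nprec \textbf{0}$ by the definition of strict dominance (an interval containing $0$ cannot be strictly dominated by $\textbf{0}$). The main obstacle I anticipate is bookkeeping the sign of $c$ correctly so that the endpoint formula for $c\odot\textbf{A}$ is applied with the right orientation; once the sandwich $\underline a \le c \le \overline a$ is in hand, the sign-tracking in the two cases is routine but must be done carefully to avoid flipping an inequality.
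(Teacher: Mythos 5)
Your proposal has a genuine gap --- in fact three related errors, all traceable to reading the dominance relation in the wrong direction. In this paper, $\textbf{C}\prec\textbf{0}$ means $\textbf{0}$ is strictly dominated by $\textbf{C}$, i.e. ($\underline{c}\le 0$ and $\overline{c}<0$) or ($\underline{c}<0$ and $\overline{c}\le 0$); hence to prove $\textbf{C}\nprec\textbf{0}$ you must show $\underline{c}\ge 0$ \emph{or} $\overline{c}>0$. Your stated target, ``exhibit that the lower endpoint of $c\odot\textbf{A}$ is $\le 0$,'' is aimed at refuting $\textbf{0}\prec c\odot\textbf{A}$, not $c\odot\textbf{A}\prec\textbf{0}$; these are different relations, so hitting that target proves nothing about the lemma. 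Second, your central claim that the sandwich $\underline{a}\le c\le\overline{a}$ forces $0\in c\odot\textbf{A}$ is false: for $\textbf{A}=[1,2]$ and $w=w'=\tfrac{1}{2}$ one gets $c=\tfrac{3}{2}$ and $c\odot\textbf{A}=\left[\tfrac{3}{2},3\right]$, which does not contain $0$ (the lemma holds there because $\textbf{0}\preceq\textbf{C}$, i.e. the interval is entirely nonnegative). Only half of your sandwich argument is valid: $c\underline{a}\le c^2\le c\overline{a}$ for $c\ge 0$ (reversed for $c<0$) does give $\overline{c}\ge c^2\ge 0$, but the claimed ``parallel argument'' that the lower endpoint is $\le 0$ does not exist. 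Third, even at points where $0\in\textbf{C}$, your closing principle ``an interval containing $0$ cannot be strictly dominated by $\textbf{0}$'' fails under this paper's definition: $[-1,0]$ contains $0$, yet $[-1,0]\prec\textbf{0}$ because $-1<0$ and $0\le 0$. So the final inference would be invalid even if the containment claim were true; you would still have to rule out the boundary case $\underline{c}<0=\overline{c}$.

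The frustrating part is that the correct kernel of your argument is two lines away from a proof that is actually cleaner than the paper's. From $\overline{c}\ge c^2\ge 0$: if $c=0$ then $\textbf{C}=\textbf{0}\nprec\textbf{0}$; if $c\neq 0$ then $\overline{c}\ge c^2>0$, and no interval with strictly positive upper endpoint can satisfy $\textbf{C}\prec\textbf{0}$ (both disjuncts of the definition force $\overline{c}\le 0$). That finishes the lemma with no case analysis on the signs of $\underline{a}$ and $\overline{a}$. The paper instead argues by exactly such a sign split: when $\underline{a}$ and $\overline{a}$ have the same sign, $c\underline{a}\ge 0$ and $c\overline{a}\ge 0$, so $\textbf{0}\preceq\textbf{C}$; when $\underline{a}\le 0<\overline{a}$, either $c=0$ (so $\textbf{C}=\textbf{0}$) or the two products $c\underline{a}$, $c\overline{a}$ have opposite signs. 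Either route works, but as written your proof rests on an inverted reading of $\prec$, a false containment claim, and a false closing principle, so it does not stand.
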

\begin{proof}
For an interval $\textbf{A}=[\underline{a}, \overline{a}]$, we have the following three cases.
\begin{enumerate}[$\bullet$ \textbf{Case} 1.]
\item If \emph{$\underline{a}~\geq~ 0$ and $\overline{a}~\geq~ 0$}. Then, $\underline{a}(w\underline{a}+w'\overline{a})~\geq~ 0$ and $\overline{a}(w\underline{a}+w'\overline{a})~\geq~ 0$. Hence,
    \[
    \textbf{0}\preceq(w\underline{a}+w'\overline{a})\odot[\underline{a}, \overline{a}]~\Longrightarrow~ (w\underline{a}+w'\overline{a})\odot[\underline{a}, \overline{a}]~\nprec~ \textbf{0}.
    \]
\item If \emph{$\underline{a}~<~ 0$ and $\overline{a}~\leq~ 0$}. Then, $\underline{a}(w\underline{a}+w'\overline{a})~\geq~ 0$ and $\overline{a}(w\underline{a}+w'\overline{a})~\geq~ 0$. Hence,
    \[
    \textbf{0}\preceq(w\underline{a}+w'\overline{a})\odot[\underline{a}, \overline{a}]~\Longrightarrow~ (w\underline{a}+w'\overline{a})\odot[\underline{a}, \overline{a}]~\nprec~ \textbf{0}.
    \]
\item If \emph{$\underline{a}~\leq~ 0$ and $\overline{a}~>~ 0$}. Then, either $(w\underline{a}+w'\overline{a})=0$ or $(w\underline{a}+w'\overline{a})~\neq~ 0$.
    If $(w\underline{a}+w'\overline{a})=0$, we have
    \[
    (w\underline{a}+w'\overline{a})\odot[\underline{a}, \overline{a}]=\textbf{0}~\Longrightarrow~ (w\underline{a}+w'\overline{a})\odot[\underline{a}, \overline{a}]~\nprec~ \textbf{0}.
    \]
    Further, if $(w\underline{a}+w'\overline{a})~\neq~ 0$, the terms $\underline{a}(w\underline{a}+w'\overline{a})$ and $\overline{a}(w\underline{a}+w'\overline{a})$ are alternative in sign. So,
    \[
    (w\underline{a}+w'\overline{a})\odot[\underline{a}, \overline{a}]~\nprec~ \textbf{0}.
    \]
\end{enumerate}
\end{proof}
\begin{thm}\label{tndd3}
Let $\mathcal{X}\subseteq\mathbb{R}^n$ be a nonempty set and $\textbf{F}(x):\mathcal{X}\rightarrow I(\mathbb{R})$ be $gH$-differentiable at a point $\bar{x}\in\mathcal{X}$. Then, the direction $-\mathcal{W}(\nabla\textbf{F}(\bar{x}))$ is an efficient-direction of $\textbf{F}$ at $\bar{x}$, provided $0\not\in D_i\textbf{F}(\bar{x})$ for at least one $i\in \{1, 2, \ldots, n\}$.
\end{thm}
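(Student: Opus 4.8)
The plan is to reduce the claim to Theorem \ref{tndd1}, which already guarantees that any direction $d$ satisfying $\textbf{0}~\npreceq~ d^T\odot\nabla\textbf{F}(\bar{x})$ is an efficient-direction of $\textbf{F}$ at $\bar{x}$. Hence it suffices to verify this single non-domination relation for the particular choice $d=-\mathcal{W}(\nabla\textbf{F}(\bar{x}))$, and the whole proof becomes a sign computation on the endpoints of $d^T\odot\nabla\textbf{F}(\bar{x})$.

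First I would fix notation by writing $D_i\textbf{F}(\bar{x})=[\underline{a}_i,\overline{a}_i]$ and $c_i=w\underline{a}_i+w'\overline{a}_i$, so that $\mathcal{W}(\nabla\textbf{F}(\bar{x}))=(c_1,\ldots,c_n)^T$ and $d_i=-c_i$. Using the expression $d^T\odot\nabla\textbf{F}(\bar{x})=\bigoplus_{i=1}^n d_i\odot D_i\textbf{F}(\bar{x})$ from Theorem \ref{td1}, together with the fact that $(-1)\odot$ distributes over Moore addition, I would rewrite
\[
d^T\odot\nabla\textbf{F}(\bar{x})=(-1)\odot\bigoplus_{i=1}^n \textbf{B}_i,\qquad \textbf{B}_i=c_i\odot[\underline{a}_i,\overline{a}_i],
\]
and set $\textbf{S}=\bigoplus_{i=1}^n\textbf{B}_i=[\underline{s},\overline{s}]$. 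Since the upper endpoint of a Moore sum is the sum of the upper endpoints, $\overline{s}=\sum_{i=1}^n\overline{b}_i$, where $\overline{b}_i$ denotes the upper endpoint of $\textbf{B}_i$.

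The heart of the argument is a sign analysis of the $\overline{b}_i$. By Lemma \ref{lw1}, each $\textbf{B}_i~\nprec~\textbf{0}$; and because a negative upper endpoint would already force $\textbf{B}_i~\prec~\textbf{0}$ (as then $\underline{b}_i\leq\overline{b}_i<0$), this immediately gives $\overline{b}_i\geq 0$ for every $i$. For the index $i_0$ with $0\notin D_{i_0}\textbf{F}(\bar{x})$, I would split into the two cases $\underline{a}_{i_0}>0$ (whence $c_{i_0}>0$ and $\overline{b}_{i_0}=c_{i_0}\overline{a}_{i_0}>0$) and $\overline{a}_{i_0}<0$ (whence $c_{i_0}<0$ and $\overline{b}_{i_0}=c_{i_0}\underline{a}_{i_0}>0$); in both cases $\overline{b}_{i_0}>0$ strictly. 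Summing then yields $\overline{s}\geq\overline{b}_{i_0}>0$.

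Finally, since $(-1)\odot\textbf{S}=[-\overline{s},-\underline{s}]$ has lower endpoint $-\overline{s}<0$, the interval $\textbf{0}$ cannot dominate it (equivalently, one may invoke Lemma \ref{ldr2}), so $\textbf{0}~\npreceq~ d^T\odot\nabla\textbf{F}(\bar{x})$, and Theorem \ref{tndd1} concludes that $d=-\mathcal{W}(\nabla\textbf{F}(\bar{x}))$ is an efficient-direction. I expect the main obstacle to be the endpoint bookkeeping of the third step: making precise that Lemma \ref{lw1} furnishes $\overline{b}_i\geq 0$ for all $i$, while the proviso $0\notin D_{i_0}\textbf{F}(\bar{x})$ upgrades this to a strict inequality at one coordinate. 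This strict contribution is exactly where the stated hypothesis is used, and without it one could only conclude $\overline{s}\geq 0$, which is not enough to rule out $\textbf{0}$ dominating $d^T\odot\nabla\textbf{F}(\bar{x})$.
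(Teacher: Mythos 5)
Your proof is correct and takes essentially the same route as the paper: both reduce the statement to Theorem \ref{tndd1} by verifying $\textbf{0}\npreceq d^{T}\odot\nabla\textbf{F}(\bar{x})$ for $d=-\mathcal{W}(\nabla\textbf{F}(\bar{x}))$, using the same decomposition $\bigoplus_{i=1}^{n}(w\underline{a}_{i}+w'\overline{a}_{i})\odot[\underline{a}_{i},\overline{a}_{i}]$, the same key Lemma \ref{lw1}, and the coordinate with $0\notin D_{i}\textbf{F}(\bar{x})$. The only difference is the final bookkeeping: the paper establishes the two relations ``$\neq\textbf{0}$'' and ``$\nprec\textbf{0}$'' for this sum and combines them, whereas you obtain the same non-domination by directly showing the sum's upper endpoint is strictly positive---an equivalent, and if anything slightly more explicit, computation.
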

\begin{proof}
Let $D_i\textbf{F}(\bar{x})=\textbf{A}_i$ for each $i\in \{1, 2, \ldots, n\}$ and $0\not\in \textbf{A}_i$ for at least one $i$. Thus,
\[
(\mathcal{W}(\nabla\textbf{F}(\bar{x})))^T\odot\nabla\textbf{F}(\bar{x})= \bigoplus_{i=1}^n(w\underline{a}_i+w'\overline{a}_i)\odot[\underline{a}_i, \overline{a}_i].
\]
Let $0\not\in \textbf{A}_i$ for $i=j$. Therefore, $\underline{a}_j$ and $\overline{a}_j$ both are either positive or negative. Thus, $(w\underline{a}_j+w'\overline{a}_j)~\neq~ 0$, and hence $(w\underline{a}_j+w'\overline{a}_j)\odot[\underline{a}_j, \overline{a}_j]~\neq~\textbf{0}$. Therefore, according to the property of interval addition we get
\begin{equation}\label{endd3a}
(\mathcal{W}(\nabla\textbf{F}(\bar{x})))^T\odot\nabla\textbf{F}(\bar{x})~\neq~\textbf{0}.
\end{equation}
If possible, let $(\mathcal{W}(\nabla\textbf{F}(\bar{x})))^T\odot\nabla\textbf{F}(\bar{x})~\prec~ \textbf{0}$. Hence, $(w\underline{a}_i+w'\overline{a}_i)\odot[\underline{a}_i, \overline{a}_i]~\prec~ \textbf{0}$ for at least one $i\in \{1, 2, \ldots, n\}$, which is not possible due to Lemma \ref{lw1}. Therefore,
\begin{equation}\label{endd3b}
(\mathcal{W}(\nabla\textbf{F}(\bar{x})))^T\odot\nabla\textbf{F}(\bar{x})~\nprec~ \textbf{0}~\Longrightarrow~ \textbf{0}\nprec-(\mathcal{W}(\nabla\textbf{F}(\bar{x})))^T\odot\nabla\textbf{F}(\bar{x}).
\end{equation}
By relations (\ref{endd3a}) and (\ref{endd3b}), we obtain
\[
\textbf{0}\npreceq-(\mathcal{W}(\nabla\textbf{F}(\bar{x})))^T\odot\nabla\textbf{F}(\bar{x}).
\]
Hence, by Theorem \ref{tndd1}, $-\mathcal{W}(\nabla\textbf{F}(\bar{x}))$ is an efficient-direction of $\textbf{F}$ at $\bar{x}$, where the corresponding efficient point $x'$, given by
\[
x'=\bar{x}-\alpha'\mathcal{W}(\nabla\textbf{F}(\bar{x}))~~\text{with}~~\alpha'=\argeff_{\alpha\in \mathbb{R}_+}~\textbf{F}(\bar{x}-\alpha \mathcal{W}(\nabla\textbf{F}(\bar{x}))),
\]
satisfies the condition (\ref{ced1}) of Definition \ref{ded} at $\bar{x}$.
\end{proof}
Based on Theorem \ref{tndd3} and Corollary \ref{cred2}, Algorithm \ref{algoes1} is reduced to the following Algorithm \ref{algoes2}. As Algorithm \ref{algoes2} is a particular case of Algorithm \ref{algoes1}, we name the method as $\mathcal{W}$-$gH$-gradient efficient-direction method. The letter `$\mathcal{W}$' is due to the reason that we use the mapping $\mathcal{W}: I(\mathbb{R})^n\rightarrow \mathbb{R}^n$ to generate efficient-direction at each iteration in Algorithm \ref{algoes2}.
\begin{algorithm}[H]
    \caption{$\mathcal{W}$-$gH$-Gradient Method for IOP}\label{algoes2}
    \begin{algorithmic}[1]
      \REQUIRE Give $w \in [0, 1]$, the initial point $x_{0}$ and the IVF $\textbf{F}:\mathcal{X}(\subseteq\mathbb{R}^n)\rightarrow I(\mathbb{R})$.
      \STATE Set $k = 0$.
      \STATE If $0\in D_i\textbf{F}(x_{k})$ for all $i\in \{1, 2, \ldots, n\}$, then Return $x_{k}$ as an efficient-solution and Stop. Otherwise go to Step 3.
      \STATE Set $d_{k}=-\mathcal{W}(\nabla\textbf{F}(x_k))$ and find an $\alpha_{k}$, where
            \[
            \alpha_{k}=\argeff_{\alpha\in \mathbb{R}_+}~\textbf{F}(x_k-\alpha d_k).\]
       \STATE Calculate
             \[
             x_{k+1} = x_{k} + \alpha_{k}d_{k}.
             \]
      \STATE Set $k \leftarrow k + 1$ and go to Step 2.
    \end{algorithmic}
  \end{algorithm}
\begin{rmrk}
It is to be mentioned that in Algorithm \ref{algoes2}, for the degenerate case of the IVF $\textbf{F}$, i.e., for $\underline{f}(x)=\overline{f}(x)=f(x)$ for all $x\in\mathcal{X}$, the direction $d_k$ will be $-\nabla f(x_k)$ and step length $\alpha_k$ will be $\text{argmin}f(x_k+\alpha d_k)$ at each iteration $k$, for any values of $w$, $w'~\geq~ 0$ with $w+w'=1$ in the mapping $\mathcal{W}$. Thus, in that case, $\mathcal{W}$-$gH$-gradient efficient method is same as steepest descent method.\\
\end{rmrk}
\begin{rmrk}
At each iteration $k$ in Algorithm \ref{algoes2}, one may choose $\alpha_{k}$ such that
\begin{equation}\label{calpha}
            \alpha_{k}\in \left\{\alpha~|~w\underline{a}+w'\overline{a}=0, ~~\text{where}~~[\underline{a}, \overline{a}]=\textbf{F}'(x_{k}+\alpha d_{k})\right\},
\end{equation}
where $\textbf{F}'(x_{k}+\alpha d_{k})$ is the $gH$-derivative of $\textbf{F}(x_{k}+\alpha d_{k})$ with respect to $\alpha$. But it is to be kept in mind that  (\ref{calpha})  is a necessary
condition to be $\alpha_k$ an argeff of $\textbf{F}(x_k-\alpha d_k)$, not sufficient.

\end{rmrk}
\begin{rmrk}
One question may arise: for a given pair of non-negative $w$ and $w'$ with $w+w'=1$, if we consider the real-valued function $w\underline{f}(x)+w'\overline{f}(x)$ corresponding to the IOP (\ref{IOP}) and apply the conventional steepest descent method, whether the obtained direction and step length in each iteration are identical with those obtained in each iteration of $\mathcal{W}$-$gH$-gradient efficient method?\\

\noindent The answer is \emph{yes} only when
\[
D_i\textbf{F}(x)=\left[\frac{\partial\underline{f}(x)}{\partial x_i}, \frac{\partial\overline{f}(x)}{\partial x_i}\right]~\text{for all}~i\in\{1,\ 2,\ \ldots,\ n\}~\text{at each}~x\in\mathcal{X},
\]
which is not true in general (see \cite{chalco2013calculus} for details).
\end{rmrk}
\begin{lem}\label{lw2}
For two elements $\bar{\textbf{A}}$ and $\bar{\textbf{B}}$ in $I(\mathbb{R})^n$ and a vector $h\in\mathbb{R}^n$, 
\[
\textbf{0}~\preceq~ h^T\odot\left(\bar{\textbf{A}}\ominus_{gH}\bar{\textbf{B}}\right)~\Longrightarrow~ \left(\mathcal{W}\left(\bar{\textbf{A}}\right)-\mathcal{W}\left(\bar{\textbf{B}}\right)\right)^T h~\geq~ 0.
\]
\end{lem}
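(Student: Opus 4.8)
The statement is an implication about componentwise interval structures, so my plan is to unpack both sides into their coordinatewise scalar descriptions and show that the hypothesis forces a sign condition that exactly yields the conclusion. Write $\bar{\textbf{A}}=(\textbf{A}_1,\ldots,\textbf{A}_n)^T$ and $\bar{\textbf{B}}=(\textbf{B}_1,\ldots,\textbf{B}_n)^T$ with $\textbf{A}_i=[\underline{a}_i,\overline{a}_i]$ and $\textbf{B}_i=[\underline{b}_i,\overline{b}_i]$. The left side involves $h^T\odot(\bar{\textbf{A}}\ominus_{gH}\bar{\textbf{B}})=\bigoplus_{i=1}^n h_i\odot(\textbf{A}_i\ominus_{gH}\textbf{B}_i)$, while the right side $(\mathcal{W}(\bar{\textbf{A}})-\mathcal{W}(\bar{\textbf{B}}))^Th=\sum_{i=1}^n h_i\big(w(\underline{a}_i-\underline{b}_i)+w'(\overline{a}_i-\overline{b}_i)\big)$, using the definition of $\mathcal{W}$ and Remark~\ref{rw1}.

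\textbf{Key steps.} First I would use part~(\ref{part1}) of Lemma~\ref{ldr1} (or directly the definition of $\preceq$) to translate $\textbf{0}\preceq h^T\odot(\bar{\textbf{A}}\ominus_{gH}\bar{\textbf{B}})$ into the two scalar inequalities $0\le \underline{s}$ and $0\le \overline{s}$, where $[\underline{s},\overline{s}]$ is the interval $\bigoplus_i h_i\odot(\textbf{A}_i\ominus_{gH}\textbf{B}_i)$. Second, I would form a convex combination: multiply the lower-limit inequality by $w\ge 0$ and the upper-limit inequality by $w'\ge 0$ and add them, obtaining $0\le w\underline{s}+w'\overline{s}$. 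Third, the crucial observation is that $w\underline{s}+w'\overline{s}$ collapses exactly to $\sum_{i=1}^n h_i\big(w(\underline{a}_i-\underline{b}_i)+w'(\overline{a}_i-\overline{b}_i)\big)$, which is precisely $(\mathcal{W}(\bar{\textbf{A}})-\mathcal{W}(\bar{\textbf{B}}))^Th$. That identification gives the conclusion $(\mathcal{W}(\bar{\textbf{A}})-\mathcal{W}(\bar{\textbf{B}}))^Th\ge 0$ immediately.

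\textbf{Main obstacle.} The delicate part is Step three: verifying that the $w$-weighted average of the lower and upper endpoints of the \emph{sum of scaled $gH$-differences} equals the scalar expression from $\mathcal{W}$, despite the $\min/\max$ gymnastics hidden inside both $h_i\odot[\cdot,\cdot]$ (which swaps endpoints when $h_i<0$) and $\textbf{A}_i\ominus_{gH}\textbf{B}_i=[\min\{\underline{a}_i-\underline{b}_i,\overline{a}_i-\overline{b}_i\},\max\{\cdots\}]$. The point that makes it work is that for any interval $[\underline{c},\overline{c}]$ and any scalar $t$, the quantity $w\cdot(\text{lower of }t\odot[\underline{c},\overline{c}])+w'\cdot(\text{upper of }t\odot[\underline{c},\overline{c}])$ is \emph{not} in general $t(w\underline{c}+w'\overline{c})$ unless $w=w'=\tfrac12$; so the clean collapse I want in Step three is the real content to check. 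I would therefore avoid claiming endpoint-by-endpoint equality and instead only use the aggregate inequality $0\le w\underline{s}+w'\overline{s}$, then bound $w\underline{s}+w'\overline{s}$ from above by $\sum_i h_i(w(\underline{a}_i-\underline{b}_i)+w'(\overline{a}_i-\overline{b}_i))$ using $\underline{s}\le \sum_i(\text{lower of }h_i\odot(\textbf{A}_i\ominus_{gH}\textbf{B}_i))$ and the analogous upper estimate — this directionality is exactly what the one-sided implication (rather than an equivalence) in the lemma anticipates, so I expect the weighted sum to dominate the desired scalar rather than equal it, which still closes the argument.
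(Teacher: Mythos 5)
Your reduction of the hypothesis to the two scalar inequalities $0\le\underline{s}$ and $0\le\overline{s}$ is fine, and you correctly spotted that the naive collapse $w\underline{s}+w'\overline{s}=\sum_i h_i\bigl(w(\underline{a}_i-\underline{b}_i)+w'(\overline{a}_i-\overline{b}_i)\bigr)$ fails when some coordinate has its endpoints swapped. But your fallback step is also false. Write $u_i=(\underline{a}_i-\underline{b}_i)h_i$ and $v_i=(\overline{a}_i-\overline{b}_i)h_i$; then $\underline{s}=\sum_i\min\{u_i,v_i\}$ and $\overline{s}=\sum_i\max\{u_i,v_i\}$ \emph{exactly} (Moore addition adds endpoints, so your ``estimates'' are equalities), and the bound you need, $w\underline{s}+w'\overline{s}\le\sum_i(wu_i+w'v_i)$, fails termwise whenever $u_i>v_i$ and $w<w'$, since $wu_i+w'v_i-\bigl(w\min\{u_i,v_i\}+w'\max\{u_i,v_i\}\bigr)=(w-w')(u_i-v_i)$. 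Concretely, take $n=1$, $h=1$, $\textbf{A}_1=[0,3]$, $\textbf{B}_1=[-2,2]$, $w=0$, $w'=1$: the hypothesis holds since $\textbf{A}_1\ominus_{gH}\textbf{B}_1=[1,2]$, but your bound would require $2\le 1$. (Your closing sentence even asserts the opposite direction, that the weighted sum dominates the target, which would be logically useless for concluding the target is nonnegative.) Worse, no repair that first merges $\underline{s}$ and $\overline{s}$ into the single quantity $w\underline{s}+w'\overline{s}$ can work, because that aggregate is strictly weaker than what is needed: with $w=0$, $w'=1$, $h=(1,1)^T$, $\textbf{A}_1\ominus_{gH}\textbf{B}_1=[-1,10]$ and $\textbf{A}_2\ominus_{gH}\textbf{B}_2=[-2,-2]$, one gets $w\underline{s}+w'\overline{s}=8\ge 0$ while $\sum_i(wu_i+w'v_i)=-3<0$.

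The correct move---and it is what the paper's proof does, phrased via a partition of the indices into those where the $gH$-difference keeps the order $u_i\le v_i$ and those where it swaps---is to keep the two ``unswapped'' sums separate and apply the hypothesis \emph{before} forming any convex combination. Since $u_i\ge\min\{u_i,v_i\}$ and $v_i\ge\min\{u_i,v_i\}$ for every $i$, the single inequality $\underline{s}\ge 0$ already gives
\begin{equation*}
\sum_{i=1}^n h_i(\underline{a}_i-\underline{b}_i)~\ge~\underline{s}~\ge~0
\qquad\text{and}\qquad
\sum_{i=1}^n h_i(\overline{a}_i-\overline{b}_i)~\ge~\underline{s}~\ge~0,
\end{equation*}
and only now does one multiply these by $w\ge 0$ and $w'\ge 0$ respectively and add, obtaining $\bigl(\mathcal{W}(\bar{\textbf{A}})-\mathcal{W}(\bar{\textbf{B}})\bigr)^T h\ge 0$. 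Your instinct to use a $(w,w')$-combination was right; it must simply be applied to the sums $\sum_i u_i$ and $\sum_i v_i$, not to the endpoints $\underline{s}$, $\overline{s}$ of the interval sum.
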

\begin{proof}
\begingroup\allowdisplaybreaks\begin{align*}
h^T\odot\left(\bar{\textbf{A}}\ominus_{gH}\bar{\textbf{B}}\right) &=h^T\odot\left(\left(\textbf{A}_1, \textbf{A}_2,\ldots,\textbf{A}_n\right)^T\ominus_{gH}\left(\textbf{B}_1, \textbf{B}_2,\ldots,\textbf{B}_n\right)^T\right)\\
&=h^T\odot\left(\left(\textbf{A}_1\ominus_{gH}\textbf{B}_1\right), \left(\textbf{A}_2\ominus_{gH}\textbf{B}_2\right),\ldots,\left(\textbf{A}_n\ominus_{gH}\textbf{B}_n\right)\right)^T\\
&=\bigoplus_{i=1}^n h_i\odot\left(\textbf{A}_i\ominus_{gH}\textbf{B}_i\right).
\end{align*}\endgroup
According to Remark \ref{ria1}, without loss of generality, let us assume
\begingroup\allowdisplaybreaks\begin{align*}
\bigoplus_{i=1}^n h_i\odot\left(\textbf{A}_i\ominus_{gH}\textbf{B}_i\right) =~ &\bigoplus_{k=1}^p h_k\odot\left(\textbf{A}_k\ominus_{gH}\textbf{B}_k\right) \oplus\bigoplus_{l=p+1}^n h_l\odot\left(\textbf{A}_l\ominus_{gH}\textbf{B}_l\right)\\
=~ & \bigoplus_{k=1}^p \textbf{C}_k \oplus\bigoplus_{l=p+1}^n \textbf{C}_l,
\end{align*}\endgroup
where $[\underline{c}_k, \overline{c}_k]= h_k\odot\left(\textbf{A}_k\ominus_{gH}\textbf{B}_k\right) =\left[\left(\underline{a}_k-\underline{b}_k\right)h_k, \left(\overline{a}_k-\overline{b}_k\right)h_k\right]
$ for all $k$'s and $[\underline{c}_l, \overline{c}_l]=h_l\odot\left(\textbf{A}_l\ominus_{gH}\textbf{B}_l\right) =\left[\left(\overline{a}_l-\overline{b}_l\right)h_l, \left(\underline{a}_l-\underline{b}_l\right)h_l\right]
$ for all $l$'s. Thus,
\begingroup\allowdisplaybreaks\begin{align*}
\textbf{0}~\preceq~ h^T\odot\left(\bar{\textbf{A}}\ominus_{gH}\bar{\textbf{B}}\right)
~\Longrightarrow~ & \textbf{0}~\preceq~ \bigoplus_{k=1}^p[\underline{c}_k, \overline{c}_k]  \oplus\bigoplus_{l=p+1}^n [\underline{c}_l, \overline{c}_l]\\
~\Longrightarrow~ & \textbf{0}~\preceq~ \left[\sum_{k=1}^p\underline{c}_k +\sum_{l=p+1}^n\underline{c}_l, \sum_{k=1}^p\overline{c}_k +\sum_{l=p+1}^n\overline{c}_l\right]\\
~\Longrightarrow~ & 0~\leq~ \sum_{k=1}^p\underline{c}_k +\sum_{l=p+1}^n\underline{c}_l~\text{and}~ 0~\leq~ \sum_{k=1}^p\overline{c}_k +\sum_{l=p+1}^n\overline{c}_l.
\end{align*}\endgroup
Therefore,
\begingroup\allowdisplaybreaks\begin{align*}
& \sum_{k=1}^p\underline{c}_k +\sum_{l=p+1}^n\overline{c}_l \geq \sum_{k=1}^p\underline{c}_k +\sum_{l=p+1}^n\underline{c}_l\geq 0 ~\text{and}~ \sum_{k=1}^p\overline{c}_k +\sum_{l=p+1}^n\underline{c}_l \geq \sum_{k=1}^p\underline{c}_k +\sum_{l=p+1}^n\underline{c}_l \geq 0\\
\Longrightarrow & \sum_{k=1}^p\left(\underline{a}_k-\underline{b}_k\right)h_k +\sum_{l=p+1}^n\left(\underline{a}_l-\underline{b}_l\right)h_l ~\geq~ 0 ~\text{and}~ \sum_{k=1}^p\left(\overline{a}_k-\overline{b}_k\right)h_k +\sum_{l=p+1}^n\left(\overline{a}_l-\overline{b}_l\right)h_l ~\geq~ 0\\
\Longrightarrow & \sum_{i=1}^n\left(\underline{a}_i-\underline{b}_i\right)h_i ~\geq~ 0 ~\text{and}~ \sum_{i=1}^n\left(\overline{a}_i-\overline{b}_i\right)h_i ~\geq~ 0 \\
\Longrightarrow & w\sum_{i=1}^n\left(\underline{a}_i-\underline{b}_i\right)h_i ~\geq~ 0 ~\text{and}~ w'\sum_{i=1}^n\left(\overline{a}_i-\overline{b}_i\right)h_i ~\geq~ 0 \\
\Longrightarrow & \sum_{i=1}^nw\left(\underline{a}_i-\underline{b}_i\right)h_i +w'\left(\overline{a}_i-\overline{b}_i\right)h_i ~\geq~ 0 \\
\Longrightarrow &\left(\mathcal{W}\left(\bar{\textbf{A}}\right)-\mathcal{W}\left(\bar{\textbf{B}}\right)\right)^Th~\geq~ 0.
\end{align*}\endgroup
\end{proof}
\begin{lem}{\label{lw3}}

If $\textbf{F}$ is $gH$-differentiable IVF, then

\begin{equation}{\label{qqq}}
\rVert \mathcal{W}(\nabla \textbf{F}(x))-\mathcal{W}(\nabla \textbf{F}(y)) \rVert_{\mathcal{X}} ~\leq~ \rVert \nabla \textbf{F}(x)\ominus_{gH}\nabla \textbf{F}(y) \rVert_{I(\mathbb{R})^n} ~~\text{for all}~~ x,~ y \in \mathcal{X}.
\end{equation}

\end{lem}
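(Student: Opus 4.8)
The plan is to push the whole inequality down to the level of the interval endpoints and then to invoke the elementary averaging bound $|w\alpha+w'\beta|\le\max\{|\alpha|,|\beta|\}$, which holds whenever $w,w'\ge 0$ and $w+w'=1$. First I would write $\nabla\textbf{F}(x)=(\textbf{A}_1,\ldots,\textbf{A}_n)^T$ and $\nabla\textbf{F}(y)=(\textbf{B}_1,\ldots,\textbf{B}_n)^T$ with $\textbf{A}_i=[\underline{a}_i,\overline{a}_i]$ and $\textbf{B}_i=[\underline{b}_i,\overline{b}_i]$. By the definition of the map $\mathcal{W}$, the $i$-th coordinate of the real vector $\mathcal{W}(\nabla\textbf{F}(x))-\mathcal{W}(\nabla\textbf{F}(y))$ is exactly $w(\underline{a}_i-\underline{b}_i)+w'(\overline{a}_i-\overline{b}_i)$.

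Next I would evaluate the right-hand side coordinatewise. From the endpoint formula for the $gH$-difference, $\textbf{A}_i\ominus_{gH}\textbf{B}_i$ has endpoints $\underline{a}_i-\underline{b}_i$ and $\overline{a}_i-\overline{b}_i$ (ordered by $\min$ and $\max$), so the definition of the norm on $I(\mathbb{R})$ gives $\lVert\textbf{A}_i\ominus_{gH}\textbf{B}_i\rVert_{I(\mathbb{R})}=\max\{|\underline{a}_i-\underline{b}_i|,\,|\overline{a}_i-\overline{b}_i|\}$, and hence $\lVert\nabla\textbf{F}(x)\ominus_{gH}\nabla\textbf{F}(y)\rVert_{I(\mathbb{R})^n}=\sum_{i=1}^n\max\{|\underline{a}_i-\underline{b}_i|,\,|\overline{a}_i-\overline{b}_i|\}$, using the summation definition of $\lVert\cdot\rVert_{I(\mathbb{R})^n}$.

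The core estimate is then a per-coordinate bound. By the triangle inequality together with $w,w'\ge 0$, one has $|w(\underline{a}_i-\underline{b}_i)+w'(\overline{a}_i-\overline{b}_i)|\le w|\underline{a}_i-\underline{b}_i|+w'|\overline{a}_i-\overline{b}_i|$, and since $w+w'=1$ this convex combination is dominated by the larger of the two terms, i.e. by $\max\{|\underline{a}_i-\underline{b}_i|,\,|\overline{a}_i-\overline{b}_i|\}$. Writing $c_i=\max\{|\underline{a}_i-\underline{b}_i|,\,|\overline{a}_i-\overline{b}_i|\}\ge 0$, this shows that the $i$-th coordinate of $\mathcal{W}(\nabla\textbf{F}(x))-\mathcal{W}(\nabla\textbf{F}(y))$ is bounded in absolute value by $c_i$, while the right-hand side equals $\sum_{i=1}^n c_i$.

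Finally I would assemble the coordinatewise bounds into the norm statement. Since the norm of a real vector is dominated by the sum of the absolute values of its coordinates (the usual Euclidean norm on $\mathbb{R}^n$ satisfies $\lVert v\rVert\le\sum_i|v_i|$, and equality holds if the sum-norm is intended), the coordinate bounds $|v_i|\le c_i$ immediately give $\lVert\mathcal{W}(\nabla\textbf{F}(x))-\mathcal{W}(\nabla\textbf{F}(y))\rVert\le\sum_{i=1}^n c_i=\lVert\nabla\textbf{F}(x)\ominus_{gH}\nabla\textbf{F}(y)\rVert_{I(\mathbb{R})^n}$, which is the claim. The computation is essentially routine; the only point requiring care is that the correct object to pair with $\mathcal{W}$ is the $gH$-difference (so that its norm reads off as the coordinatewise $\max$), and that the convex-combination inequality $|w\alpha+w'\beta|\le\max\{|\alpha|,|\beta|\}$ is exactly what matches each $\mathcal{W}$-coordinate to the corresponding endpoint $\max$. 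Note also that $gH$-differentiability of $\textbf{F}$ is used only to guarantee that $\nabla\textbf{F}(x)$ and $\nabla\textbf{F}(y)$ exist; the estimate itself is purely algebraic.
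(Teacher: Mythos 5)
Your proposal is correct and follows essentially the same route as the paper's proof: bound the vector norm by the sum of absolute values of the coordinates, apply the convex-combination estimate $|w\alpha+w'\beta|\le\max\{|\alpha|,|\beta|\}$ coordinatewise, and recognize the resulting sum of maxima as $\rVert \nabla \textbf{F}(x)\ominus_{gH}\nabla \textbf{F}(y) \rVert_{I(\mathbb{R})^n}$. Your closing observations (that $gH$-differentiability is needed only for the gradients to exist, and that the estimate itself is purely algebraic) are accurate and consistent with the paper.
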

\begin{proof} Let
\[
\nabla \textbf{F}(x)=\bar{\textbf{A}}=\left(\textbf{A}_1, \textbf{A}_2,\ldots,\textbf{A}_n\right)^T=\left([\underline{a}_1, \overline{a}_1], [\underline{a}_2, \overline{a}_2],\ldots,[\underline{a}_n, \overline{a}_n]\right)^T
\]
and
\[
\nabla \textbf{F}(y)=\bar{\textbf{B}}=\left(\textbf{B}_1, \textbf{B}_2,\ldots,\textbf{B}_n\right)^T=\left([\underline{b}_1, \overline{b}_1], [\underline{b}_2, \overline{b}_2],\ldots,[\underline{b}_n, \overline{b}_n]\right)^T.
\]
Therefore,
\begingroup\allowdisplaybreaks\begin{align*}
\rVert \nabla \textbf{F}(x)\ominus_{gH}\nabla \textbf{F}(y)\rVert_{I(\mathbb{R})^n}&~=~\rVert (\textbf{A}_1\ominus_{gH} \textbf{B}_1, \textbf{A}_2\ominus_{gH} \textbf{B}_2, \ldots ,\textbf{A}_n\ominus_{gH} \textbf{B}_n )\rVert_{I(\mathbb{R})^n}\\
&~=~\rVert \textbf{A}_1\ominus_{gH} \textbf{B}_1\rVert_{I(\mathbb{R})}+\rVert \textbf{A}_2\ominus_{gH} \textbf{B}_2\rVert_{I(\mathbb{R})}+\cdots+\rVert\textbf{A}_n\ominus_{gH} \textbf{B}_n \rVert_{I(\mathbb{R})}.
\end{align*}\endgroup
We note that
\[
\mathcal{W}(\nabla \textbf{F}(x))=(w\underline{a}_1+w'\overline{a}_1, w\underline{a}_2+w'\overline{a}_2,\ldots, w\underline{a}_n+w'\overline{a}_n)^T
\]
and
\[
\mathcal{W}(\nabla \textbf{F}(y))=(w\underline{b}_1+w'\overline{b}_1, w\underline{b}_2+w'\overline{b}_2,\ldots, w\underline{b}_n+w'\overline{b}_n)^T.
\]
So,
\begingroup\allowdisplaybreaks\begin{align*}
&\rVert\mathcal{W}(\nabla \textbf{F}(x))-\mathcal{W}(\nabla \textbf{F}(y))\rVert_{\mathcal{X}}\\
~=~&\rVert(w(\underline{a}_1-\underline{b}_1)+w'(\overline{a}_1-\overline{b}_1), w(\underline{a}_2-\underline{b}_2)+w'(\overline{a}_2-\overline{b}_2),\\
&~~~~~~~~~~~~\cdots, w(\underline{a}_n-\underline{b}_n)+w'(\overline{a}_n-\overline{b}_n))\rVert_{\mathcal{X}}\\
~\leq~&\lvert w(\underline{a}_1-\underline{b}_1)+w'(\overline{a}_1-\overline{b}_1)\rvert+\lvert w(\underline{a}_2-\underline{b}_2)+w'(\overline{a}_2-\overline{b}_2)\rvert\\
     &~~~~~~~~+\cdots+\lvert w(\underline{a}_n-\underline{b}_n)+w'(\overline{a}_n-\overline{b}_n\rvert \\
~\leq~&\max \{\lvert \underline{a}_1-\underline{b}_1 \rvert, \lvert \overline{a}_1-\overline{b}_1\rvert \}+\max \{\lvert \underline{a}_2-\underline{b}_2 \rvert, \lvert \overline{a}_2-\overline{b}_2\rvert \}\\
		&~~~~~~~~+\cdots+\max \{\lvert \underline{a}_n-\underline{b}_n \rvert, \lvert \overline{a}_n-\overline{b}_n\rvert \}\\
~=~&\rVert \textbf{A}_1\ominus_{gH} \textbf{B}_1\rVert_{I(\mathbb{R})}+\rVert \textbf{A}_2\ominus_{gH} \textbf{B}_2\rVert_{I(\mathbb{R})}+\cdots+\rVert\textbf{A}_n\ominus_{gH} \textbf{B}_n \rVert_{I(\mathbb{R})}\\
~=~&\rVert \nabla \textbf{F}(x)\ominus_{gH}\nabla \textbf{F}(y) \rVert_{I(\mathbb{R})^\text{n}}.
\end{align*}\endgroup
\end{proof}
\begin{lem}\label{lw4}
Let $\textbf{F}$ be $gH$-differentiable and a strongly convex IVF on a nonempty convex subset $\mathcal{X}$ of $\mathbb{R}^n$. Then, there exists a $\sigma>0$ such that
\[
\left(\mathcal{W}(\nabla \textbf{F}(x))-\mathcal{W}(\nabla \textbf{F}(y))\right)^T(x-y) ~\geq~ \sigma \rVert x-y \rVert^2 ~~\text{for all}~~ x,~ y \in \mathcal{X}.
\]	
\end{lem}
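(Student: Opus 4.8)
The plan is to exploit the strong convexity of $\textbf{F}$ through its defining decomposition, isolate the quadratic part to produce the $\sigma\lVert x-y\rVert^2$ term explicitly, and reduce the remainder to a monotonicity statement for the associated convex IVF that can be fed into Lemma \ref{lw2}.

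First I would invoke Definition \ref{strongly_convex_IVF} together with Remark \ref{rscv1}: since $\textbf{F}$ is strongly convex, there are a $\sigma>0$ and a convex IVF $\textbf{G}$ with $\textbf{F}(x)=\textbf{G}(x)\oplus\tfrac{1}{2}\lVert x\rVert^{2}\odot[\sigma,\sigma]$, equivalently $\underline{g}=\underline{f}-\tfrac{\sigma}{2}\lVert x\rVert^{2}$ and $\overline{g}=\overline{f}-\tfrac{\sigma}{2}\lVert x\rVert^{2}$. Differentiating coordinatewise, the addition of $\sigma x_i$ to both $\partial_i\underline{f}$ and $\partial_i\overline{f}$ shifts both endpoints of the $i$-th partial $gH$-derivative by the same amount and hence preserves their order, giving $D_i\textbf{F}(x)=D_i\textbf{G}(x)\oplus[\sigma x_i,\sigma x_i]$. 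Because $w+w'=1$, the $\mathcal{W}$-combination of each coordinate is shifted by $(w+w')\sigma x_i=\sigma x_i$, which yields the key identity $\mathcal{W}(\nabla\textbf{F}(x))=\mathcal{W}(\nabla\textbf{G}(x))+\sigma x$ for every $x\in\mathcal{X}$.

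Using this identity I would split the target quantity as $(\mathcal{W}(\nabla\textbf{F}(x))-\mathcal{W}(\nabla\textbf{F}(y)))^{T}(x-y)=(\mathcal{W}(\nabla\textbf{G}(x))-\mathcal{W}(\nabla\textbf{G}(y)))^{T}(x-y)+\sigma(x-y)^{T}(x-y)$, where the last term is exactly $\sigma\lVert x-y\rVert^{2}$. It therefore suffices to show that the $\textbf{G}$-term is nonnegative, i.e. that the $\mathcal{W}$-gradient of the convex IVF $\textbf{G}$ is a monotone map. For this I would first establish $\textbf{0}\preceq(x-y)^{T}\odot(\nabla\textbf{G}(x)\ominus_{gH}\nabla\textbf{G}(y))$ and then apply Lemma \ref{lw2} with $\bar{\textbf{A}}=\nabla\textbf{G}(x)$, $\bar{\textbf{B}}=\nabla\textbf{G}(y)$ and $h=x-y$, which gives $(\mathcal{W}(\nabla\textbf{G}(x))-\mathcal{W}(\nabla\textbf{G}(y)))^{T}(x-y)\geq0$ and hence the claimed bound $\geq\sigma\lVert x-y\rVert^{2}$.

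The hard part will be establishing the factored monotonicity $\textbf{0}\preceq(x-y)^{T}\odot(\nabla\textbf{G}(x)\ominus_{gH}\nabla\textbf{G}(y))$ for convex $\textbf{G}$. Crucially, this cannot simply be quoted from Theorem \ref{td3}: the Remark following that theorem shows that the factored interval $(x-y)^{T}\odot(\nabla\textbf{G}(x)\ominus_{gH}\nabla\textbf{G}(y))$ is in general not equal to the quantity $(x-y)^{T}\odot\nabla\textbf{G}(x)\ominus_{gH}(x-y)^{T}\odot\nabla\textbf{G}(y)$ that Theorem \ref{td3} controls, so the inner form must be argued from scratch. I would do this directly: writing $D_i\textbf{G}(x)=[\min\{\partial_i\underline{g}(x),\partial_i\overline{g}(x)\},\max\{\partial_i\underline{g}(x),\partial_i\overline{g}(x)\}]$, expanding $\bigoplus_{i}(x_i-y_i)\odot(D_i\textbf{G}(x)\ominus_{gH}D_i\textbf{G}(y))$, and computing the lower endpoint of the resulting interval, which I would then bound below using the classical gradient-monotonicity inequalities $(\nabla\underline{g}(x)-\nabla\underline{g}(y))^{T}(x-y)\geq0$ and $(\nabla\overline{g}(x)-\nabla\overline{g}(y))^{T}(x-y)\geq0$ (valid since $\underline{g},\overline{g}$ are convex by Remark \ref{rc1}) together with the ordering $\underline{g}\leq\overline{g}$. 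Reconciling the coordinatewise $\min$/$\max$ selections produced by the partial $gH$-derivatives at $x$ and at $y$ with these two global monotonicity inequalities is the delicate bookkeeping on which the whole argument hinges, and it is where I expect the real work to lie.
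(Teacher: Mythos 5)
Your first two steps are exactly the paper's own: the strong-convexity decomposition $\textbf{F}(x)=\textbf{G}(x)\oplus\tfrac{1}{2}\lVert x\rVert^{2}\odot[\sigma,\sigma]$, the identity $\mathcal{W}(\nabla\textbf{F}(x))=\mathcal{W}(\nabla\textbf{G}(x))+\sigma x$, and the reduction of the lemma to the monotonicity statement $\left(\mathcal{W}(\nabla\textbf{G}(x))-\mathcal{W}(\nabla\textbf{G}(y))\right)^{T}(x-y)\geq 0$ for the convex part $\textbf{G}$. Your diagnosis of the last step is also correct, and it exposes a gap in the paper itself: the paper finishes by citing Theorem \ref{td3} together with Lemma \ref{lw2}, but Theorem \ref{td3} controls $(x-y)^{T}\odot\nabla\textbf{G}(x)\ominus_{gH}(x-y)^{T}\odot\nabla\textbf{G}(y)$, whereas Lemma \ref{lw2} assumes $\textbf{0}\preceq(x-y)^{T}\odot\left(\nabla\textbf{G}(x)\ominus_{gH}\nabla\textbf{G}(y)\right)$, and the Remark following Theorem \ref{td3} states that these two intervals differ in general. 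So you are ahead of the paper in diagnosis.

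But your proposed repair cannot be carried out. You plan to prove the inner-difference inequality $\textbf{0}\preceq(x-y)^{T}\odot\left(\nabla\textbf{G}(x)\ominus_{gH}\nabla\textbf{G}(y)\right)$ from three ingredients only: the formula $D_i\textbf{G}=\left[\min\{\partial_i\underline{g},\partial_i\overline{g}\},\max\{\partial_i\underline{g},\partial_i\overline{g}\}\right]$, gradient monotonicity of the convex functions $\underline{g},\overline{g}$, and the ordering $\underline{g}\leq\overline{g}$. These ingredients do not imply the inequality. Take $\mathcal{X}=[-2,2]^{2}$ and
\[
\underline{g}(z)=\tfrac{1}{2}(10z_1-9z_2)^{2},\qquad \overline{g}(z)=\tfrac{1}{2}(10z_2-9z_1)^{2}+40,
\]
so that $\overline{g}-\underline{g}=\tfrac{19}{2}(z_2^{2}-z_1^{2})+40\geq 2>0$ on $\mathcal{X}$ and both endpoint functions are smooth and convex. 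At $x=(1,1)$, $y=(0,0)$ one has $\nabla\underline{g}(x)=(10,-9)$, $\nabla\overline{g}(x)=(-9,10)$, $\nabla\underline{g}(y)=\nabla\overline{g}(y)=(0,0)$, hence $D_1\textbf{G}(x)=D_2\textbf{G}(x)=[-9,10]$ and $D_i\textbf{G}(y)=[0,0]$; with $h=x-y=(1,1)$,
\[
h^{T}\odot\left(\nabla\textbf{G}(x)\ominus_{gH}\nabla\textbf{G}(y)\right)=[-9,10]\oplus[-9,10]=[-18,20],
\]
and $\textbf{0}\npreceq[-18,20]$, so the inner form fails. Worse, the very target of your reduction also fails here: with $w=1$, $\left(\mathcal{W}(\nabla\textbf{G}(x))-\mathcal{W}(\nabla\textbf{G}(y))\right)^{T}(x-y)=-18<0$. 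The only reason this example does not contradict Lemma \ref{lw4} is that this $\textbf{G}$ is not $gH$-differentiable in the sense of Definition \ref{dghd}: by Theorem \ref{td1} the only candidate for $\textbf{L}_{\bar{x}}$ at $\bar{x}=(1,1)$ is $d^{T}\odot\nabla\textbf{G}(\bar{x})$, and along $d=(t,t)$ the defect $\left(\textbf{G}(\bar{x}+d)\ominus_{gH}\textbf{G}(\bar{x})\right)\ominus_{gH}d^{T}\odot\nabla\textbf{G}(\bar{x})=\left[-19t+\tfrac{t^{2}}{2},\,19t+\tfrac{t^{2}}{2}\right]$, which is not $o(\lVert d\rVert)$ --- precisely the phenomenon of the paper's Example \ref{exl}. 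The conclusion is that any correct proof of the monotonicity step must invoke $gH$-differentiability of $\textbf{G}$ (Definition \ref{dghd}) as an essential hypothesis, over and above convexity of the endpoints and mere existence of the $gH$-gradient, because it is exactly this hypothesis that rules out configurations like the one above (it forces a strong compatibility of $\nabla\underline{g}$ and $\nabla\overline{g}$ across coordinates). Your outline never uses it, so the ``delicate bookkeeping'' you defer to the end is not merely delicate: with the tools you have allotted yourself it is provably impossible.
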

\begin{proof}
Since $\textbf{F}$ is strong convex on $\mathcal{X}$, then there exists a convex IVF $\textbf{G}$ on $\mathcal{X}$ such that
\[
\textbf{G}(x)\oplus \frac{1}{2}\rVert x \rVert^2\odot[\sigma, \sigma]=\textbf{F}(x)~~\text{for all}~~ x\in \mathcal{X}.
\]
where $\sigma~>~0$. Therefore,
\begingroup\allowdisplaybreaks\begin{align*}
&\nabla \textbf{G}(x)\oplus ([\sigma x_1, \sigma x_1], [\sigma x_2, \sigma x_2], \ldots, [\sigma x_n, \sigma x_n])^T =\nabla \textbf{F}(x)\\
\Longleftrightarrow~&\mathcal{W}(\nabla \textbf{G}(x))+ \sigma x =\mathcal{W}(\nabla \textbf{F}(x)),~~ \text{by Remark \ref{rw1}}.
\end{align*}\endgroup
This implies
\begin{equation}{\label{hhhh}}
\mathcal{W}(\nabla \textbf{G}(x)) =\mathcal{W}(\nabla \textbf{F}(x))- \sigma x.
\end{equation}
Further, since $\textbf{F}$ is $gH$-differentiable, $\textbf{G}$ is $gH$-differentiable; as $\textbf{G}$ is also convex IVF, by Lemma \ref{lw2} and Theorem \ref{td3}, we have
\[
\left(\mathcal{W}(\nabla \textbf{G}(x))-\mathcal{W}(\nabla \textbf{G}(y))\right)^T(x-y) ~\geq~ 0 ~~\text{for all}~~ x,~ y \in \mathcal{X}.
\]
Then, from (\ref{hhhh}) we obtain
\begingroup\allowdisplaybreaks\begin{align*}
&\left(\mathcal{W}(\nabla \textbf{F}(x))-\mathcal{W}(\nabla \textbf{F}(y))-\sigma (x-y)\right)^T(x-y) ~\geq~ 0\\
~\Longrightarrow~& \left(\mathcal{W}(\nabla \textbf{F}(x))-\mathcal{W}(\nabla \textbf{F}(y))\right)^T(x-y)-\sigma \rVert x-y \rVert^2 ~\geq~ 0\\
~\Longrightarrow~& \left(\mathcal{W}(\nabla \textbf{F}(x))-\mathcal{W}(\nabla \textbf{F}(y))\right)^T(x-y)~\geq~ \sigma \rVert x-y \rVert^2.
\end{align*}\endgroup
\end{proof}
\begin{lem}{\label{lw5}}
If a $gH$-differentiable IVF $\textbf{F}$ on a nonempty subset $\mathcal{X}$ of $\mathbb{R}^n$ has $gH$-Lipschitz gradient, then for some $L~>~0$ we have
\[
\rVert \mathcal{W}(\nabla \textbf{F}(x))-\mathcal{W}(\nabla \textbf{F}(y)) \rVert ~\leq~ L \rVert x-y \rVert ~~\text{for all}~~ x,~ y \in \mathcal{X}
\]
\end{lem}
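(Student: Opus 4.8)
The plan is to obtain the desired Euclidean Lipschitz bound by composing two facts already at hand: the inequality of Lemma~\ref{lw3}, which controls the Euclidean deviation of $\mathcal{W}(\nabla\textbf{F})$ by the interval-valued deviation of $\nabla\textbf{F}$, and the hypothesis that $\textbf{F}$ has a $gH$-Lipschitz gradient, which in turn bounds that interval-valued deviation by a multiple of $\lVert x - y\rVert$. In other words, this lemma is essentially a corollary of Lemma~\ref{lw3}, with the Lipschitz hypothesis supplying the final link.

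First I would invoke Lemma~\ref{lw3}. Since $\textbf{F}$ is $gH$-differentiable, for all $x, y \in \mathcal{X}$ we have
\[
\lVert \mathcal{W}(\nabla\textbf{F}(x)) - \mathcal{W}(\nabla\textbf{F}(y))\rVert \leq \lVert \nabla\textbf{F}(x)\ominus_{gH}\nabla\textbf{F}(y)\rVert_{I(\mathbb{R})^n}.
\]
Next I would apply the definition of $gH$-Lipschitz gradient: by hypothesis there exists $M > 0$ such that
\[
\lVert \nabla\textbf{F}(x)\ominus_{gH}\nabla\textbf{F}(y)\rVert_{I(\mathbb{R})^n} \leq M\lVert x - y\rVert \quad\text{for all } x, y \in \mathcal{X}.
\]
Chaining the two inequalities and putting $L = M$ yields the assertion immediately.

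The only point requiring a word of care is purely notational: in Lemma~\ref{lw3} the left-hand norm is written $\lVert\cdot\rVert_{\mathcal{X}}$, whereas the present statement uses the bare symbol $\lVert\cdot\rVert$. Both denote the usual Euclidean norm on $\mathbb{R}^n$ applied to the \emph{real} vectors $\mathcal{W}(\nabla\textbf{F}(x))$ and $\mathcal{W}(\nabla\textbf{F}(y))$, so the composition is legitimate. Consequently there is no genuine obstacle in this proof; the entire substance resides in Lemma~\ref{lw3}, whose argument rests on the componentwise estimate $\lvert w(\underline a_i - \underline b_i) + w'(\overline a_i - \overline b_i)\rvert \leq \max\{\lvert \underline a_i - \underline b_i\rvert, \lvert \overline a_i - \overline b_i\rvert\}$ together with $w, w' \geq 0$ and $w + w' = 1$.
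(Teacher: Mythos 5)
Your proposal is correct and matches the paper's own proof essentially verbatim: both chain the inequality of Lemma~\ref{lw3} with the $gH$-Lipschitz gradient hypothesis to obtain the Euclidean Lipschitz bound. The only difference is the order in which the two ingredients are stated, which is immaterial.
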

\begin{proof} Since $\textbf{F}$ has $gH$-Lipschitz gradient, for some $L~>~0$, we have
\[
\lVert\nabla \textbf{F}(x)\ominus_{gH}\nabla \textbf{F}(y)\rVert_{I(\mathbb{R})} ~\leq~ L \lVert x-y \rVert ~~\text{for all}~~ x,~ y \in \mathcal{X}.
\]
By Lemma \ref{lw3} and the last relation we obtain
\[
\rVert \mathcal{W}(\nabla \textbf{F}(x))-\mathcal{W}(\nabla \textbf{F}(y)) \rVert ~\leq~ L \rVert x-y \rVert ~~\text{for all}~~ x,~ y \in \mathcal{X}.
\]
\end{proof}
\begin{lem}{\label{lw6}}
If $\textbf{F}$ is a strong convex and $gH$-differentiable IVF on a nonempty convex subset $\mathcal{X}$ of $\mathbb{R}^n$ with $gH$-Lipschitz gradient, then for all  $x$, $y \in \mathcal{X}$, there exists a $\sigma~>~0$ and an $L~>~0$ such that
\[
(\mathcal{W}(\nabla \textbf{F}(x))-\mathcal{W}(\nabla \textbf{F}(y))^T(y-x) ~\geq~ \frac{\sigma}{L^2} \lVert \mathcal{W}(\nabla \textbf{F}(x))-\mathcal{W}(\nabla \textbf{F}(y) \rVert^2~~\text{for all}~~ x,~ y \in \mathcal{X}.
\]
\end{lem}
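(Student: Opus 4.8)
The plan is to obtain the claimed co-coercivity-type inequality by directly chaining the two immediately preceding real-variable estimates, Lemma \ref{lw4} and Lemma \ref{lw5}; once these are available, no further interval-specific reasoning is required. The hypotheses are tailored to this: because $\textbf{F}$ is strongly convex and $gH$-differentiable, Lemma \ref{lw4} furnishes a $\sigma > 0$ with $\left(\mathcal{W}(\nabla \textbf{F}(x)) - \mathcal{W}(\nabla \textbf{F}(y))\right)^T (x-y) \geq \sigma \lVert x-y\rVert^2$, and because $\textbf{F}$ has $gH$-Lipschitz gradient, Lemma \ref{lw5} furnishes an $L > 0$ with $\lVert \mathcal{W}(\nabla \textbf{F}(x)) - \mathcal{W}(\nabla \textbf{F}(y)) \rVert \leq L\lVert x-y\rVert$. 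These are exactly the two constants named in the statement, so their existence is guaranteed by the assumptions.

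First I would recast the Lipschitz bound of Lemma \ref{lw5} into squared, ``inverted'' form. Squaring and rearranging—legitimate since $L > 0$—gives
\[
\lVert \mathcal{W}(\nabla \textbf{F}(x)) - \mathcal{W}(\nabla \textbf{F}(y)) \rVert^2 \leq L^2 \lVert x-y \rVert^2, \quad\text{hence}\quad \lVert x-y\rVert^2 \geq \tfrac{1}{L^2}\lVert \mathcal{W}(\nabla \textbf{F}(x)) - \mathcal{W}(\nabla \textbf{F}(y)) \rVert^2.
\]
This step is what manufactures the factor $1/L^2$: it turns an estimate that bounds the $\mathcal{W}$-gradient difference in terms of the argument difference into a lower bound on $\lVert x-y\rVert^2$ in terms of that same $\mathcal{W}$-gradient difference.

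Then I would substitute this lower bound into the strong-monotonicity estimate of Lemma \ref{lw4}, producing the chain
\[
\left(\mathcal{W}(\nabla \textbf{F}(x)) - \mathcal{W}(\nabla \textbf{F}(y))\right)^T (x-y) \geq \sigma \lVert x-y\rVert^2 \geq \tfrac{\sigma}{L^2}\lVert \mathcal{W}(\nabla \textbf{F}(x)) - \mathcal{W}(\nabla \textbf{F}(y)) \rVert^2,
\]
which is the asserted inequality. The existence of $\sigma$ and $L$ carries over verbatim from the two lemmas.

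The only point needing care is bookkeeping rather than analysis: the difference vector printed in the statement reads $(y-x)$, whereas Lemma \ref{lw4} delivers the \emph{nonnegative} quantity with $(x-y)$. Since these differ only by a sign, the inequality as intended must be read with $(x-y)$—otherwise its left side is nonpositive while its right side is nonnegative, forcing both to vanish. With that reading fixed, there is no genuine obstacle; the proof is the two-line combination above, and the single hypothesis one must remember to invoke before dividing is the strict positivity $L>0$.
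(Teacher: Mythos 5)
Your proof is correct and is essentially the paper's own proof: the paper likewise obtains $L$ from Lemma \ref{lw5} (via the $gH$-Lipschitz gradient hypothesis) and then combines it with the strong-monotonicity bound of Lemma \ref{lw4} to chain $\left(\mathcal{W}(\nabla \textbf{F}(x))-\mathcal{W}(\nabla \textbf{F}(y))\right)^T(x-y) \geq \sigma\lVert x-y\rVert^2 \geq \tfrac{\sigma}{L^2}\left\lVert \mathcal{W}(\nabla \textbf{F}(x))-\mathcal{W}(\nabla \textbf{F}(y))\right\rVert^2$. Your remark on the sign is also apt: the $(y-x)$ appearing in the statement (and copied into the paper's concluding line) is a typo, since the argument only supports the $(x-y)$ reading, which is indeed the form used later in the linear-convergence theorem.
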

\begin{proof} Since $\textbf{F}$ has $gH$-Lipschitz gradient, due to Lemma \ref{lw5} we get
\[
\lVert \mathcal{W}(\nabla \textbf{F}(x))-\mathcal{W}(\nabla \textbf{F}(y)) \rVert ~\leq~ L \lVert x-y \rVert~~\text{for all}~~ x,~ y \in \mathcal{X}.
\]
By Lemma \ref{lw4} and the last inequality, we obtain
\[
(\mathcal{W}(\nabla \textbf{F}(x))-\mathcal{W}(\nabla \textbf{F}(y))^T(y-x) ~\geq~ \frac{\sigma}{L^2} \lVert \mathcal{W}(\nabla \textbf{F}(x))-\mathcal{W}(\nabla \textbf{F}(y) \rVert^2~~\text{for all}~~ x,~ y \in \mathcal{X}.
\]	
\end{proof}

\begin{thm}\emph{(Linear convergence under strong convexity)}.
Let $\mathcal{X}$ be a nonempty convex subset of $\mathbb{R}^n$. If $\textbf{F}$ is a strong convex and $gH$-differentiable IVF with $gH$-Lipschitz gradient on $\mathcal{X}$. Then,  there exists a $\sigma> 0$ and an $L>0$ such that if $\bar{x}\in\mathcal{X}$ be an efficient solution of the IOP (\ref{IOP}), the mapping $H_\alpha(x)=x-\alpha \mathcal{W}(\nabla \textbf{F}(x))$ with constant step size $\alpha\in\left[0, \frac{2\sigma}{L^2}\right]$ satisfies
\[
\rVert H_\alpha(x)-H_\alpha(\bar{x}) \rVert ~\leq~ {\lVert x-\bar{x} \rVert} ~\text{for all}~ x\in \mathcal{X}.
\]
\end{thm}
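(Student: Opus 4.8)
The plan is to reduce the assertion to the classical completion-of-squares argument for gradient-type maps, with all of the interval-specific content already discharged by Lemma~\ref{lw4} and Lemma~\ref{lw5}. Take $\sigma>0$ to be the strong-convexity modulus supplied by Lemma~\ref{lw4} and $L>0$ the Lipschitz constant supplied by Lemma~\ref{lw5}; both inequalities hold for every pair of points in $\mathcal{X}$, so in particular with the second argument fixed at $\bar{x}$. First I would record the identity, valid in $\mathbb{R}^n$,
\[
H_\alpha(x)-H_\alpha(\bar{x}) = (x-\bar{x}) - \alpha\left(\mathcal{W}(\nabla\textbf{F}(x))-\mathcal{W}(\nabla\textbf{F}(\bar{x}))\right),
\]
which shows that the entire computation takes place with the ordinary Euclidean inner product on $\mathbb{R}^n$ and involves no interval arithmetic.

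Next I would square both sides and expand:
\[
\lVert H_\alpha(x)-H_\alpha(\bar{x})\rVert^2 = \lVert x-\bar{x}\rVert^2 - 2\alpha\,(x-\bar{x})^T\!\left(\mathcal{W}(\nabla\textbf{F}(x))-\mathcal{W}(\nabla\textbf{F}(\bar{x}))\right) + \alpha^2\lVert \mathcal{W}(\nabla\textbf{F}(x))-\mathcal{W}(\nabla\textbf{F}(\bar{x}))\rVert^2.
\]
To the cross term I would apply Lemma~\ref{lw4} (with $y=\bar{x}$), giving the lower bound $(x-\bar{x})^T(\mathcal{W}(\nabla\textbf{F}(x))-\mathcal{W}(\nabla\textbf{F}(\bar{x})))\geq \sigma\lVert x-\bar{x}\rVert^2$; to the last term I would apply Lemma~\ref{lw5}, giving $\lVert \mathcal{W}(\nabla\textbf{F}(x))-\mathcal{W}(\nabla\textbf{F}(\bar{x}))\rVert^2\leq L^2\lVert x-\bar{x}\rVert^2$. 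Substituting both estimates collapses the right-hand side to
\[
\lVert H_\alpha(x)-H_\alpha(\bar{x})\rVert^2 \leq \left(1-2\alpha\sigma+\alpha^2 L^2\right)\lVert x-\bar{x}\rVert^2.
\]

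Finally I would verify that the scalar multiplier does not exceed $1$ on the prescribed range. The inequality $1-2\alpha\sigma+\alpha^2 L^2\leq 1$ is equivalent to $\alpha(\alpha L^2-2\sigma)\leq 0$, which holds exactly when $\alpha\in\left[0,\tfrac{2\sigma}{L^2}\right]$; taking square roots then yields $\lVert H_\alpha(x)-H_\alpha(\bar{x})\rVert\leq\lVert x-\bar{x}\rVert$, as claimed. I do not expect a genuine obstacle here: the argument is purely real-variable once the two monotonicity and Lipschitz lemmas are available, so the only points requiring care are matching the constants $\sigma$ and $L$ to those lemmas and pinning down the endpoints of the admissible step-size interval. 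It is worth remarking that the hypothesis ``$\bar{x}$ is an efficient solution'' is not actually invoked in this derivation — the contraction bound is valid for any reference point of $\mathcal{X}$; the efficiency of $\bar{x}$ (via Corollary~\ref{cred2}) serves only to single it out as the target of the iteration in the surrounding convergence discussion.
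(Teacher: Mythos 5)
Your proof is correct and follows essentially the same route as the paper: expand $\lVert H_\alpha(x)-H_\alpha(y)\rVert^2$, bound the cross term via Lemma \ref{lw4} and the quadratic term via the Lipschitz estimate, and note that $1-2\alpha\sigma+\alpha^2L^2\leq 1$ precisely for $\alpha\in\left[0,\tfrac{2\sigma}{L^2}\right]$. The only cosmetic differences are that you fix $y=\bar{x}$ from the outset (the paper proves the contraction for arbitrary $y$ and then substitutes $y=\bar{x}$), and you invoke Lemma \ref{lw5} for the quadratic term where the paper nominally cites Lemma \ref{lw6} --- yours is the more direct citation; your closing remark that the efficiency of $\bar{x}$ is never used in the bound is likewise consistent with the paper's own argument.
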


\begin{proof} Let $\textbf{F}$ be strong convex and $gH$-differentiable IVF with $gH$-Lipschitz gradient on $\mathcal{X}$. Therefore, for all $x,~ y \in \mathcal{X}$, we have
\begingroup\allowdisplaybreaks\begin{align*}
&\lVert H_\alpha(x)-H_\alpha(y) \rVert^2\\
~=~&\left\lVert\big(x-\alpha \mathcal{W}(\nabla \textbf{F}(x))\big)-\big(y-\alpha \mathcal{W}(\nabla \textbf{F}(y))\big) \right\rVert^2\\
~=~&\lVert x-y \rVert^2 -2\alpha (\mathcal{W}(\nabla \textbf{F}(x))-\mathcal{W}(\nabla \textbf{F}(y))^T(x-y) +\alpha^2\lVert \mathcal{W}(\nabla \textbf{F}(x))-\mathcal{W}(\nabla \textbf{F}(y)) \rVert^2\\
~\leq~ &{\lVert x-y \rVert}^2-2\alpha\sigma{\lVert x-y \rVert}^2+\alpha^2L^2{\lVert x-y \rVert}^2 \text{ by Lemma \ref{lw4} and Lemma \ref{lw6}} \\
~\leq~ & {\lVert x-y \rVert}^2 \text{ since } \alpha\in\left[0, \frac{2\sigma}{L^2}\right].
\end{align*}\endgroup
Taking $y=\bar{x}$, we have 
$
\rVert H_\alpha(x)-H_\alpha(\bar{x}) \rVert ~\leq~ {\rVert x-\bar{x} \rVert} ~~\text{for all}~~ x\in \mathcal{X}.
$
Hence, the $\mathcal{W}$-$gH$-gradient efficient method converges linearly.
\end{proof}
%
%
%

\subsection{Numerical Examples}

\noindent Here we apply the proposed algorithm of $\mathcal{W}$-$gH$-gradient efficient method on the IOPs of the following two examples and capture the efficient solutions of the IOPs.
\begin{example}\label{exgm2}
Consider the following IOP:
\begin{equation}\label{ex2iop}
\min_{x\in [-3, 7]} \textbf{F}(x)=[1, 2]\odot x^2 \oplus [-8, 0]\odot x \oplus [3, 25].
\end{equation}
\noindent The $gH$-gradient of $\textbf{F}$ is
\[
\nabla \textbf{F}(x)=[2, 4]\odot x\oplus [-8, 0]~\text{for all}~ x\in[-3, 7].
\]
The graph of the IVF $\textbf{F}$ is illustrated in the Figure \ref{fgm2} by the gray shaded region and the region of the efficient solutions is marked by bold black line on $x$-axis. From Figure \ref{fgm2} it is clear that each $x \in [0, 4]$ is an efficient solution of the IOP (\ref{ex2iop}).
\begin{figure}[H]
\begin{center}
\includegraphics[scale=0.8]{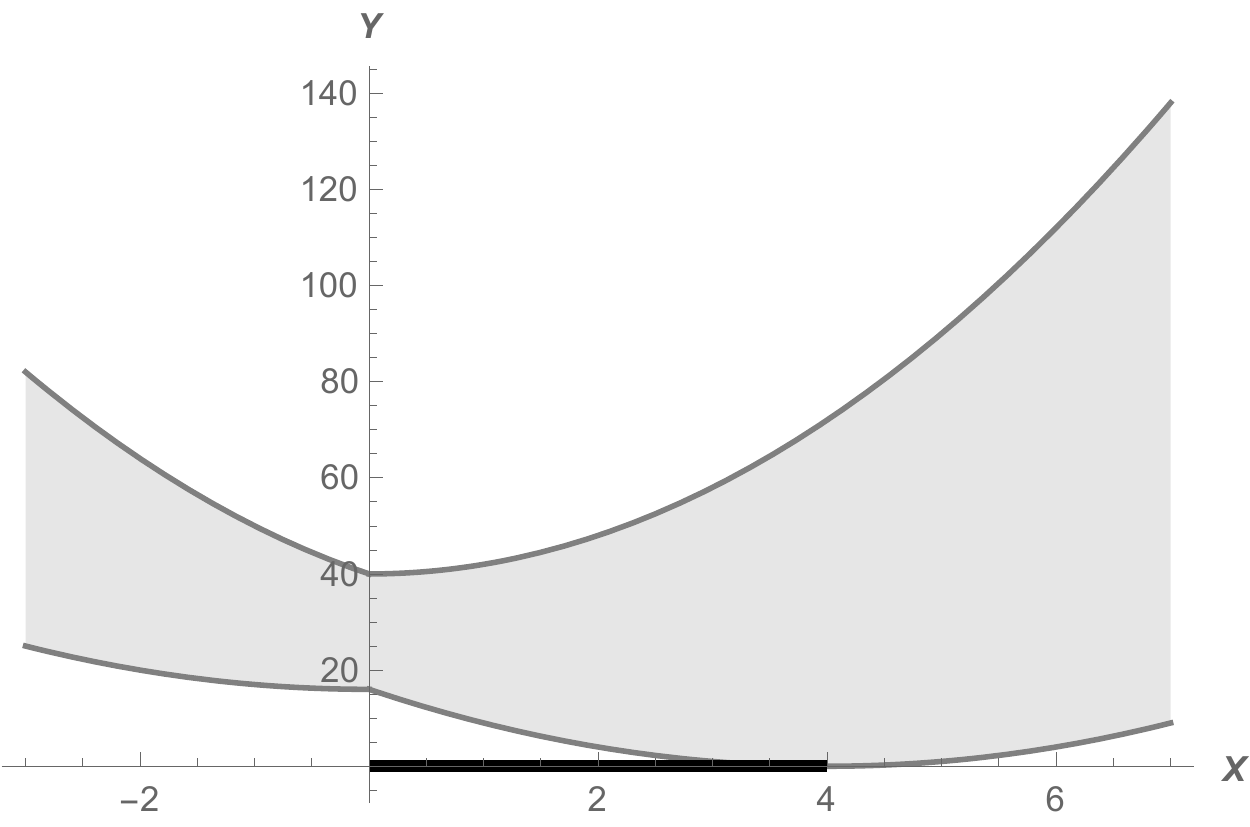}
    \caption{Interval-valued function and efficient solution of the IOP (\ref{ex2iop}) in Example \ref{exgm2}}\label{fgm2}
\end{center}
\end{figure}
Since the IOP (\ref{ex2iop}) has infinite number of efficient solutions, applying $\mathcal{W}$-$gH$-gradient efficient method for various $w$ and $w'$, we obtain few of them, which are presented in Table \ref{tablegm-1}. \\ \\
\begin{longtable}{lllll}
\caption{Output of Algorithm \ref{algoes2} to find efficient solutions of IOP (\ref{ex2iop})}\label{tablegm-1} \\
\toprule
$\multirow{2}{*} \emph{w} $ & $\multirow{2}{*} \emph{w'} $ & Initial & Number of & Efficient \\
 & & point & iterations & solution \\
\midrule
\endhead
\multirow{3}{*} {$0$} & \multirow{3}{*} {$1$} & $-2$ & $22$ & {$0$} \\
              & & $-0.5$ & $20$ & {$0$} \\
              & & $6$ & $1$ & {$3$} \\
\midrule
\multirow{3}{*} {$0.2$} & \multirow{3}{*} {$0.8$} & $-2$ & $3$ &  {$0.24$} \\
              & & $-0.5$ & $2$ & {$0.2719$} \\
              & & $6$ & $1$ & {$1.9778$} \\
\midrule
\multirow{3}{*} {$0.4$} & \multirow{3}{*} {$0.6$} & $-2$ & $2$ & {$0.2621$} \\
              & & $-0.5$ & $1$ & {$0.1549$} \\
              & & $6$ & $1$ & {$0.6875$} \\
\midrule
\multirow{3}{*} {$0.5$} & \multirow{3}{*} {$0.5$} & $-2$ & $1$ & {$0.1333$} \\
              & & $-0.5$ & $1$ & {$0.2424$} \\
              & & $6$ & $2$ & {$0.3111$} \\
\midrule
\multirow{3}{*} {$0.7$} & \multirow{3}{*} {$0.3$} & $-2$ & $1$ & {$0.7891$} \\
              & & $-0.5$ & $1$ & {$0.4489$} \\
              & & $6$ & $2$ & {$0.8051$} \\
\midrule
\multirow{3}{*} {$0.9$} & \multirow{3}{*} {$0.1$} & $-2$ & $1$ & {$2.25$} \\
              & & $-0.5$ & $1$ & {$0.7233$} \\
              & & $6$ & $2$ & {$3.6968$} \\
\bottomrule
\end{longtable}

\end{example}
As the IOP (\ref{ex2iop}) in Example \ref{exgm2} has infinite number of efficient solutions, Table \ref{tablegm-1} shows that the algorithm of $\mathcal{W}$-$gH$-gradient efficient method has stopped with different efficient solutions for different combinations of $w$, $w'$ and initial points.\\

\noindent In the next example, we consider an IOP which has only one efficient solution and we show that the algorithm of $\mathcal{W}$-$gH$-gradient efficient method will stop at the efficient solution for different combinations of $w$, $w'$ and initial points.
\begin{example}\label{exgm3}
Consider the following IOP:
\begin{equation}\label{ex3iop}
\min_{x\in \mathcal{X}\subseteq\mathbb{R}^2} \textbf{F}(x)=[2, 6]\odot (x_1-2)^2 \oplus [5, 7]\odot (x_2-3)^2 \oplus [5, 12],
\end{equation}
where $\mathcal{X}=[0, 6]\times [0, 6]$. The $gH$-gradient of $\textbf{F}$ is
\begingroup\allowdisplaybreaks\begin{align*}
\nabla \textbf{F}(x)&=(D_1\textbf{F}(x), D_2\textbf{F}(x))^T\\
&=\left ([4, 12]\odot (x_1-2),~[10,14]\odot (x_2-3)\right )^T.
\end{align*}\endgroup

\noindent We show that $\bar{x}=(2, 3)^T$ is an efficient solution to IOP (\ref{ex3iop}). On contrary, if possible, let there exist two nonzero numbers $h_1$ and $h_2$ with $0 ~\leq~ 2+h_1 ~\leq~ 6$ and $0 ~\leq~ 3+h_2 ~\leq~ 6$ such that
\begingroup\allowdisplaybreaks\begin{align*}
& \textbf{F}(2+h_1, 3+h_2) ~\prec~ \textbf{F}(2, 3)\\
\text{or, } & [2, 6]\odot h_1^2 \oplus [5, 7]\odot h_2^2 \oplus [5, 12] ~\prec~ [5, 12]\\
\text{or, } & \left([2, 6]\odot h_1^2 \oplus [5, 7]\odot h_2^2 \oplus [5, 12]\right) \ominus_{gH} [5, 12] ~\prec~ [0, 0]\\
\text{or, } & [2h_1^2+5h_2^2, 6h_1^2+7h_2^2] ~\prec~ [0, 0],
\end{align*}\endgroup
which is not possible. Thus, there does not exist any $x (\neq\bar{x})\in\mathcal{X}$ such that $\textbf{F}(x) ~\prec~ \textbf{F}(\bar{x})$. Hence, $\bar{x}$ is an efficient solution of the IOP (\ref{ex3iop}). Also, one can easily check that $\textbf{F}(\bar{x}) ~\prec~ \textbf{F}(x)$. Thus, $\bar{x}$ is the only efficient solution of the IOP (\ref{ex3iop}).\\

\noindent The IVF $\textbf{F}$ is depicted in the Figure \ref{fgm3} by the gray shaded surface and the efficient solution is pointed by black dot on $xy$-plane. The Figure \ref{fgm3} shows that $\bar{x}$ is the only efficient solution of $\textbf{F}$ on $\mathcal{X}$.
\begin{figure}[H]
\begin{center}
\includegraphics[scale=0.7]{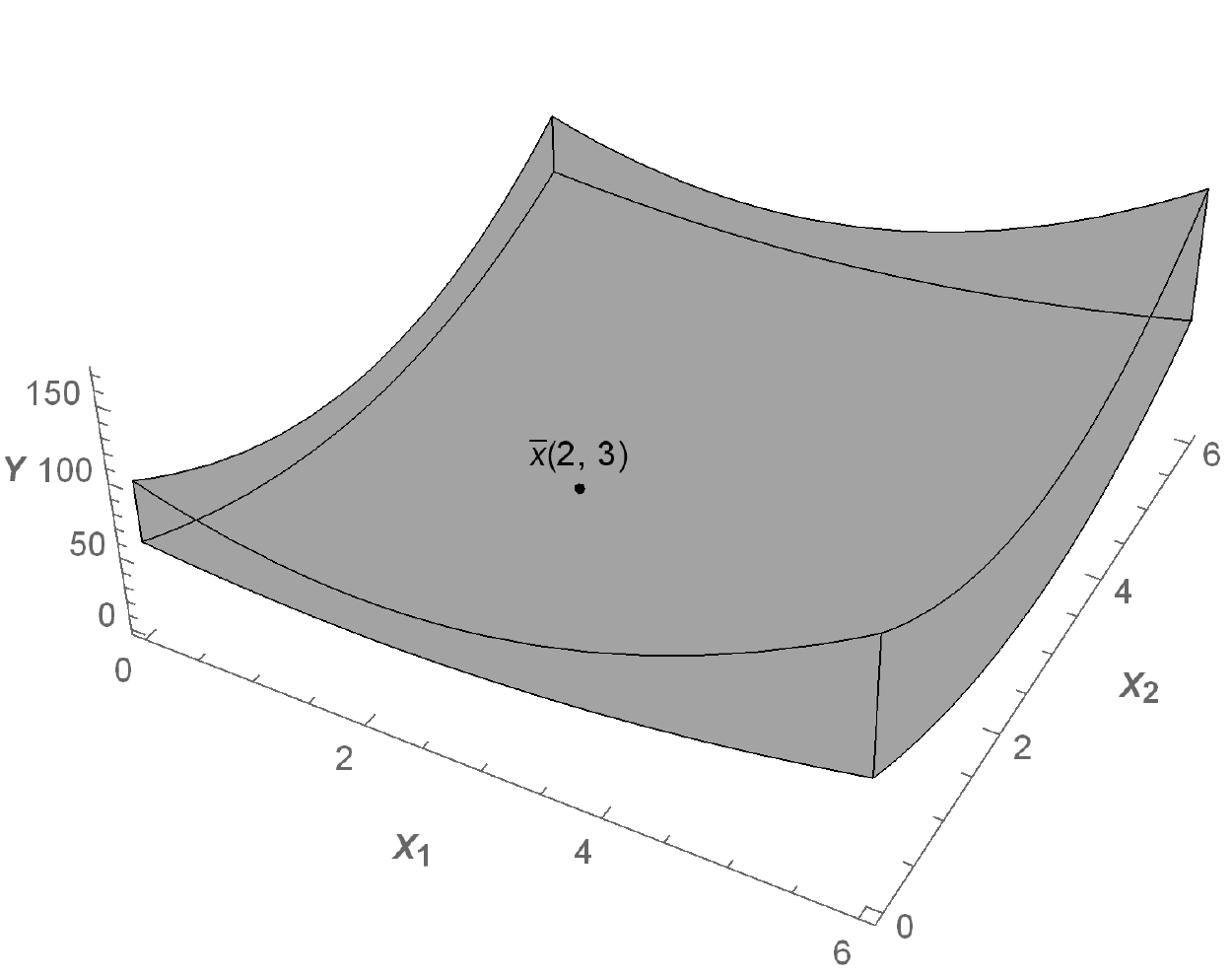}
    \caption{Interval-valued function and efficient solution of the IOP (\ref{ex3iop}) in Example \ref{exgm3}}\label{fgm3}
\end{center}
\end{figure}
From Table \ref{tablegm-2}, we see that for every combination of $w$, $w'$ and initial point, the $\mathcal{W}$-$gH$-gradient efficient method has stopped at the efficient solution $\bar{x}=(2, 3)^T$. 
\begin{longtable}{lllll}
\caption{Output of Algorithm \ref{algoes2} to find efficient solutions of IOP (\ref{ex3iop})}\label{tablegm-2} \\
\toprule
$\multirow{2}{*} \emph{w} $ & $\multirow{2}{*} \emph{w'} $ & Initial & Number of & Efficient \\
 & & point & iterations & solution \\
\midrule
\endhead
\multirow{3}{*} {$0.1$} & \multirow{3}{*} {$0.9$} & $(0, 6)^T$ & $30$ & \multirow{3}{*} {$(2, 3)^T$}\\
               &  & $(5, 2)^T$ & $21$ &  \\
               &  & $(2.5, 2.5)^T$ & $13$ &  \\
\midrule
\multirow{3}{*} {$0.3$} & \multirow{3}{*} {$0.7$} & $(0, 6)^T$ & $16$ & \multirow{3}{*} {$(2, 3)^T$}\\
               &  & $(5, 2)^T$ & $15$ &  \\
               &  & $(2.5, 2.5)^T$ & $13$ &  \\
\midrule
\multirow{3}{*} {$0.4$} & \multirow{3}{*} {$0.6$} & $(0, 6)^T$ & $12$ & \multirow{3}{*} {$(2, 3)^T$}\\
               &  & $(5, 2)^T$ & $10$ &  \\
               &  & $(2.5, 2.5)^T$ & $9$ &  \\
\multirow{3}{*} {$0.6$} & \multirow{3}{*} {$0.4$} & $(0, 6)^T$ & $10$ & \multirow{3}{*} {$(2, 3)^T$}\\
               &  & $(5, 2)^T$ & $9$ &  \\
               &  & $(2.5, 2.5)^T$ & $11$ &  \\
\midrule
\multirow{3}{*} {$0.9$} & \multirow{3}{*} {$0.1$} & $(0, 6)^T$ & $20$ & \multirow{3}{*} {$(2, 3)^T$}\\
               &  & $(5, 2)^T$ & $19$ &  \\
               &  & $(2.5, 2.5)^T$ & $18$ &  \\
\midrule
\multirow{3}{*} {$1$} & \multirow{3}{*} {$0$} & $(0, 6)^T$ & $25$ & \multirow{3}{*} {$(2, 3)^T$}\\
               &  & $(5, 2)^T$ & $21$ &  \\
               &  & $(2.5, 2.5)^T$ & $22$ &  \\
\bottomrule
\end{longtable}
\end{example}

\section{Application}\label{sa}

\noindent \hl{This section applies the $\mathcal{W}$-$gH$-gradient efficient method in solving the \emph{least square problems} for interval-valued data.}\\

Suppose a set of  $n$ pairs of data $(\textbf{X}_k, \textbf{Y}_k)$ is given, where $\textbf{Y}_k\in I(\mathbb{R})$ is the corresponding interval-valued output of $\textbf{X}_k\in I(\mathbb{R})^p$ for all $k\in\{1, 2, \ldots, n\}$. We attempt to fit a function $\boldsymbol{\mathcal{H}}\left(\cdot ;\beta\right):I(\mathbb{R})^p\to I(\mathbb{R})$, where $\beta\in \mathbb{R}^l$ is a parameter vector such that $\boldsymbol{\mathcal{H}}\left(\textbf{X}_k;\hat{\beta}\right)$ will be one of the best approximations of $\textbf{Y}_k$ for all $k\in\{1, 2, \ldots, n\}$. By `one of the best approximations' we mean that $\boldsymbol{\mathcal{H}}\left(\textbf{X} ;\beta\right)$ gives a sum square error that is nondominated. Evidently, if $\hat{\beta}$ is an efficient solution of the following IOP:
\begin{equation}\label{eef}
\min_{\beta\in\mathbb{R}^l} {\color{red}\boldsymbol{\mathcal{E}}(\beta)}=\bigoplus_{k=1}^n\left(\boldsymbol{\mathcal{H}}\left(\textbf{X}_k;\beta\right)\ominus_{gH}\textbf{Y}_k\right) \odot\left(\boldsymbol{\mathcal{H}}\left(\textbf{X}_k;\beta\right)\ominus_{gH}\textbf{Y}_k\right),
\end{equation}
then $\boldsymbol{\mathcal{H}}\left(\textbf{X};\hat{\beta}\right)$ can be considered as an efficient choice of the approximating function $\boldsymbol{\mathcal{H}}\left(\textbf{X};\beta\right)$.\\

\noindent It is noteworthy that the error function ${\color{red}\boldsymbol{\mathcal{E}}}$ and the function $\boldsymbol{\mathcal{H}}\left(\textbf{X}_k; \cdot\right)$ are IVFs from $\mathbb{R}^l$ to $I(\mathbb{R})$ for all $\textbf{X}_k$. The partial $gH$-derivative  of ${\color{red}\boldsymbol{\mathcal{E}}}$ with respect to $\beta_i$ is
\[
D_i {\color{red}\boldsymbol{\mathcal{E}}(\beta)}=2\odot\bigoplus_{k=1}^n\left(\boldsymbol{\mathcal{H}}\left(\textbf{X}_k;\beta\right)\ominus_{gH}\textbf{Y}_k\right)\odot D_i \boldsymbol{\mathcal{H}}\left(\textbf{X}_k;\beta\right)~\text{ for all }~i\in\{1, 2, \ldots, l\}.
\]
Hence, by applying the $\mathcal{W}$-$gH$-gradient efficient method on the IOP (\ref{eef}) one can easily obtain an efficient parameter vector $\hat{\beta}$ for the function $\boldsymbol{\mathcal{H}}\left(\cdot; \beta\right)$. For examples, we consider the following two types of fitting with interval-valued data.\\

\subsection{Polynomial Fitting}

\noindent Let us consider a set of $21$ pairs of interval-valued data that are displayed in Table \ref{tabledata-1}. We attempt fit a polynomial function $\boldsymbol{\mathcal{H}}^1\left(\cdot ;\beta\right):I(\mathbb{R})\to I(\mathbb{R})$, defined by
    \[
    \boldsymbol{\mathcal{H}}^1\left(\textbf{X};\beta\right)=\beta_1\odot \textbf{C} \oplus \beta_2 \odot \textbf{X} \oplus \beta_3 \odot \textbf{X}^2,
    \]
where $\textbf{C}$ is a constant interval and $\beta=(\beta_1, \beta_2, \beta_3) \in \mathbb{R}^3$. Therefore, for each $k=1,\ 2,\ \ldots, \ 21$, the partial $gH$-derivative s of $\boldsymbol{\mathcal{H}}^1\left(\textbf{X}_k; \cdot \right)$ with respect to $\beta_1$, $\beta_2$ and $\beta_3$ are
\[
D_1 \boldsymbol{\mathcal{H}}^1\left(\textbf{X}_k; \beta \right) = \textbf{C},~~D_2 \boldsymbol{\mathcal{H}}^1\left(\textbf{X}_k; \beta \right) = \textbf{X}_k ~\text{ and }~D_3 \boldsymbol{\mathcal{H}}\left(\textbf{X}_k; \beta \right) = \textbf{X}_k^2, \text{ respectively}.
\]
\begin{table}[H]
\caption{Data for polynomial fitting}\label{tabledata-1}
\begin{center}
\begin{tabular}{llll}
\toprule
$\textbf{X}_k$ & $\textbf{Y}_k$ & $\textbf{X}_k$ & $\textbf{Y}_k$\\
\midrule
$[-1.99, -1.86]$ & $[1.21, 2.60]$ & $[-1.80, -1.66]$ & $[0.72, 2.00]$ \\

$[-1.61, -1.54]$ & $[0.43, 1.67]$ & $[-1.48, -1.33]$ & $[0.20, 1.39]$ \\

$[-1.29, -1.14]$ & $[0.01, 1.22]$ & $[-1.08, -0.96]$ & $[-0.28, 1.00]$ \\

$[-0.89, -0.73]$ & $[-0.60, 0.60]$ & $[-0.68, -0.51]$ & $[-0.8, 0.34]$ \\

$[-0.45, -0.29]$ & $[-0.95, 0.12]$ & $[-0.23, -0.03]$ & $[-1.12, -0.10]$ \\

$[0.01, 0.15]$ & $[-1.24, 0.00]$ & $[0.19, 0.33]$ & $[-1.3, 0.01]$ \\

$[0.39, 0.54]$ & $[-1.32, 0.02]$ & $[0.60, 0.74]$ & $[-1.25, 0.10]$ \\

$[0.79, 0.93]$ & $[-1.05, 0.13]$ & $[0.98, 1.13]$ & $[-0.96, 0.20]$ \\

$[1.19, 1.33]$ & $[-0.69, 0.40]$ & $[1.39, 1.54]$ & $[-0.34, 0.78]$ \\

$[1.60, 1.74]$ & $[-0.03, 1.09]$ & $[1.79, 1.95]$ & $[0.24, 1.47]$ \\

$[2.00, 2.15]$ & $[0.55, 1.80]$ \\
\bottomrule
\end{tabular}
\end{center}
\end{table}

Considering $\textbf{C}=[1.70, 12.00]$ and an initial $\beta = (6, -8, 9)^T$, and applying Algorithm \ref{algoes2} on the IOP (\ref{eef}) corresponding to the function $\boldsymbol{\mathcal{H}}^1\left(\cdot ;\beta\right)$ with $w = 0.3$, we obtain the value of $\hat{\beta}$ equal to $(-0.0876, -0.2974, 0.5458)^T$ in $24$ iterations. With $w = 0.5$, we obtain the value of $\hat{\beta}$ equal to $(-0.0896, -0.2777, 0.5352)^T$ in $10$ iterations. \\

In both the figures of Figure \ref{fpf} show the comparison of the actual interval-valued output $\textbf{Y}_k$ with the estimated output $\boldsymbol{\mathcal{H}}^1\left(\textbf{X}_k ;\hat{\beta}\right)$ of the interval-valued data $\textbf{X}_k$ in polynomial fitting for the values of $w$ equal to $0.3$ and $0.5$, respectively, for $k = 1,\ 2,\ \ldots, \ 21$. The common portions of $\textbf{Y}_k$ with $\boldsymbol{\mathcal{H}}^1\left(\textbf{X}_k;\hat{\beta}\right)$ are depicted by orange regions, where as the extended portions of $\textbf{Y}_k$ and $\boldsymbol{\mathcal{H}}^1\left(\textbf{X}_k ;\hat{\beta}\right)$ are illustrated by red and yellow regions, respectively.

\begin{figure}[H]
\begin{center}
\begin{subfigure}[For $w=0.3$]
    {\includegraphics[scale=0.26]{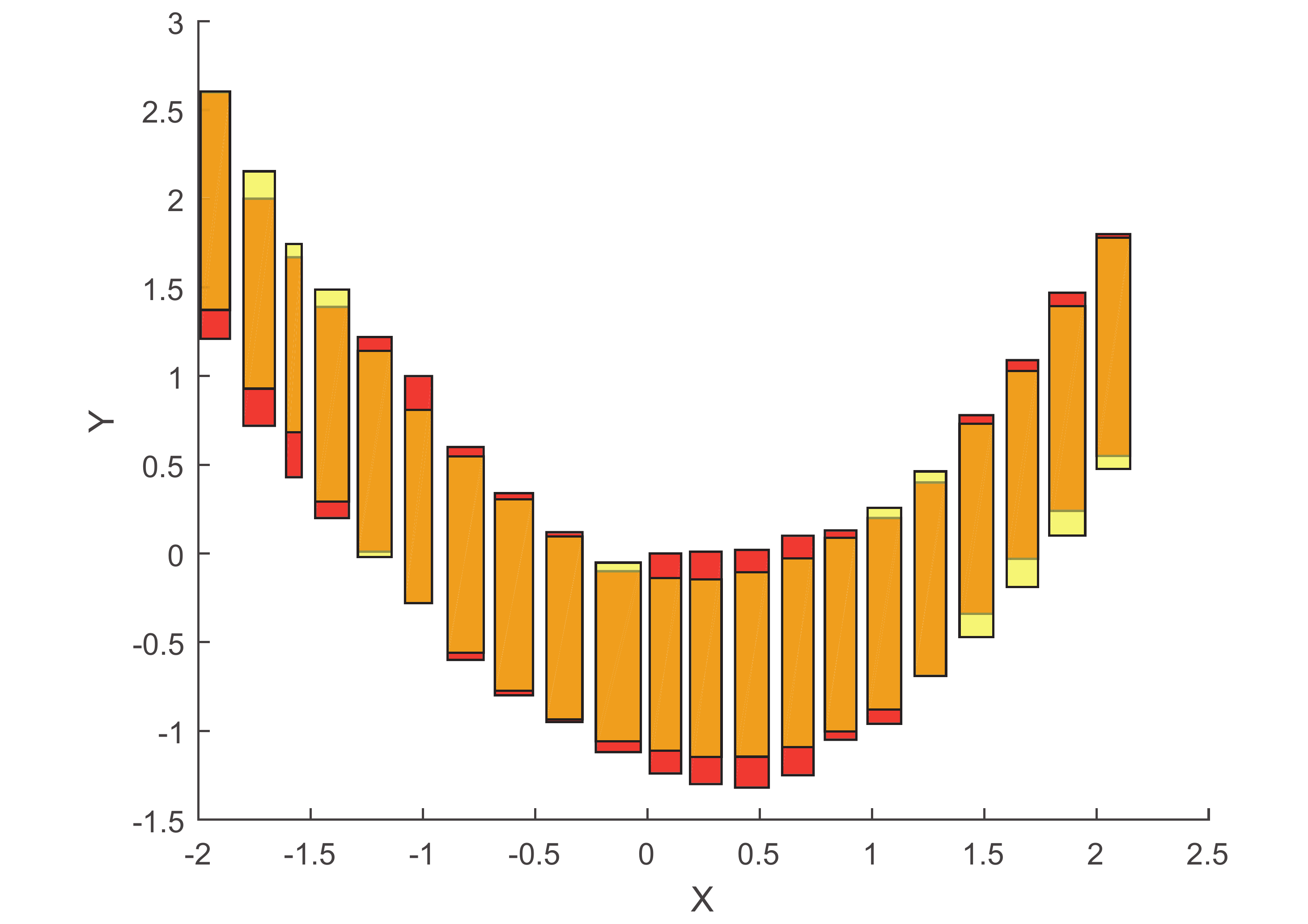}}\label{fig_ploy_fit_1a}
    \end{subfigure}
    \hfill
    \begin{subfigure}[For $w=0.5$]
    {\includegraphics[scale=0.26]{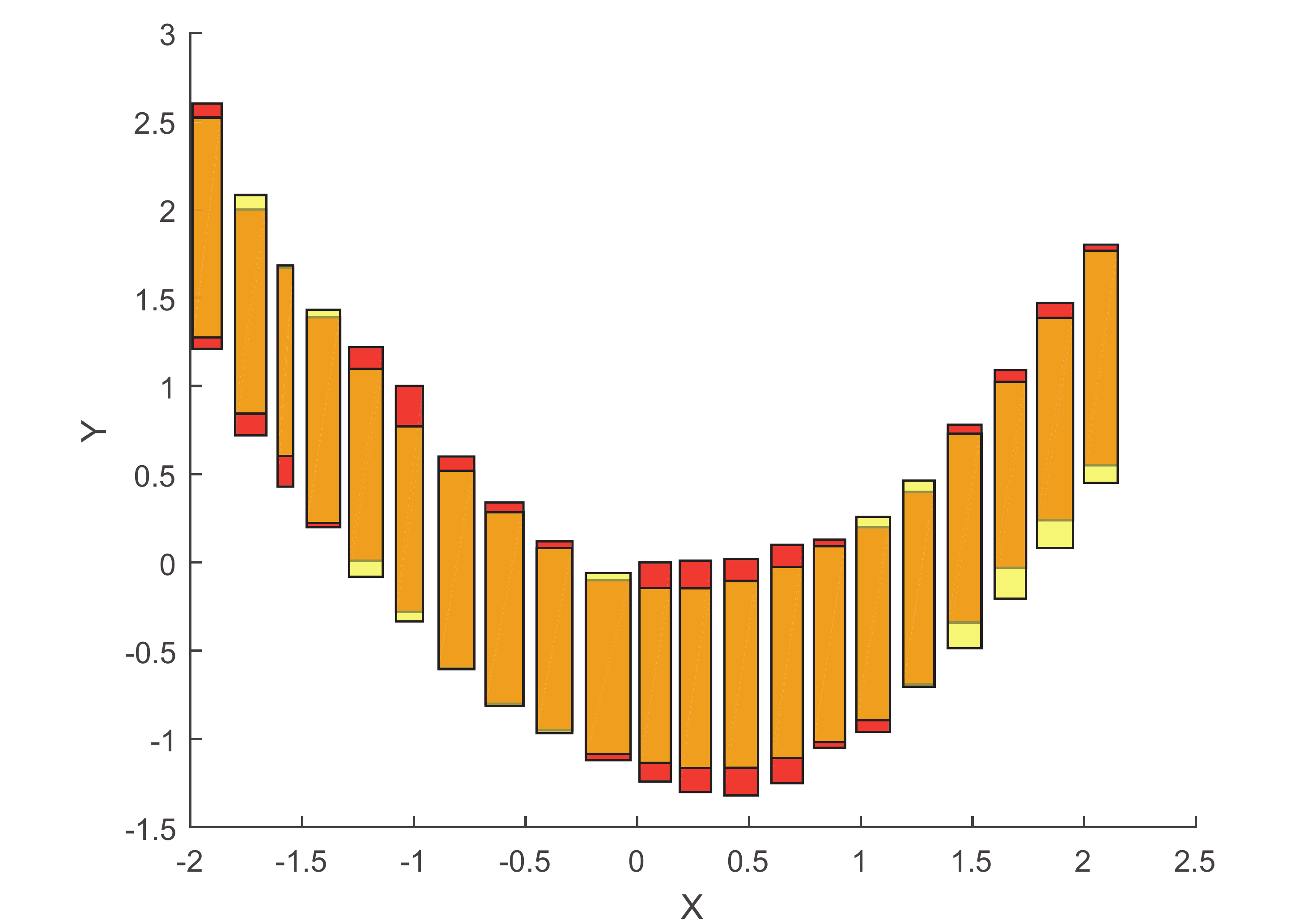}}\label{fig_ploy_fit_1b}
    \end{subfigure}
    \caption{Comparison of the actual $\textbf{Y}_k$ with the estimated output $\boldsymbol{\mathcal{H}}^1\left(\textbf{X}_k;\hat{\beta}\right)$, by the polynomial fitting, of the interval-valued data $\textbf{X}_k$ in Table \ref{tabledata-1}, $k = 1, 2, \ldots, 21$}\label{fpf}
\end{center}
\end{figure}

\subsection{Logistic Fitting}

\noindent Let us consider a set of $15$ pairs of interval-valued data that are displayed in Table \ref{tabledata-2} and we fit a logistic function $\boldsymbol{\mathcal{H}}^2\left(\cdot ;\beta\right):I(\mathbb{R})\to I(\mathbb{R})$ defined by
    \[
    \boldsymbol{\mathcal{H}}^2\left(\textbf{X} ;\beta\right)=\frac{\textbf{1}}{\textbf{1}\oplus e^{-(\beta_1\odot \textbf{C} \oplus \beta_2 \odot \textbf{X})}},
    \]
    where $\textbf{C}$ is a constant interval and $\beta=(\beta_1, \beta_2) \in \mathbb{R}^2$. Thus, the partial $gH$-derivative s of $\boldsymbol{\mathcal{H}}^2\left(\textbf{X}_k; \cdot \right)$ with respect to  $\beta_1$ and $\beta_2$ are
\[
D_1 \boldsymbol{\mathcal{H}}^2\left(\textbf{X}_k; \beta \right)= \frac{\textbf{1}}{\left[\textbf{1}\oplus e^{-(\beta_1\odot \textbf{C} \oplus \beta_2 \odot \textbf{X}_k)}\right]^2}\odot \textbf{C}
~\text{ and }~ D_2 \boldsymbol{\mathcal{H}}\left(\textbf{X}_k; \beta \right)= \frac{\textbf{1}}{\left[\textbf{1}\oplus e^{-(\beta_1\odot \textbf{C} \oplus \beta_2 \odot \textbf{X}_k)}\right]^2}\odot \textbf{X}_k,
\]
respectively, for all $k= 1, 2, \ldots, 15$.
\begin{table}[H]
\caption{Data for logistic fitting}\label{tabledata-2}
\begin{center}
\begin{tabular}{llll}
\toprule
$\textbf{X}_k$ & $\textbf{Y}_k$ & $\textbf{X}_k$ & $\textbf{Y}_k$\\
\midrule
$[-2.70, -2.55]$ & $[10^{-8}, 13 \times 10^{-5}]$ & $[-2.51, -2.35]$ & $[6 \times 10^{-8}, 10^{-3}]$ \\

$[-2.32, -2.23]$ & $[3\times 10^{-7}, 0.003]$ & $[-2.19, -2.02]$ & $[10^{-6}, 0.02]$ \\

$[-2.00, -1.83]$ & $[6\times 10^{-6}, 0.078]$ & $[-1.79, -1.65]$ & $[3\times 10^{-5}, 0.301]$ \\

$[-1.60, -1.42]$ & $[25\times10^{-5}, 0.760]$ & $[-1.39, -1.20]$ & $[15\times10^{-4}, 0.930]$ \\

$[-1.16, -0.98]$ & $[0.012, 0.980]$ & $[-0.94, -0.72]$ & $[0.080, 0.990]$ \\

$[-0.69, -0.50]$ & $[0.440, 0.998]$ & $[-0.46, -0.28]$ & $[0.880, 0.999]$ \\

$[-0.24, -0.02]$ & $[0.981, 1.000]$ & $[0.02, 0.21]$ & $[0.997, 0.999]$ \\

$[0.26, 0.44]$ & $[0.998, 1.000]$ \\
\bottomrule
\end{tabular}
\end{center}
\end{table}

Taking $\textbf{C} = [1.30, 3.40]$ and the initial value of $\beta$ as $(7, -4)^T$, and applying Algorithm \ref{algoes2} with $w=0.7$ on the IOP (\ref{eef}) corresponding to the function $\boldsymbol{\mathcal{H}}^2\left(\cdot ;\beta\right)$, we obtain the value of $\hat{\beta}$ as $(3.3940,  8.5835)^T$ in $10$ iterations.\\

Figure \ref{flf} shows the comparison of the actual interval-valued output $\textbf{Y}_k$ with the estimated output $\boldsymbol{\mathcal{H}}^2\left(\textbf{X}_k ;\hat{\beta}\right)$ of the interval-valued data $\textbf{X}_k$ in logistic fitting for the value of $w = 0.7$, $k = 1, 2, \ldots, 12$. The common portions of $\textbf{Y}_k$ with $\boldsymbol{\mathcal{H}}^2\left(\textbf{X}_k ;\hat{\beta}\right)$ are illustrated by orange regions. The extended portions of $\textbf{Y}_k$ and $\boldsymbol{\mathcal{H}}^2\left(\textbf{X}_k ;\hat{\beta}\right)$ are depicted by red and yellow regions, respectively.

\begin{figure}[H]
\begin{center}
\includegraphics[scale=0.35]{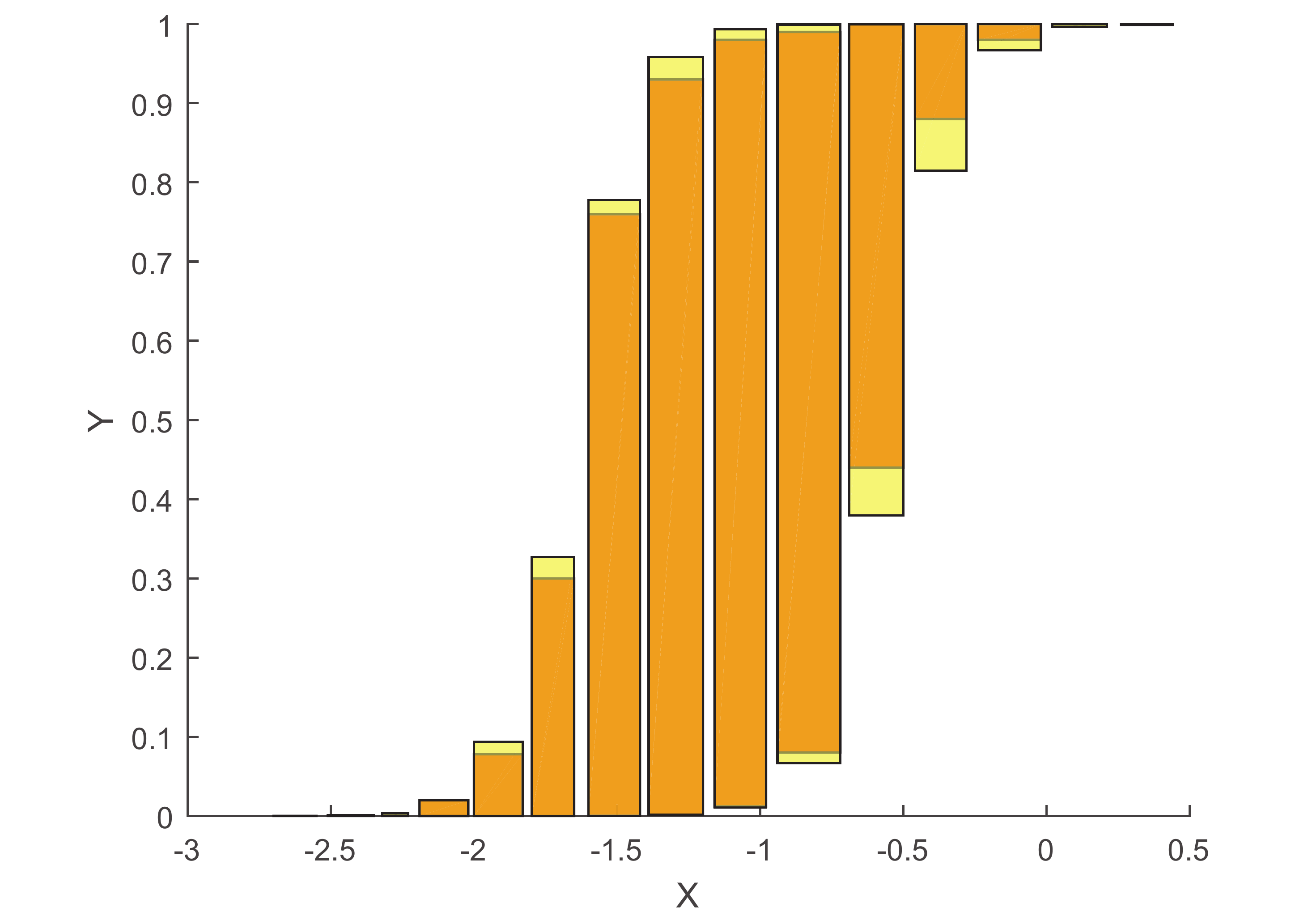}
  \caption{Comparison of the actual $\textbf{Y}_k$ with the estimated output $\boldsymbol{\mathcal{H}}^2\left(\textbf{X}_k ;\beta\right)$ of the interval-valued data $\textbf{X}_k$ in logistic fitting for the data in Table \ref{tabledata-2}, $k = 1, 2, \ldots, 15$}\label{flf}
\end{center}
\end{figure}

\section{Conclusion and Future Directions} \label{scf}

\noindent In this article, a general $gH$-gradient efficient-direction method and a $\mathcal{W}$-gradient efficient method for IOPs have been developed. The convergence analysis and the algorithmic implementations of both the methods have been presented. It has been shown that the $\mathcal{W}$-gradient efficient method converges linearly for a strongly convex interval-valued objective function. In the sequel, a few topics of calculus and convexity of IVFs have been proposed which were needed for the development of the methods. For a $gH$-differentiable IVFs, the relation between convexity and the gradient of a $gH$-differentiable IVF and an optimality condition of an IOP have been derived. Further, a notion of efficient-direction for IVFs has been introduced which is used to develop the general $gH$-gradient efficient and $\mathcal{W}$-gradient efficient methods. The proposed $\mathcal{W}$-gradient efficient method has been used to solve the least square problems with interval-valued data. The application has been exemplified by a polynomial fitting and a logistic curve fitting. \\

{\color{red}In connection with the proposed research, future research can evolve in several directions as follows.}
\begin{itemize}
    \item One may attempt to develop an appropriate approach to find the step lengths $\alpha_k$'s in Algorithms \ref{algoes1} and \ref{algoes2}. Towards this, one can also concentrate on developing exact or inexact line search techniques for IOPs.

    \item It is notable that in the definition of strongly convex IVFs (Definition \ref{strongly_convex_IVF}), we have taken a degenerate interval $[\sigma, \sigma]$ for some $\sigma > 0$. One can attempt to make a generalization of the used  $[\sigma, \sigma]$ to $[\sigma_1,\sigma_2]$.

    \item Analysis of the proposed method for more flexible IVFs, especially for nonconvex  IVFs can be performed in the future.

    \item One may attempt to apply the proposed logistic regression on the classification problems with interval-valued data.

    \item In the future, a rigorous error analysis of the red and yellow portions of Figure \ref{fpf} and Figure \ref{flf} can be performed.

    \item Future research can be made on applying the proposed methods in constrained least square problems with interval-valued data.

    \item Applications of least square technique in practical problems can be performed.
    
    \item {\color{red} Also in the future, one can try to develop the proposed methods of this article based on constrained interval analysis \cite{lodwick2018}}.\\
    
\end{itemize}

%
%
%

\appendix

\section{Proof of Lemma \ref{ldr1}} \label{apdr}
\begin{proof}
\begin{enumerate}[(i)]
\item \begingroup\allowdisplaybreaks\begin{align*}
\textbf{A} ~\preceq~ \textbf{B} & \Longleftrightarrow [\underline{a}, \overline{a}] ~\preceq~ [\underline{b}, \overline{b} ]\\
& \Longleftrightarrow \underline{a} ~\leq~ \underline{b} ~\text{and}~ \overline{a} ~\leq~ \overline{b}\\
& \Longleftrightarrow \underline{a}-\underline{b} ~\leq~ 0 ~\text{and}~ \overline{a} -\overline{b} ~\leq~ 0\\
& \Longleftrightarrow \left[\min\left\{\underline{a}-\underline{b}, \overline{a} -\overline{b}\right\},~\max\left\{\underline{a}-\underline{b}, \overline{a} -\overline{b}\right\}\right] ~\preceq~ \textbf{0} \\
& \Longleftrightarrow \textbf{A}\ominus_{gH}\textbf{B} ~\preceq~ \textbf{0}.
\end{align*}\endgroup
\item According to the Definition \ref{dghd}, $\textbf{A}=\textbf{B}\Longleftrightarrow\textbf{A}\ominus_{gH}\textbf{B}=\textbf{0}$.\\
If $\textbf{A} ~\neq~ \textbf{B}$, then
\begingroup\allowdisplaybreaks\begin{align*}
\textbf{A}~\nprec~ \textbf{B} & \Longleftrightarrow [\underline{a}, \overline{a}] ~\nprec~ [\underline{b}, \overline{b} ]\\
& \Longleftrightarrow \text{at least one of the inequalities}~~\underline{a} ~>~ \underline{b}~~\text{and}~~\overline{a} ~>~ \overline{b}~~\text{is true}\\
& \Longleftrightarrow \max\left\{\underline{a}-\underline{b}, \overline{a} -\overline{b}\right\} ~>~ 0 \\
& \Longleftrightarrow \textbf{A}\ominus_{gH}\textbf{B} ~\nprec~ \textbf{0}.
\end{align*}\endgroup
\end{enumerate}
\end{proof}
\section{Proof of Lemma \ref{lc1}} \label{aplc1}
\begin{proof}
Let $\textbf{F}$ be $gH$-continuous at a point $\bar{x}$ of the set $\mathcal{X}$. Thus, for any $d\in\mathbb{R}^n$ such that $\bar{x}+d\in\mathcal{X}$,
\[
\lim_{\lVert d \rVert\to 0}\left(\textbf{F}(\bar{x}+d)\ominus_{gH}\textbf{F}(\bar{x})\right)=\textbf{0},
\]
which implies
\[
\lim_{\lVert d \rVert\to 0}\left([\underline{f}(\bar{x}+d),\ \overline{f}(\bar{x}+d)]\ominus_{gH}[\underline{f}(\bar{x}),\ \overline{f}(\bar{x})]\right)=[0,\ 0].
\]
Hence, by the definition of $gH$-difference we have
\[
\lim_{\lVert d \rVert\to 0}(\underline{f}(\bar{x}+d)-\underline{f}(\bar{x}))\to 0\ \mbox{and}\ \lim_{\lVert d \rVert\to 0}(\overline{f}(\bar{x}+d)-\overline{f}(\bar{x}))\to 0,
\]
i.e., $\underline{f}$ and $\overline{f}$ are continuous at $\bar{x}\in\mathcal{X}$.\\

Conversely, let the functions $\underline{f}$ and $\overline{f}$ be continuous at $\bar{x}\in\mathcal{X}$. If possible, let $\textbf{F}$ be not $gH$-continuous at $\bar{x}$. Then, as $\lVert d \rVert\to 0,\ (\textbf{F}(\bar{x}+d)\ominus_{gH}\textbf{F}(\bar{x}))\not\to\textbf{0}$. Therefore, as $\lVert d \rVert\to 0$ at least one of the functions $(\underline{f}(\bar{x}+d)-\underline{f}(\bar{x}))$ and $(\overline{f}(\bar{x}+d)-\overline{f}(\bar{x}))$ does not tend to $0$. So it is clear that at least one of the functions $\underline{f}$ and $\overline{f}$ is not continuous at $\bar{x}$. This contradicts the assumption that the functions $\underline{f}$ and $\overline{f}$ both are continuous at $\bar{x}$. Hence, $\textbf{F}$ is $gH$-continuous at $\bar{x}$. \\
\end{proof}
\section{Proof of Lemma \ref{lsc}} \label{aplsc}
\begin{proof}
The result is followed by Lemma \ref{lc1} and the fact that the real-valued functions $\underline{f}$ and $\overline{f}$ are continuous at the point $\bar{x}\in\mathcal{X}$ if and only if for every sequence $\{x_n\}$ in $\mathcal{X}$ converging to $\bar{x}$, the sequences $\{\underline{f}(x_n)\}$ and $\{\overline{f}(x_n)\}$ converge to $\underline{f}(\bar{x})$ and $\overline{f}(\bar{x})$, respectively. \\
\end{proof}
\section{Proof for the problem in Remark \ref{exlivf2}} \label{aplivf}
\begin{proof}
Due to Definition \ref{dlivf},
\[
\textbf{F}(y)=\bigoplus_{i=1}^n y_i\odot\textbf{F}(e_i)~ \text{for all}~y=(y_1, y_2,\ldots,y_n)^T\in \mathcal{X}.
\]
Replacing $y$ by $\lambda x$, where $\lambda \in \mathbb{R}$ and $x=(x_1, x_2,\ldots,x_n)^T\in \mathcal{X}$, we have
\begingroup\allowdisplaybreaks\begin{align*}
\textbf{F}(\lambda x) = \bigoplus_{i=1}^n \lambda x_i\odot\textbf{F}(e_i) = \lambda \odot \bigoplus_{i=1}^n x_i\odot\textbf{F}(e_i) = \lambda\odot\textbf{F}(x) \end{align*}
\endgroup

\noindent Let us consider a pair of elements $x=(x_1, x_2,\ldots, x_n)^T$ and $y=(y_1, y_2,\ldots,y_n)^T$ of $\mathcal{X}$. If the corresponding $x_i$ and $y_i$ are of same sign for all $i\in\{1,\ 2,\ \ldots,\ n\}$, then
\begingroup\allowdisplaybreaks\begin{align*}
\textbf{F}(x)\oplus\textbf{F}(y)&=\bigoplus_{i=1}^n x_i\odot\textbf{F}(e_i)\oplus\bigoplus_{i=1}^n y_i\odot\textbf{F}(e_i)\\
&=\bigoplus_{i=1}^n (x_i+y_i)\odot\textbf{F}(e_i), ~~\text{by Remark \ref{ria1}}\\
&=\textbf{F}(x+y).
\end{align*}\endgroup
Let a few of the $x_i$'s and the corresponding $y_i$'s are of different signs. Without loss of generality, let the first $p$ number of $x_i$'s are of same signs with their corresponding $y_i$'s, thereafter consequtive $q-p$ numbers of $x_i$'s are nonnegative but corresponding $y_i$'s are nonpositive, and the last $n-q$ numbers of $x_i$'s are nonpositive but corresponding $y_i$'s are nonnegative, where $p~\leq~ q ~\leq~ n$. Also, let
\[
\textbf{F}(e_i)=\textbf{A}_i=[\underline{a}_i, \overline{a}_i],~\text{for all}~e_i,~ i \in \{1, 2, \ldots, n\}
\]
Then, we have
\begingroup\allowdisplaybreaks\begin{align*}
&~\textbf{F}(x)\oplus\textbf{F}(y) ~=~\left(\bigoplus_{k=1}^p x_k\odot\textbf{F}(e_k) \oplus\bigoplus_{l=p+1}^q x_l\odot\textbf{F}(e_l) \oplus\bigoplus_{m=q+1}^n x_m\odot\textbf{F}(e_m)\right)\\
&~~~~~~~~~~~~~~~~~~~~~~~~~~~\oplus\left(\bigoplus_{k=1}^p y_k\odot\textbf{F}(e_k) \oplus\bigoplus_{l=p+1}^q y_l\odot\textbf{F}(e_l) \oplus\bigoplus_{m=q+1}^n y_m\odot\textbf{F}(e_m)\right)\\
=~&\bigoplus_{k=1}^p (x_k+y_k)\odot\textbf{F}(e_k) \oplus\bigoplus_{l=p+1}^q \left(x_l\odot\textbf{F}(e_l) \oplus y_l\odot\textbf{F}(e_l)\right)\\
&~~~~~~~~~~~~~~~~~~~~~~~~~~~\oplus\bigoplus_{m=q+1}^n \left(x_m\odot\textbf{F}(e_m) \oplus y_m\odot\textbf{F}(e_m)\right)\\
=~&\textbf{B} \oplus\bigoplus_{l=p+1}^q \left(x_l\odot\textbf{F}(e_l) \ominus v_l\odot\textbf{F}(e_l)\right)\oplus\bigoplus_{m=q+1}^n \left(y_m\odot\textbf{F}(e_m) \ominus u_m\odot\textbf{F}(e_m)\right),\\
&~~~~~~~~~~~~~~~~~~~~~\text{where}~\textbf{B}=\bigoplus_{k=1}^p (x_k+y_k)\odot\textbf{F}(e_k),~v_l=-y_l~\text{and}~u_m=-x_m\\
&~~~~~~~~~~~~~~~~~~~~~~~~~~~~~~~~~~~~\text{for all}~l=p+1,~p+2,~\ldots,~q~\text{and}~m=q+1,~q+2,~\ldots,~r \\
=~&\textbf{B}\oplus\Bigg[\sum_{l=p+1}^q(\underline{a}_lx_l-\overline{a}_lv_l) +\sum_{m=q+1}^n(\underline{a}_my_m-\overline{a}_mu_m), \\
&~~~~~~~~~~~~~~~~~~~~~\sum_{l=p+1}^q(\overline{a}_lx_l-\underline{a}_lv_l) +\sum_{m=q+1}^n(\overline{a}_my_m-\underline{a}_mu_m) \Bigg]\\
=~&\textbf{B}\oplus\textbf{C},~~~\text{where}~\underline{c}=\sum_{l=p+1}^q(\underline{a}_lx_l-\overline{a}_lv_l) +\sum_{m=q+1}^n(\underline{a}_my_m-\overline{a}_mu_m)\\
&~~~~~~~~~~~~~~~~~~~~\text{and}~\overline{c}=\sum_{l=p+1}^q(\overline{a}_lx_l-\underline{a}_lv_l) +\sum_{m=q+1}^n(\overline{a}_my_m-\underline{a}_mu_m).
\end{align*}\endgroup
Further, we note that
\begingroup\allowdisplaybreaks\begin{align*}
\textbf{F}(x+y)
=~&\bigoplus_{k=1}^p (x_k+y_k)\odot\textbf{F}(e_k) \oplus\bigoplus_{l=p+1}^q \left(x_l+y_l\right)\odot\textbf{F}(e_l)\oplus\bigoplus_{m=q+1}^n \left(x_m+y_m\right)\odot\textbf{F}(e_m) \\
=~&\textbf{B} \oplus\bigoplus_{l=p+1}^q \left(x_l-v_l\right)\odot\textbf{F}(e_l)\oplus\bigoplus_{m=q+1}^n \left(y_m-u_m\right)\odot\textbf{F}(e_m)\\
=~&\textbf{B}\oplus\bigoplus_{l=p+1}^q\left(x_l-v_l\right)\odot\left[\underline{a}_l, \overline{a}_l\right]\oplus\bigoplus_{m=q+1}^n \left(y_m-u_m\right)\odot\left[\underline{a}_m, \overline{a}_m\right]
\end{align*}\endgroup
Again, without loss of generality, we let that among $q-p$ numbers of $x_l-v_l$ the first $r-p$ elements are nonnegative and rest are nonpositive, where $p~\leq~ r~\leq~ q$. Similarly, also we let that among $n-q$ numbers of $y_m-u_m$ the first $s-q$ elements are nonnegative and rest are nonpositive, where $q~\leq~ s~\leq~ n$. Then, we have
$
\textbf{F}(x+y)=\textbf{B}\oplus\textbf{D},
$
where
\[
\underline{d}=\sum_{l=p+1}^r\underline{a}_l\left(x_l-v_l\right) +\sum_{l=r+1}^q\overline{a}_l\left(x_l-v_l\right) +\sum_{m=q+1}^s\underline{a}_l\left(y_m-u_m\right) +\sum_{m=s+1}^n\overline{a}_l\left(y_m-u_m\right)
\]
and
\[
\overline{d}=\sum_{l=p+1}^r\overline{a}_l\left(x_l-v_l\right) +\sum_{l=r+1}^q\underline{a}_l\left(x_l-v_l\right) +\sum_{m=q+1}^s\underline{a}_l\left(y_m-u_m\right) +\sum_{m=s+1}^n\overline{a}_l\left(y_m-u_m\right).
\]
Since all $x_l$, $v_l$, $y_m$, $u_m$ are positive for all $p+1~\leq~ l ~\leq~ q$ and $q+1 ~\leq~ m ~\leq~ n$, we obtain
\begingroup\allowdisplaybreaks\begin{align*}
&\underline{a}_lx_l-\overline{a}_lx_l~\leq~\underline{a}_l(x_l-v_l), ~~\underline{a}_lx_l-\overline{a}_lx_l~\leq~\overline{a}_l(x_l-v_l),\\
&\underline{a}_my_m-\overline{a}_mu_m~\leq~\underline{a}_m(y_m-u_m), ~~\underline{a}_my_m-\overline{a}_mu_m~\leq~\overline{a}_m(y_m-u_m),\\
&\overline{a}_lx_l-\underline{a}_lx_l~\geq~\overline{a}_l(x_l-v_l), ~~\overline{a}_lx_l-\underline{a}_lx_l~\geq~\underline{a}_l(x_l-v_l),\\
&\overline{a}_my_m-\underline{a}_mu_m~\geq~\overline{a}_m(y_m-u_m),~~\text{and}~~\overline{a}_my_m-\underline{a}_mu_m~\geq~\underline{a}_m(y_m-u_m).
\end{align*}\endgroup
Thus, we get
\begingroup\allowdisplaybreaks\begin{align*}
&\underline{c}~\leq~ \underline{d}~~\text{and}~~\overline{c}~\geq~\overline{d},\\
&\underline{b}+\underline{c}~\leq~ \underline{b}+\underline{d}~~\text{and}~~\overline{b}+\overline{c} ~\geq~\overline{b}+\overline{d}.
\end{align*}\endgroup
Therefore, either $\textbf{F}(x)\oplus\textbf{F}(y)=\textbf{B}\oplus\textbf{C}$ and $\textbf{F}(x+y)=\textbf{B}\oplus\textbf{D}$ are equal or none of them dominates the other for all $x$ and $y$ in $\mathcal{X}$.
\end{proof}
\section{Proof of Lemma \ref{ld1}} \label{apld1}
\begin{proof}
Let $\textbf{F}$ be $gH$-differentiable at $\bar{x}\in\mathcal{X}$. By Definition \ref{dghd}, there exists a $\delta ~>~ 0$ such that
\begin{equation}\label{edf1}
\left(\textbf{F}(\bar{x}+d) \ominus_{gH} \textbf{F}(\bar{x})\right) \ominus_{gH} \textbf{L}_{\bar{x}}(d)=\lVert d \rVert \odot \textbf{E}(\textbf{F}(\bar{x}); d) ~~\text{for all}~~\lVert d \rVert ~<~ \delta,
\end{equation}
where $\textbf{E}(\textbf{F}(\bar{x}); d) \rightarrow \textbf{0}$ as $\lVert d \rVert \rightarrow 0$.\\

\noindent Considering $d = \lambda h$ for $\lambda ~\neq~ 0$ and $h \in \mathbb{R}^n$ with $|\lambda|\lVert h \rVert ~<~ \delta$, from the equation (\ref{edf1}), we obtain
\begingroup\allowdisplaybreaks\begin{align*}
&\tfrac{1}{\lambda} \odot \left[\left(\textbf{F}(\bar{x} + \lambda h) \ominus_{gH} \textbf{F}(\bar{x})\right) \ominus_{gH} \textbf{L}_{\bar{x}}(\lambda h)\right] = \tfrac{|\lambda|\lVert d \rVert}{\lambda} \odot \textbf{E}(\textbf{F}(\bar{x}); h) \\
\text{or, } &\tfrac{1}{\lambda} \odot \left(\textbf{F}(\bar{x} + \lambda h) \ominus_{gH} \textbf{F}(\bar{x})\right) \ominus_{gH} \tfrac{1}{\lambda} \odot \textbf{L}_{\bar{x}}(\lambda h) = \tfrac{|\lambda|\lVert d \rVert}{\lambda} \odot \textbf{E}(\textbf{F}(\bar{x}); h) \\
\text{or, } &\lim_{\lambda \to 0} \tfrac{1}{\lambda} \odot \left(\textbf{F}(\bar{x} + \lambda h) \ominus_{gH} \textbf{F}(\bar{x})\right)
\ominus_{gH} \textbf{L}_{\bar{x}}(h) = \textbf{0}, ~\text{since}~ \textbf{L}_{\bar{x}}(\lambda h) = \lambda \odot \textbf{L}_{\bar{x}}(h).
\end{align*}\endgroup
Hence,
\begin{equation}\label{edf2}
\lim_{\lambda \to 0} \tfrac{1}{\lambda} \odot \left(\textbf{F}(\bar{x} + \lambda h) \ominus_{gH} \textbf{F}(\bar{x})\right)
=\textbf{L}_{\bar{x}}(h).
\end{equation}
\end{proof}
\section{Proof of Theorem \ref{td1}} \label{aptd1}
\begin{proof}
If $d=(0, 0, \ldots, 0)^T \in \mathbb{R}^n$, both the sides of the equation (\ref{edf3}) become $\textbf{0}$. Hence, the equation (\ref{edf3}) is trivially true for $d=(0, 0, \ldots, 0)^T$.\\

\noindent Let us assume that $d ~\neq~ (0, 0, \ldots, 0)^T$. Since $\textbf{F}$ is $gH$-differentiable at $\bar{x}$, by Lemma \ref{ld1} there exists a $\delta~>~0$ such that
\begin{equation}\label{edf4}
\lim_{\lambda \to 0} \tfrac{1}{\lambda} \odot \left(\textbf{F}(\bar{x} + \lambda h) \ominus_{gH} \textbf{F}(\bar{x})\right)
=\textbf{L}_{\bar{x}}(h),
\end{equation}
where $d = \lambda h$ with $\lambda ~\neq~ 0$, $h \in \mathbb{R}^n$ and $|\lambda|\lVert h \rVert ~<~ \delta$.\\

\noindent Taking $h=e_i$, the $i$-th unit vector in the standard basis of $\mathbb{R}^n$, from the equation (\ref{edf4}) we obtain
\begingroup\allowdisplaybreaks\begin{align*}
&\lim_{\lambda \rightarrow 0} \tfrac{1}{\lambda} \odot \left(\textbf{F}(\bar{x} + \lambda e_i) \ominus_{gH} \textbf{F}(\bar{x})\right)
= \textbf{L}_{\bar{x}}(e_i)\\
\text{or, } & D_i\textbf{F}(\bar{x})= \textbf{L}_{\bar{x}}(e_i).
\end{align*}\endgroup
Therefore, all the $i$-th  partial $gH$-derivative  $D_i \textbf{F}(\bar{x})$ of $\textbf{F}$ at $\bar{x}$ exist. Hence, the gradient of $\textbf{F}$ at $\bar{x}$ exists.\\

\noindent According to Definition \ref{dlivf} of linear IVF, we get
\begingroup\allowdisplaybreaks\begin{align*}
\textbf{L}_{\bar{x}}(d) & = \textbf{L}_{\bar{x}}(d_1, d_2, \ldots, d_n) \\
                    & = d_1 \odot \textbf{L}_{\bar{x}}(e_1) \oplus d_2 \odot \textbf{L}_{\bar{x}}(e_2) \oplus \cdots \oplus d_n \odot \textbf{L}_{\bar{x}}(e_n) \\
                    & = \bigoplus_{i=1}^n d_i \odot D_i\textbf{F}(\bar{x}) = d^T \odot \nabla\textbf{F}(\bar{x}).
\end{align*}\endgroup
\end{proof}
\section{Proof of Theorem \ref{td1a}} \label{aptd1a}
\begin{proof}
Consider an arbitrary $\bar{x}\in \mathcal{X}$. Let for any $d\in \mathcal{N}_\delta(\bar{x})\cap \mathcal{X}$,
\[
    \textbf{F}(\bar{x}+d)=\textbf{F}(\bar{x})\oplus\textbf{F}(d).
\]
Since
\[\nabla\textbf{F}(\bar{x})=\left(\textbf{F}(e_1), \textbf{F}(e_2), \ldots, \textbf{F}(e_n)\right)^T,
\]
for any $d\in \mathcal{N}_\delta(\bar{x})\cap \mathcal{X}$, we obtain
\[
d^T\odot \nabla\textbf{F}(\bar{x})=\bigoplus_{i=1}^n d_i \odot \textbf{F}(e_i)=\textbf{F}(d).
\]
Therefore,
\begingroup\allowdisplaybreaks\begin{align*}
&\lim_{\lVert d \rVert \to 0} \tfrac{1}{\lVert d \rVert} \odot \big(\left(\textbf{F}(\bar{x} + d) \ominus_{gH} \textbf{F}(\bar{x})\right)\ominus_{gH}d^T\odot \nabla\textbf{F}(\bar{x})\big)\\
=&
\lim_{\lVert d \rVert \to 0} \tfrac{1}{\lVert d \rVert} \odot \big(\left(\textbf{F}(\bar{x}) \oplus \textbf{F}(d) \ominus_{gH} \textbf{F}(\bar{x})\right)\ominus_{gH}\textbf{F}(d)\big)\\
=&
\lim_{\lVert d \rVert \to 0} \tfrac{1}{\lVert d \rVert} \odot \big(\left[\underline{f}(\bar{x})+\underline{f}(d)-\underline{f}(\bar{x}), \overline{f}(\bar{x})+\overline{f}(d)-\overline{f}(\bar{x})\right] \ominus_{gH}\textbf{F}(d)\big)\\
=&
\lim_{\lVert d \rVert \to 0} \tfrac{1}{\lVert d \rVert} \odot \left(\textbf{F}(d)\ominus_{gH}\textbf{F}(d)\right)\\
=& \textbf{0}.
\end{align*}\endgroup
Hence, due to Definition \ref{dghd} and Theorem \ref{td1}, the linear IVF $\textbf{F}$ is differentiable at $\bar{x}\in \mathcal{X}$.
\end{proof}
\section{Proof of Theorem \ref{td2}} \label{aptd2}
\begin{proof}
Let the function $\textbf{F}$ be convex on $\mathcal{X}$. Then, for any $x,~y \in \mathcal{X}$ and $\lambda\in (0,1]$, we get
\[
\textbf{F}(x+\lambda (y-x))=\textbf{F}(\lambda y+\lambda'x)~\preceq~ \lambda\odot\textbf{F}(y)\oplus\lambda'\odot\textbf{F}(x),~~\text{where}~~\lambda'=1-\lambda.
\]
Hence,
\begingroup\allowdisplaybreaks\begin{align*}
\textbf{F}(x+\lambda (y-x))\ominus_{gH}\textbf{F}(x)\preceq&(\lambda\odot\textbf{F}(y)\oplus\lambda'\odot\textbf{F}(x))\ominus_{gH}\textbf{F}(x)\\
=& \big[\lambda \underline{f}(y)+\lambda'\underline{f}(x), \lambda \overline{f}(y)+\lambda'\overline{f}(x)]\ominus_{gH} [\underline{f}(x), \overline{f}(x)\big]\\
	=& \big[\min \{\lambda \underline{f}(y)+\lambda'\underline{f}(x)-\underline{f}(x), \lambda \overline{f}(y)+\lambda'\overline{f}(x)-\overline{f}(x)  \},\\
	&~~\max \{\lambda \underline{f}(y)+\lambda'\underline{f}(x)-\underline{f}(x), \lambda \overline{f}(y)+\lambda'\overline{f}(x)-\overline{f}(x)  \}\big]\\
	=&\big[\min\{\lambda\underline{f}(y)-\lambda \underline{f}(x), \lambda\overline{f}(y)-\lambda \overline{f}(x) \}, \\
	&~~\max\{\lambda\underline{f}(y)-\lambda \underline{f}(x), \lambda\overline{f}(y)-\lambda \overline{f}(x) \} \big] \\
	=&\lambda \odot \big[\min \{\underline{f}(y)-\underline{f}(x), \overline{f}(y)-\overline{f}(x)\},\\
	&~~~~~~~~~\max \{\underline{f}(y)-\underline{f}(x), \overline{f}(y)-\overline{f}(x)\}  \big],~~\text{since}~~\lambda~>~0\\
	=&\lambda\odot(\textbf{F}(y)\ominus_{gH}\textbf{F}(x)),
\end{align*}\endgroup
which implies
\[
\frac{1}{\lambda}\odot(\textbf{F}(x+\lambda (y-x))\ominus_{gH}\textbf{F}(x))~\preceq~ \textbf{F}(y)\ominus_{gH}\textbf{F}(x).
\]
Since $\textbf{F}$ is $gH$-differentiable at $x\in\mathcal{X}$, taking $\lambda\to 0+$, by Theorem \ref{td1}, we have
\[
(y-x)^T \odot \nabla\textbf{F}(x)~\preceq~ \textbf{F}(y)\ominus_{gH}\textbf{F}(x)~~\text{for all}~x,~y\in \mathcal{X}.
\]
\end{proof}
\section{Proof of Theorem \ref{td3}} \label{aptd3}
\begin{proof}
Let the function $\textbf{F}$ be convex on $\mathcal{X}$. As $\textbf{F}$ is also $gH$-differentiable on $\mathcal{X}$, by Theorem \ref{td2}, for all $x$, $y\in\mathcal{X}$ we obtain
\begin{equation}\label{edf5}
(y-x)^T \odot \nabla\textbf{F}(x)~\preceq~ \textbf{F}(y)\ominus_{gH}\textbf{F}(x)
\end{equation}
and
\begin{equation}\label{edf6}
(x-y)^T \odot \nabla\textbf{F}(y)~\preceq~ \textbf{F}(x)\ominus_{gH}\textbf{F}(y).
\end{equation}
For a given pair of points $x$, $y\in\mathcal{X}$, let us suppose
\[
\textbf{F}(x)\ominus_{gH}\textbf{F}(y)=[\underline{a}, \overline{a}],~
 (x-y)^T \odot \nabla\textbf{F}(x)=[\underline{b}, \overline{b}]~\text{and}~(x-y)^T \odot \nabla\textbf{F}(y)=[\underline{c}, \overline{c}].
\]
Thus, from (\ref{edf5}) and (\ref{edf6}), respectively, we have
\begingroup\allowdisplaybreaks\begin{align*}
&[-\overline{b}, -\underline{b}]\preceq[-\overline{a}, -\underline{a}]~\text{and}~[\underline{c}, \overline{c}]\preceq[\underline{a}, \overline{a}]\\
~\Longrightarrow~ &[\underline{a}, \overline{a}]\preceq[\underline{b}, \overline{b}]~~\text{and}~~[-\overline{a}, -\underline{a}]\preceq[-\overline{c}, -\underline{c}]\\
~\Longrightarrow~ & \underline{a}~\leq~\underline{b},~~\overline{a}~\leq~\overline{b},~~-\underline{a}~\leq~-\underline{c}~~\text{and}~~-\overline{a}~\leq~-\overline{c}\\
~\Longrightarrow~ & 0~\leq~\underline{b}-\underline{c}~~\text{and}~~0~\leq~\overline{b}-\overline{c}\\
~\Longrightarrow~ &\textbf{0}\preceq[\min\{\underline{b}-\underline{c}, \overline{b}-\overline{c}\}, \max\{\underline{b}-\underline{c}, \overline{b}-\overline{c}\}]\\
~\Longrightarrow~ &\textbf{0}\preceq(x-y)^T \odot \nabla\textbf{F}(x)\ominus_{gH}(x-y)^T \odot \nabla\textbf{F}(y).
\end{align*}\endgroup
Since $x$ and $y$ are arbitrary,
\[
\textbf{0}\preceq(x-y)^T \odot \nabla\textbf{F}(x)\ominus_{gH}(x-y)^T \odot \nabla\textbf{F}(y)~\text{ for all } x,~y\in \mathcal{X}.
\]
\end{proof}

$\\$
\textbf{Acknowledgement}\\ \\
The authors put a sincere thanks to the anonymous reviewers and editors for their valuable comments to enhance the paper. The first author gratefully acknowledges the financial support through the Early Career Research Award (ECR/2015/000467), Science \& Engineering Research Board, Government of India.  \\

\section*{References}


\begin{thebibliography}{99}

\bibitem{ahmad2018} Ahmad, I., Jayswal, A., Al-Homidan, S. and Banerjee, J. (2018), Sufficiency and duality in interval-valued variational programming, \emph{Neural Computing and Applications}, 1--11.

\bibitem{antczak2017} Antczak T. (2017), Optimality conditions and duality results for nonsmooth vector optimization problems with the multiple intervalvalued objective function, \emph{Acta Mathematica Scientia}, 37B(4), 1133--1150.

\bibitem{bao2016} Bao, Y., Zao, B. and Bai, E. (2016), Directional differentiability of interval-valued functions, \emph{Journal of Mathematics and Computer Science} 16(4), 507--515.

\bibitem{bazara1993} Bazaraa, M. S., Sherali, H. D. and Shetty C. M. (2006), \emph{Nonlinear Programming}, John Wiley and Sons, Inc. 3rd Edition.

\bibitem{bede2005generalizations} 
Bede, B. and Gal, S. G. (2005). Generalizations of the differentiability of fuzzy-number-valued functions with applications to fuzzy differential equations, \emph{Fuzzy Sets and Systems}, 151(3), 581--599.

\bibitem{bede2013generalized}
Bede, B. and Stefanini, L. (2013). Generalized differentiability of fuzzy-valued functions. \emph{Fuzzy Sets and Systems}, 230(1), 119--141.


\bibitem{bhurjee2012} Bhurjee, A. K. and Panda, G. (2012), Efficient solution of
    interval optimization problem,
        \emph{Mathematical Methods of Operations Research } 76(3), 273--288.

\bibitem{bhurjee2015} Bhurjee, A. K. and Padhan, S. K. (2016), Optimality conditions and duality results for nondifferentiable interval optimization problems, \emph{Journal of Applied Mathematics and Computing}, 50(1-2) 59--71.

\bibitem{chalco2011} Chalco-Cano, Y., Rom\'{a}n-Flores, H. and Jim\'{e}nez-Gamero, M. D. (2011), Generalized derivative and $\pi$-derivative for set-valued functions, \emph{Information Sciences}, 181(11), 2177--2188.

\bibitem{chalco2013kkt} Chalco-Cano, Y., Lodwick, W. A. and Rufian-Lizana, A. (2013), Optimality conditions of type KKT for optimization problem with interval-valued objective function via generalized derivative, \emph{Fuzzy Optimization and Decision Making}, 12, 305--322.


\bibitem{chalco2013calculus} Chalco-Cano, Y., Rufian-Lizana, A., Roman-Flores, H. and Jimenez-Gamero, M. D. (2013), Calculus for interval-valued functions using generalized Hukuhara derivative and applications, \emph{Fuzzy Sets and Systems}, 219, 49--67.

\bibitem{chalco2019algebra} Chalco-Cano, Y., Maqui-Huam\'{a}n, G. G., Silva, G. N. and Jim\'{e}nez-Gamero, M. D. (2019), Algebra of generalized Hukuhara differentiable interval-valued functions: review and new properties. \emph{Fuzzy Sets and Systems}, 375, 53--69. 

\bibitem{chanas1996} Chanas, S. and Kuchta, D. (1996), Multiobjective programming in optimization of interval objective functions--a generalized approach, \emph{European Journal of Operational Research}, 94(3), 594--598.

\bibitem{chen2004} Chen, S. H., Wu, J. and Chen, Y. D. (2004), Interval optimization for uncertain structures, \emph{Finite Elements in Analysis and Design}, 40, 1379--1398.

\bibitem{chen2004freq} Chen, S. H. and Wu, J. (2004). Interval optimization of dynamic response for uncertain structures with natural frequency constraints, \emph{Engineering Structures}, 26(2), 221--232.


\bibitem{cheng2016} Cheng, J., Liu, Z., Wu, Z., Tang, M. and Tan, J. (2016), Direct optimization of uncertain structures based on degree of interval constraint violation, \emph{Computers and Structures} 164,  83--94.

\bibitem{chinneck2000} Chinneck, J. W. and Ramadan, K. (2000), Linear programming with interval coefficients, \emph{Journal of the Operational Research Society}, 51(2), 209--220.


\bibitem{costa2015} Costa, T. M., Chalco-Cano, Y., Lodwick, W. A. and Silva, G. N. (2015), Generalized interval vector spaces and interval optimization, \emph{Information Sciences}, 311, 74--85.

\bibitem{couso2014statistical} Couso, I. and Dubois, D. (2014). Statistical reasoning with set-valued information: Ontic vs. epistemic views, \emph{International Journal of Approximate Reasoning}, 55(7), 1502--1518.


\bibitem{csendes2001} Csendes, T. (2001), New subinterval selection criteria for interval global optimization, \emph{Journal of Global Optimization}, 19, 307--327.

\bibitem{effati2012} Effati, S. and Pakdaman, M. (2012), Solving the intervalvalued linear fractional programming problem, \emph{American Journal of Computational Mathematics}, 2, 51--55.

\bibitem{ghosh2016newton} Ghosh, D. (2017), Newton method to obtain efficient solutions of
    the optimization problems with interval-valued objective functions, \emph{Journal of Applied Mathematics and Computing}, 53, 709--731.


\bibitem{ghosh2017quasinewton} Ghosh, D. (2017), A quasi-newton method with rank-two update to solve interval optimization problems, \emph{International Journal of Applied and Computational Mathematics} 3(3), 1719--1738.

\bibitem{ghosh2019analytical} Ghosh, D. and Chakraborty, D. (2019), \emph{An Introduction to Analytical Fuzzy Plane Geometry}, Springer International Publishing.


\bibitem{ghosh2017spc} Ghosh, D.,  Ghosh, D., Bhuiya, S. K. and Patra, L. K. (2018), A
    saddle point characterization of efficient solutions for interval optimization
    problems, \emph{Journal of Applied Mathematics and Computing},  58(1--2), 193--217.


\bibitem{ghosh2019extended} Ghosh, D., Singh, A.,  Shukla, K. K. and Manchanda, K. (2019), Extended Karush-Kuhn-Tucker condition for constrained interval optimization problems and its application in support vector machines, \emph{Information Sciences}, 504, 276--292.

\bibitem{ghosh2019derivative} Ghosh, D.,  Chauhan, R. S., Mesiar, R. and Debnath, A. K.  (2020),
    Generalized Hukuhara G\^{a}teaux and Fr\'{e}chet derivatives of
    interval-valued functions and their application in
    optimization with interval-valued functions, \emph{Information Sciences}, 510, 317--340.

\bibitem{ghosh2020ordering} Ghosh, D., Debnath, A. K. and Pedrycz, W. (2020),
    A variable and a fixed ordering of intervals and their application in optimization with interval-valued functions, \emph{International Journal of Approximate Reasoning}, 121, 187--205.


\bibitem{gong2018} Gong, D., Sun, J. and Miao, Z. (2018), A set-based genetic algorithm for interval many-objective optimization problems, \emph{IEEE Transactions on Evolutionary Computation}, 22(1), 47--60.

\bibitem{hong2017} Hong, F. X. and Li, D. F. (2017), Nonlinear programming
    method for interval-valued $n$-person cooperative games, \emph{Operational Research}
    17(2), 479--497.

\bibitem{hukuhara1967} Hukuhara, M. (1967), Int\'{e}gration des applications measurables dont la valeur est un compact convexe, \emph{Funkcialaj Ekvacioj}, 10,
205--223.

\bibitem{ishibuchi1990} Ishibuchi, H. and Tanaka, H. (1990), Multiobjective programming
    in optimization of the interval objective
        function, \emph{European Journal of Operational Research}, 48(2), 219--225.
 
 
 
 
\bibitem{jayswal2011} Jayswal, A., Stancu-Minasian, I. and Ahmed, I. (2011), On sufficient and duality for a class of interval-valued programming problems, \emph{European Journal of Operational Research}, 188(1), 1--13.

\bibitem{jiang2008} Jiang, C., Han, X. and Liu, G. P. (2008), A sequential nonlinear interval number programming method for uncertain structures, \emph{Computer Methods in Applied Mechanics and Engineering} 197, 4250--4265.

\bibitem{jiang2015} Jianga, C., Xiea, H. C., Zhanga, Z. G. and Hana, X. (2015), A new interval optimization method considering tolerance design, \emph{Engineering Optimization}, 47(12), 1637--1650.

\bibitem{karmakar2013} Karmakar, S. and Bhunia, A. K. (2013), A new multi-section based technique for constrained optimization problems with interval-valued objective function, \emph{Applied Mathematics and Computation}, 225, 487--502.


\bibitem{kumar2015} Kumar, P. (2015), Inventory model with price-dependent demand rate and no shortages:  an interval-valued linear fractional programming approach, \emph{Operations Research and Applications: An International Journal}
    2(4), 1--14.


\bibitem{limbourg2005} Limbourg, P. and Aponte, D. E. S. (2005), An optimization algorithm for imprecise multiobjective problem functions, \emph{IEEE Congress on  Evolutionary Computation, Edinburgh, Scotland, UK, Date 2nd--5th Sept, 2005},  1, 459--466.

\bibitem{liu2007} Liu, S. T. and Wang, R. T. (2007), A numerical solution method to interval quadratic programming, \emph{Applied Mathematics and Computation}, 189(2), 1274--1281.

\bibitem{lodwick2018} Lodwick, W. A. and Jamison, K. D. (2018), A constraint fuzzy interval analysis approach to fuzzy optimization, \emph{Information Sciences}, 426, 38--49.

\bibitem{lupulescu2013} Lupulescu, V. (2013), Hukuhara differentiability of interval-valued functions and interval differential equations on time scales, \emph{Information Sciences}, 248, 50--67.

\bibitem{lupulescu2014} Lupulescu, V. (2015), Fractional calculus for interval-valued functions, \emph{Fuzzy Sets and Systems}, 265, 63--85.

\bibitem{markov1979} Markov, S. (1979), Calculus for interval functions of a real variable, \emph{Computing}, 22(4), 325--337.

\bibitem{miettinen1999} Miettinen, K. (1999), \emph{Nonlinear Multiobjective
    Optimization}, Second Edition, Kluwer Academic Publishers, Boston.

\bibitem{moore1966} Moore, R. E. (1966), \emph{Interval Analysis}, Prentice-Hall, Englewood Cliffs, New Jersey.


\bibitem{moore1987} Moore, R. E. (1987), Method and applications of interval analysis, \emph{Society for Industrial and Applied Mathematics}.

\bibitem{osuna2015} Osuna-G\'omez, R., Chalco-Cano, Y., Hern\'andez-Jim\'enez, B. and  Ruiz-Garz\'on, G. (2015), Optimality conditions for generalized differentiable interval-valued functions, \emph{Information Sciences}, 321, 136--146.

\bibitem{osuna2017} Osuna-G\'{o}mez, R., Hern\'{a}ndez-Jim\'{e}nez, B., Chalco-Cano, Y. and Ruiz-Garz\'{o}n, G. (2017), New efficiency conditions for multiobjective interval-valued programming problems, \emph{Information Sciences}, 420, 235--248.


\bibitem{singh2016} Singh, D., Dar, B. A. and Kim, D. S. (2016), KKT optimality conditions in interval valued multiobjective programming with generalized differentiable functions, \emph{European Journal of Operational Research}, 254(1), 29--39.

\bibitem{stefanini2009} Stefanini, L. (2009), A generalization of Hukuhara difference - soft methods for handling variability and imprecision, \emph{Advances in Soft Computing}, 48,  203--210.

\bibitem{stefanini2009gh} Stefanini, L. and Bede, B. (2009), Generalized Hukuhara differentiability of interval-valued functions and interval differential equations, \emph{Nonlinear Analysis: Theory, Methods and Applications}, 71(3--4), 1311--1328.

\bibitem{stefanini2010} Stefanini, L. (2010), A generalization of Hukuhara difference and division for interval and fuzzy arithmetic, \emph{Fuzzy Sets and Systems},
    161, 1564--1584.

 

\bibitem{stefanini2019} Stefanini, L. and Arana-Jim{\'e}nez, M. (2019) Karush–Kuhn–Tucker conditions for interval and fuzzy optimization in several variables under total and directional generalized differentiability, \emph{Fuzzy Sets and Systems}, 362, 1--34.


\bibitem{tao2016} Tao, J. and Zhang, Z. (2016), Properties of interval-valued function space under the $gH$-difference and their application to semi-linear interval differential equations, \emph{Advances in Difference Equations}, 45, 1--28. 

 



\bibitem{wang2015} Wang, H. and Zhang, R. (2015), Optimality conditions and duality for arcwise connected interval optimization problems, \emph{Opsearch}, 52, 870--883.

\bibitem{wolfe2000} Wolfe, M. A. (2000), Interval mathematics, algebraic equations and optimization, \emph{Journal of Computational and Applied Mathematics}, 124, 263--280.

\bibitem{wu2006} Wu, X. Y., Huang, G. H., Liu, L. and Li, J. B. (2006), An interval nonlinear program for the planning of waste management systems with economies-of-scale effects---a case study for the region of Hamilton, Ontario, Canada, \emph{European Journal of Operational Research}, 171(2), 349--372.

\bibitem{wu2007} Wu, H. C. (2007), The Karush-Kuhn-Tucker optimality conditions in an optimization problem with interval-valued objective function, \emph{European Journal of Operational Research},  176, 46--59.

\bibitem{wu2008} Wu, H. C. (2008), On interval-valued nonlinear programming problems, \emph{Journal of Mathematical Analysis and Applications}, 338(1), 299--316.

\bibitem{wu2009} Wu, H. C. (2009), Dulity theory for optimization problems with interval-valued objetive function, \emph{Journal of Optimization Theory and Applications}, 144(3), 615--628.

\bibitem{zhang2014} Zhang, J., Liu, S., Li, L. and Feng, Q. (2014), The KKT optimality conditions in a class of generalized convex optimization problems with an interval-valued objective function, \emph{Optimization Letters}, 8(2), 607--631.

\bibitem{zhang2018} Zhang, Z., Wang, X. and Lu, J. (2018), Multi-objective immune genetic algorithm solving nonlinear intervalvalued programming, \emph{Engineering Applications of Artificial Intelligence}, 67, 235--245.

\bibitem{zhao2010} Zhao, Z., Han, X., Jiang, C. and Zhou, X. (2010), A nonlinear interval-based optimization method with local-densifying approximation technique, \emph{Structural and Multidisciplinary Optimization}, 42(4), 559--573.

\bibitem{zhou2009} Zhou, H. C. and Wang, Y. J. (2009), Optimality condition and mixed duality for interval-valued optimization, \emph{Fuzzy Information and Engineering}, 2, 1315--1323.

\end{thebibliography}
\end{document}